\documentclass[letterpaper,reqno,11pt,oneside]{amsart}

\usepackage{amsmath,amsthm,amsfonts,amssymb}
\usepackage{enumitem,comment}
\usepackage[usenames,dvipsnames]{color}
\usepackage{mathtools}
\usepackage{mathrsfs}
\usepackage[extension=pdf]{hyperref}
\usepackage[totalheight=9.6in,totalwidth=5.95in,centering]{geometry}
\usepackage{graphics,graphicx}
\usepackage[color,notref,notcite,final]{showkeys}
\usepackage[style=alphabetic,natbib=true,maxnames=99,isbn=false,doi=false,url=false,firstinits=true,hyperref=auto,arxiv=abs,backend=bibtex]{biblatex}
\addbibresource{biblio.bib}
\AtEveryBibitem{%
  \clearlist{language}}
\DeclareFieldFormat[article,inbook,incollection,inproceedings,patent,thesis,unpublished]{title}{#1\isdot}
\renewbibmacro{in:}{%
  \ifentrytype{article}{}{%
    \printtext{\bibstring{in}\intitlepunct}}} \setlength{\biblabelsep}{6pt}
\defbibheading{apa}[\refname]{\section*{#1}}
\DeclareFieldFormat{sentencecase}{\MakeSentenceCase{#1}} \renewbibmacro*{title}{%
  \ifthenelse{\iffieldundef{title}\AND\iffieldundef{subtitle}}{}
  {\ifthenelse{\ifentrytype{article}\OR\ifentrytype{inbook}%
      \OR\ifentrytype{incollection}\OR\ifentrytype{inproceedings}%
      \OR\ifentrytype{inreference}} {\printtext[title]{%
        \printfield[sentencecase]{title}%
        \setunit{\subtitlepunct}%
        \printfield[sentencecase]{subtitle}}}%
    {\printtext[title]{%
        \printfield[titlecase]{title}%
        \setunit{\subtitlepunct}%
        \printfield[titlecase]{subtitle}}}%
    \newunit}%
  \printfield{titleaddon}}

% \DeclareMathAlphabet{\pazocal}{OMS}{zplm}{m}{n}
% \newcommand{\La}{\mathcal{L}}
% \newcommand{\Lb}{\pazocal{L}}

% \makeatletter
% \DeclareSymbolFont{mathscrLC}{OT1}{pzc}{m}{n} % Chancery for lowercase
% \DeclareMathSymbol{\mathscr@char@o}{\mathord}{mathscrLC}{`o}
% \makeatother

\definecolor{darkblue}{rgb}{0.13,0.13,0.39}%{0.03,0.03,0.265}
\hypersetup{colorlinks=true,urlcolor=darkblue,citecolor=darkblue,linkcolor=darkblue,%
  pdftitle=How flat is flat in random interface growth?}

\mathtoolsset{showmanualtags,showonlyrefs}

\newtheorem{thm}{Theorem}[section] 
\newtheorem{lem}[thm]{Lemma}
 
\newtheorem{prop}[thm]{Proposition} \newtheorem{cor}[thm]{Corollary}
 
\theoremstyle{definition} \newtheorem{rem}[thm]{Remark} \newtheorem*{rem*}{Remark}
\newtheorem*{ldp}{Large deviation principle}
\newtheorem{defn}[thm]{Definition} \newtheorem{ex}[thm]{Example}
 \newcounter{assum}

\newcommand{\I}{{\rm i}} \newcommand{\pp}{\mathbb{P}} 
  \newcommand{\rr}{\mathbb{R}}
\newcommand{\nn}{\mathbb{N}}  \newcommand{\aip}{\mathcal{A}}
 
\newcommand{\aipt}{\mathcal{A}^t}
\newcommand{\tilaip}{\wt{\mathcal{A}}}

\newcommand{\p}{\partial}
\newcommand{\uno}[1]{\mathbf{1}_{#1}}
\newcommand{\ep}{\varepsilon}

\newcommand{\wt}{\widetilde}

\newcommand{\K}{K_{\Ai}}  
\newcommand{\qand}{\quad\text{and}\quad}
\newcommand{\qqand}{\qquad\text{and}\qquad}

\DeclareMathOperator{\Ai}{Ai}

\newcommand{\ts}{\hspace{0.1em}}
\newcommand{\tts}{\hspace{0.05em}}
\newcommand{\tsm}{\hspace{-0.1em}}

\newcommand{\inv}[1]{\frac{1}{#1}}

\newcommand{\mfH}{\mathfrak{H}}

\newcommand{\hnw}{h^{\rm nw}}
\newcommand{\mfHnw}{\mfH^{\rm nw}}
\newcommand{\scat}{\mathcal{S}}

\def\dash---{\kern.16667em---\penalty\exhyphenpenalty\hskip.16667em\relax}

\numberwithin{equation}{section}

\addtolength{\parskip}{3pt}

\let\oldmarginpar\marginpar
\renewcommand\marginpar[1]{\-\oldmarginpar[\raggedleft\footnotesize #1]%
  {\raggedright{\small\textsf{#1}}}}

\allowdisplaybreaks[2]

\begin{document}

\title{How flat is flat in random interface growth?}

\author{Jeremy Quastel} \address[J.~Quastel]{
  Department of Mathematics\\
  University of Toronto\\
  40 St. George Street\\
  Toronto, Ontario\\
  Canada M5S 2E4} \email{quastel@math.toronto.edu}

\author{Daniel Remenik} \address[D.~Remenik]{
  Departamento de Ingenier\'ia Matem\'atica and Centro de Modelamiento Matem\'atico\\
  Universidad de Chile\\
  Av. Beauchef 851, Torre Norte, Piso 5\\
  Santiago\\
  Chile} \email{dremenik@dim.uchile.cl}

\begin{abstract}
Domains of attraction are identified for the universality classes of one-point asymptotic fluctuations for the Kardar-Parisi-Zhang (KPZ) equation with general initial data.
The criterion is based on a large deviation rate function for the rescaled initial data, which  arises naturally from the Hopf-Cole transformation.
This allows us, in particular, to distinguish the domains of attraction of \emph{curved}, \emph{flat}, and \emph{Brownian} initial data, and to identify the boundary between the curved and flat domains of attraction, which turns out to correspond to square root initial data.

The distribution of the asymptotic one-point fluctuations is characterized by means of a variational formula written in terms of certain limiting processes (arising as subsequential limits of the spatial fluctuations of KPZ equation with narrow wedge initial data, as shown in \cite{corwinHammondKPZ}) which are widely believed to coincide with the Airy$_2$ process.
In order to identify these distributions for general initial data, we extend earlier results on continuum statistics of the Airy$_2$ process to probabilities involving the process on the entire line.
In particular, this allows us to write an explicit Fredholm determinant formula for the case of square root initial data.
\end{abstract}

\maketitle

\section{Introduction and main results}
\label{sec:intro}

\subsection{Motivation}

The last five years have seen the discovery of some special exact solutions of the \emph{Kardar-Parisi-Zhang (KPZ) equation} \cite{kpz},
\begin{equation}\label{eq:KPZ}
    \partial_t h = \tfrac12 \partial_x^2 h - \tfrac12(\partial_x h)^2 + \xi\,.
\end{equation}
Here $\xi$ is space-time Gaussian white noise.
The exact solutions are for very special choices of initial conditions, and
the long exact computations leading to them break down under the smallest perturbations.  In the long time limit,
these fluctuations converge in distribution to laws, many of them coming from random matrices, that are supposed to
be universal throughout the KPZ universality class, but which do depend on the initial condition type.
  In a recent talk by one of 
us, while we were fumbling to prove such a result for the very natural (but again, very special) \emph{flat} initial condition $h_0(x)=0$ \cite{oqr-flat}, J\"urg Fr\"ohlich asked if we couldn't say \emph{anything} about what happens for more general initial conditions.  In this article, we provide some partial answers.

The KPZ equation \eqref{eq:KPZ} models a large class of randomly growing one-dimensional interfaces, directed random polymers and interacting particle systems.
It is expected to arise, in particular, as a scaling limit of a wide range of related systems (for some of which rigorous results are available \cite{berGiaco,acq,akq2,mqrScaling,corwinTsaiHSEP}).
See the reviews \cite{corwinReview,quastelCDM,quastelRem-review,quastelSpohn} for more background on the KPZ equation and its universality class.

Recall that the \emph{Hopf-Cole solution} of \eqref{eq:KPZ} is defined as
\begin{equation}
  \label{eq:hopf-cole}
  h(t,x)=-\log Z(t,x)
\end{equation}
where $Z(t,x)$ solves the \emph{stochastic heat equation (SHE) with multiplicative noise}
\begin{equation}
  \label{eq:SHE}
  \p_tZ=\tfrac12\p_x^2Z-\xi Z.
\end{equation}
It has been understood for almost two decades now \cite{berGiaco} that this is the physically relevant notion of solution of \eqref{eq:KPZ}, which, since solutions are locally Brownian in space,  is ill-posed as written.
Recently several well-posedness results have become available  \cite{hairer,hairerReg, Gubinelli:2015aa} which 
coincide with the Hopf-Cole solutions.
In everything that follows we will take \eqref{eq:hopf-cole} as the \emph{definition} of the solution of \eqref{eq:KPZ}.

The first result on exact solutions of the KPZ equation came in 2010, when a number of independent groups \cite{acq,sasamSpohn,cal-led-rosso,dotsenkoGUE} succeeded in computing the one-point distribution
\begin{equation}\label{eq:defFtx}
F_{t,x}(r) = \pp\!\left(\frac{-h(t,2^{1/3}t^{2/3}x)+\inv{24}t}{2^{-1/3}t^{1/3}}\leq r\right)
\end{equation}
for the special \emph{narrow wedge} (sometimes also referred to as \emph{curved}) initial data given (in terms of \eqref{eq:hopf-cole})\footnote{It is most natural, as in \eqref{nw}, to state initial data for KPZ in terms of that of the stochastic heat equation, then to take log for $t>0$.\label{foot:initSHE}} by
\begin{equation}\label{nw}Z(0,x) =\delta_0(x).\end{equation}
In this case, it is not hard to see that $F_{t,x}$ only depends on $x$ through a parabolic shift.
The exact formula is a complicated integral of Fredholm determinants, but it is simple enough to see from it that (see \cite[Cor. 1.6]{acq})
\begin{equation}\label{curvedass}
\lim_{t\to\infty}F_{t,x}(r) =F_{\rm GUE}\big(r+x^2\big),
\end{equation}
which is also given by a Fredholm determinant.
Here $F_{\rm GUE}$ denotes the Tracy-Widom GUE distribution \cite{tracyWidom}, which was first discovered in the context of random matrices, where it governs the large $N$ asymptotics of the top eigenvalue of an $N\times N$ matrix drawn from the \emph{Gaussian Unitary Ensemble}.
More precisely, if $\mathcal{N}(0,\sigma^2)$ denotes a centered Gaussian random variable with variance $\sigma^2$ and we  let $a_{ij} = \overline{a_{ji}} = \mathcal{N}(0,N/2)+\I\hspace{0.1em}\mathcal{N}(0,N/2)$ for $i<j=1,\ldots N$ and $a_{ii}=\mathcal{N}(0,N)$, with all the Gaussian variables assumed independent, then for the largest eigenvalue $\lambda_N$ of the matrix $(a_{ij})_{i,j=1}^N$ we have
\begin{equation}\label{GUE}
 \pp\!\left(\frac{\lambda_N -2N}{N^{1/3}} \leq r\right) \xrightarrow[N\to \infty]{} F_{\rm GUE}(r).
\end{equation}
Its appearance in random growth models was a surprise, and goes back to the breakthrough papers \cite{baikDeiftJohansson,johansson}.
To a large extent the connection has been explained through the umbrella of Macdonald processes \cite{borCor}.

Another basic initial data for the KPZ equation is \emph{flat}, say $Z(0,x)=1$.
This case is far less understood.
At the present time there is only a conjectural Pfaffian formula \cite{cal-led} derived via Bethe ansatz, divergent series, and a number of uncontrolled approximations (see also \cite{oqr-flat} for partial progress on rigorous results).
From the formula, a formal computation shows that in this case
\begin{equation}\label{flatass}
\lim_{t\to\infty}F_{t,x}(r) = F_{\rm GOE}(4^{1/3}r),
\end{equation}
which is described by a Fredholm Pfaffian (a rigorous proof of this identity, as well as a rigorous formula for $F_{t,x}$, remains a considerable technical challenge).
The Tracy-Widom GOE distribution $F_{\rm GOE}$ \cite{tracyWidom2} which appears in \eqref{flatass} is the analogue of \eqref{GUE} for real symmetric Gaussian matrices, which form the \emph{Gaussian Orthogonal Ensemble} (see e.g. \cite[Sec. 1.2.2]{quastelRem-review} for the explicit definition).
A partial explanation of the connection between GOE and random growth models, which is not nearly as well understood as in the GUE/curved case, is provided in the recent paper \cite{nguyenRemenik}.

Whether or not \eqref{flatass} can be proved, there is little doubt that it is
true.  Moreover, the common wisdom is that \eqref{curvedass} holds whenever the initial
data is \emph{curved} and \eqref{flatass} holds whenever the initial data is \emph{flat}.
The question is, of course: \emph{how curved is curved, and how flat is flat?} 
We were very surprised both that a fairly precise answer is available, and by the answer itself (see Example \ref{1} below). 

More generally, we will consider the problem of characterizing the distribution of the limiting fluctuations of $h(t,x)$ (as in \eqref{curvedass}) for broader families of initial data. In particular, we will be interested in understanding which classes of initial data share the same limiting fluctuations.
The context in which a (partial) answer to these questions is available is a remarkable recent result of
\citet{corwinHammondKPZ}. In fact, some of our results (particularly Theorem \ref{thm:lapl-KPZ}) are to some extent implicitly contained in that article, and our purpose here is to make them explicit.

The distributions of the asymptotic one-point fluctuations in the KPZ class are characterized by means of variational formulas involving the Airy process.   However, one does not know at this time that the narrow wedge solution of the KPZ equation converges to the Airy process.  From
\cite{corwinHammondKPZ} one does have the existence of subsequential limits of the spatial fluctuations of KPZ equation with narrow wedge initial data, which turns out to be sufficient to identify domains of attraction.  Following this, we  assume the variational formula for the  Airy$_2$ process, and extend earlier results on its continuum statistics to the entire line in order to identify these distributions for general initial data.
In particular, this allows us to write an explicit Fredholm determinant formula for the case of square root initial data.

The problem of identifying limiting one-point distributions for general initial data was  considered independently in the context of  geometric last passage percolation/totally asymmetric exclusion process in \cite{corwinLiuWang} and for spatially homogeneous initial data in \cite{chhitaFerrariSpohn} in the exponential case.  In both cases, one does have the convergence to the Airy process, so the general results about convergence of one-point distributions are more complete (see Remark \ref{rem:CLW}).

\subsection{Criterion for the asymptotic fluctuations}
\label{sub:criterion}

In order to understand the dependence on initial data, we extend \eqref{curvedass} to
space.  Define the \emph{crossover Airy process} $\mathcal{A}^t$ in terms of the
\emph{narrow wedge solution} $\hnw$ (i.e. with initial data \eqref{nw}) by
\begin{equation}
  \label{eq:At}
  \mathcal{A}^t(2^{-1/3}x) = \frac{-\hnw(t,t^{2/3}x)+\inv{24}t+\inv{2}t^{1/3}x^2}  % -\inv2\log(2\pi t)}
      {2^{-1/3}t^{1/3}}.
\end{equation}
One of the main conjectures in the field \cite{acq,prolhacSpohn}\footnote{The precise scaling comes from \cite[(1.17)]{prolhacSpohn}.} is that, as $t\to\infty$,
\begin{equation}\label{airylim}
\mathcal{A}^t(x) \longrightarrow \aip(x)
\end{equation}
where $\aip(x)$ is the Airy$_2$ process, which is a \emph{universal}\footnote{By universal here it is meant that the same process appears in the asymptotics of a wide variety of models.
The analogue of \eqref{airylim} is know for several discrete models with determinantal structure, see e.g. \cite{johanssonRMandDetPr}.} process governing the spatial fluctuations of models in the KPZ universality class with curved initial data; see e.g. \cite{quastelCDM,quastelRem-review} for more on this and also \eqref{eq:airy2PrDef} below for the definition of the Airy$_2$ process $\aip$.
What is known at the present time is 

\begin{thm}[{\cite[Thm. 2.14]{corwinHammondKPZ}}]\label{thm:CH}
The laws of the $\mathcal{A}^t(\cdot)$ are tight as probability measures $\mathscr{P}_t$ on $\mathcal{C}(\rr)$ endowed with the topology of uniform convergence on compact sets.
\end{thm}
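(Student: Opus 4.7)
The plan is to combine tightness of the one-point marginals with a uniform spatial modulus of continuity estimate. Since $\mathcal{C}(\rr)$ with the uniform-on-compacts topology is the projective limit of the spaces $\mathcal{C}([-M,M])$, it suffices to prove, for each fixed $M>0$, tightness of the laws of $\aipt\restriction_{[-M,M]}$ in $\mathcal{C}([-M,M])$. One-point tightness at a fixed $x$ is the easy input: the ACQ Fredholm determinant representation of $F_{t,x}$ together with the convergence \eqref{curvedass} to $F_{\rm GUE}(r+x^2)$ implies that the laws of $\aipt(x)$ converge (and so are tight), and in fact the explicit formula yields uniform-in-$t$ upper and lower tail bounds which will also be needed below.

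For the spatial control, the natural device is a line-ensemble extension of $\hnw(t,\cdot)$: one constructs an $\nn$-indexed family of continuous curves $(\mathcal{H}^t_n)_{n\geq 1}$ whose top curve agrees with $\hnw(t,\cdot)$ (up to the deterministic affine shift built into \eqref{eq:At}) and which satisfies a Brownian Gibbs resampling invariance. Concretely, for any $n$, any $a<b$, and any conditioning on the curves $\mathcal{H}^t_k$ with $k\neq n$ together with the boundary values $\mathcal{H}^t_n(a)$ and $\mathcal{H}^t_n(b)$, the restriction of $\mathcal{H}^t_n$ to $(a,b)$ is a Brownian bridge tilted by an explicit Radon-Nikodym factor enforcing non-crossing with the neighbouring curves at scale $t$. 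Such an ensemble can be produced as a diffusive scaling limit of the O'Connell--Yor semi-discrete polymer hierarchy, whose line structure is inherited from the geometric RSK correspondence and the Markov evolution of the associated Whittaker process.

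Granted this property, tightness on $[-M,M]$ reduces to a quantitative comparison with a Brownian bridge. Resample the top curve on a slightly enlarged interval $[-M',M']$; the tilt density is bounded below by a positive constant on an event of high probability uniform in $t$, provided the endpoint values $\aipt(\pm M')$ are controlled and the second curve $\mathcal{H}^t_2$ does not approach $\mathcal{H}^t_1$ too closely on $[-M',M']$. The former is exactly one-point tightness from step one, while the latter follows from a one-point tail estimate for $\mathcal{H}^t_2$ (itself extracted from the Gibbs property and the known law of $\mathcal{H}^t_1$). On this favourable event the standard H\"older modulus of continuity for Brownian bridge transfers to $\aipt$, yielding a uniform-in-$t$ regularity estimate and hence tightness via Arzel\`a--Ascoli.

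The genuine obstacle is the construction of the KPZ line ensemble and the verification of its Brownian Gibbs property. The discrete non-crossing Gibbs structure of the O'Connell--Yor hierarchy has to be transported through a diffusive scaling limit while simultaneously identifying the top line with the narrow wedge KPZ solution; establishing tightness of all lower curves, identifying the limiting interaction kernel, and keeping uniform control on the tilt as $N\to\infty$, is where the serious work lies. Once the ensemble is in hand, the reduction to a Brownian bridge sketched above is comparatively soft.
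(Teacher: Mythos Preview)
The paper does not contain a proof of this theorem: it is quoted as \cite[Thm.~2.14]{corwinHammondKPZ} and used as a black box. Your outline is a faithful high-level summary of the Corwin--Hammond argument itself (KPZ line ensemble via O'Connell--Yor, $\mathbf{H}_t$-Brownian Gibbs property, one-point input from \cite{acq}, and comparison with Brownian bridge to get a uniform modulus), so there is nothing to compare against in the present paper.
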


In what follows we will use the notation $\tilaip$ for the process whose law is given by any subsequential limit $\wt{\mathscr{P}}$ of the family of laws $\big(\mathscr{P}_t\big)_{t>0}$.
While $\tilaip$ is clearly the Airy$_2$ process, we presently lack the tools to rigorously identify it as such (note, however, that by \eqref{curvedass} we do know that $\tilaip$ has the correct Tracy-Widom GUE one-point marginals).

Within the KPZ universality class, the KPZ equation has the special property that it is
exponentially linear in the initial data, i.e. for any reasonable
 non-negative Borel measure $\mu$ on
$\rr$,
\begin{equation}\label{eq:linearSHE}
  h(t,x)=-\log\int_{\rr}e^{-\hnw(t,x-y)}\,d\mu(y)=-\log \int_{\rr}e^{-\frac{(x-y)^2}{2t} -\frac1{24}t+2^{-1/3}t^{1/3}\mathcal{A}^t(2^{-1/3}t^{-2/3}(x-y))}\,d\mu(y)
\end{equation}
is the solution of KPZ with initial data $Z(0,dx) = d\mu(x)$. The measures $\mu$ may be random (but we assume they are independent of the white noise) and need not have finite mass. By $\mu$ being reasonable we will mean that, for some $\kappa>0$,
\begin{equation}\label{eq:muCond}
   \|\mu\|_\kappa:=\sup_{|f(x)|\le e^{-\kappa|x|} } \int\!f\,d\mu \leq M<\infty
 \end{equation}
almost surely for some $M$ (see \cite[Section 2]{mqrScaling}).  We will denote the class of such measures by $\mathscr{M}$.

\begin{ex}\label{bmex}
One of the most natural initial conditions is $d\mu(y) =e^{B(y)} dy$ where $B(y)$ is a  two-sided Brownian motion.
Modulo a (non-trivial) height shift, it is the equilibrium distribution for \eqref{eq:KPZ}.
\end{ex}

We rescale \eqref{eq:linearSHE} to obtain
\begin{equation}\label{eq:At2}
  \frac{-h(t,2^{1/3}t^{2/3}x)+\inv{24}t}{2^{-1/3}t^{1/3}} 
  =\frac1{2^{-1/3}t^{1/3}}  \log \int e^{2^{-1/3}t^{1/3}\big[\mathcal{A}^t(x-y)-(x-y)^2\big] }\, d\mu_t(y)
\end{equation}
where, for Borel sets $\mathcal{B}$, 
\begin{equation}\label{eq:mut}
  \mu_t(\mathcal{B}) = \mu(2^{1/3}t^{2/3} \mathcal{B}).
\end{equation}
The right hand side of \eqref{eq:At2} suggests that, in order to study the limiting fluctuations of $h$ with general initial data, it should be possible to use a random version of Laplace's method.
The hypotheses of such a result are naturally stated in the language of large deviation theory.
In order to state it, we need to introduce some general definitions and notation.

Let $\{\mathsf{Q}_t\}_{t>0}$ denote a family of probability measures on  $\mathscr{M}$, and let $\nu_t$ be a random measure in $\mathscr{M}$ with law $\mathsf{Q}_t$.
For example, in our application to the KPZ equation $\nu_t$ will be taken to be the measure $\mu_t$ defined in \eqref{eq:mut}.
We will assume that  $\{\nu_t\}_{t>0}$ satisfies the following

\begin{ldp}
  There exists a (possibly random) lower semi-continuous function $I\!:\rr
  \longrightarrow (-\infty,\infty]$  (which is not identically $\infty$) and a (deterministic) increasing function $a_t\nearrow \infty$ such that
  \begin{equation}\label{eq:ldp}
    \begin{aligned}
     \limsup_{t\to \infty}a_t^{-1} \log \nu_t(\mathcal{C})  &\le -\inf_{y\in \mathcal{C}} I(y)
     \quad\text{for all closed }\mathcal{C}\subseteq\rr\qquad\text{and}\\
     \liminf_{t\to \infty}a_t^{-1} \log \nu_t(\mathcal{O})  &\ge -\inf_{y\in \mathcal{O}} I(y)
     \quad\text{for all open }\mathcal{O}\subseteq\rr
   \end{aligned}
 \end{equation}
  in distribution. When this holds, we say that the family of measures $\{\nu_t\}_{t>0}$ satisfies a \emph{large deviation principle (in distribution) with rate function $I$ and speed $a_t$} (for simplicity we will usually omit writing \emph{in distribution}). 
\end{ldp}

In this definition, given a sequence of random variables $(X_n)_{n\geq1}$ and another random variable $X$,
we say that $\limsup_{n\to\infty} X_n \le X$ \emph{in distribution} if for all $r\in \rr$,
\begin{equation}
  \limsup_{n\to \infty} \pp( X_n \ge r) \le \pp( X\ge r).
\end{equation}
Note that the probability spaces on which the random variables are defined need have nothing to do with each other, and are not relevant.
As usual, $\liminf X_n = -\limsup (-X_n)$.
We also say $\limsup_{n\to\infty} X_n <\infty$ if $\lim_{r\to\infty}\limsup_{n\to \infty} \pp( X_n \ge r) =0$.

In the context of the KPZ equation, where we will always take $a_t=2^{-1/3}t^{1/3}$, the rate function characterizes the associated domain of attraction of the asymptotic fluctuations of $h(t,x)$, which we define next. This fact is a consequence of our main result, see Corollary \ref{cor:domain} below.

\begin{defn}
 Let $h$ and $\tilde h$ be the solutions of the KPZ equation \eqref{eq:KPZ} with initial conditions for \eqref{eq:SHE} given by $\mu$ and $\tilde\mu$. 
 If, for each $x$,
  \[\lim_{n\to \infty}2^{1/3}{t_n}^{-1/3}\!\left(-h(t_n,2^{1/3}t_n^{2/3}x)+\tfrac1{24}t_n\right)
  ~{\buildrel {\rm (d)} \over =} ~ 
  \lim_{n\to \infty}2^{1/3}t_n^{-1/3}\!\left(-\tilde{h}(t_n,2^{1/3}t_n^{2/3}x)+\tfrac1{24}t_n\right)\]
  for every sequence $t_n\nearrow \infty$, then we will say that \emph{$\mu$ and $\tilde\mu$ belong to the same domain of attraction} $\mathscr{D}\subseteq\mathscr{M}$ for the asymptotic one-point fluctuations of the solution of \eqref{eq:KPZ}. 
\end{defn}

The definition should be thought of as saying that two initial conditions belong to the same domain of attraction if their limiting fluctuations coincide; we choose to relax the definition in order to make it suitable to our setting in which only tightness for the fluctuations is known.

Going back to a general setting, let $\big\{\mathsf{P}_t\}_{t>0}$ be a family of probability measures on $\mathcal{C}(\rr)$, which we endow with the topology of uniform convergence on compact sets, and let $X^t$ be a process with paths in $\mathcal{C}(\rr)$ and law $\mathsf{P}_t$.
Similarly, let $\mathsf{P}$ be another probability measure in $\mathcal{C}(\rr)$ and $X$ be a process with this law.

\begin{lem}[Random Laplace asymptotics]\label{lem:lap}
Let $\big\{\nu_t\big\}_{t>0}$ be a family of random measures in $\mathscr{M}$ with laws
$\{\mathsf{Q}_t\}_{t>0}$.
Suppose that $\big\{\nu_t\big\}_{t>0}$ satisfies a large deviation principle with speed $a_t$ and rate function a (possibly random) lower semi-continuous function $I\!: \rr \longrightarrow (-\infty,\infty]$.
Assume that for any $\ep>0$, there are constants $c>0$, $\delta_1\in(0,1)$ and $M_0>0$ such that
\begin{equation}
\label{qbd}
{\mathsf{Q}}\!\left(I(y)\ge -c-(1-\delta_1)M^2~\forall\,|y|\geq M\right)>1-\ep
\end{equation}
for all $M\geq M_0$, where ${\mathsf{Q}}$ is the law of $I$.
Suppose moreover that $\mathsf{P}_t$ (the law of $X^t$) converges in distribution in $\mathcal{C}(\rr)$ to $\mathsf{P}$ (the law of $X$), and that for some $0<\delta_2<\delta_1$,
\begin{equation}\label{abd}
\limsup_{t\to \infty}\,\sup_{y\in\rr} \{X^t(y)-\delta_2y^2\}<\infty
\end{equation}
in distribution.
Then for all $r$, letting $\mathsf{P}_t\!\otimes\!\mathsf{Q}_t$ and $\mathsf{P}\!\otimes\!\mathsf{Q}$ denote the product
measures,
\begin{multline}
\lim_{t\to\infty} \mathsf{P}_t\!\otimes\!\mathsf{Q}_t\!\left(a_t^{-1} \log \int d\nu_t(y)\,e^{a_t[-(x-y)^2+X^t(x-y)] }\leq r\right)\\
= \mathsf{P}\!\otimes\!\mathsf{Q}\!\left(\sup_{y\in\rr} \left\{ X(x-y) - (x-y)^2 - I(y)\right\}\leq r\right).
\end{multline}
\end{lem}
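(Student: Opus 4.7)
My plan is to prove matching upper and lower bounds on $a_t^{-1}\log\int e^{a_t\phi^t(y)}\,d\nu_t(y)$, where $\phi^t(y) = X^t(x-y) - (x-y)^2$, and show that they converge in distribution to $L := \sup_y\{\phi(y) - I(y)\}$ with $\phi(y) = X(x-y) - (x-y)^2$. This is a random/distributional version of Varadhan's Laplace principle, complicated by three features: the integrand contains the unbounded penalty $-(x-y)^2$; both $X^t$ and the rate function $I$ are random; and the relevant convergences are only in distribution. After invoking the Skorohod representation theorem, I will assume $X^t\to X$ almost surely uniformly on compacts, and similarly convert the LDP for $\{\nu_t\}$ into almost-sure one-sided bounds on the countable family of closed and open intervals actually used below.

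For the upper bound I split at $|y| = M$. On the tail, (abd) gives $X^t(z)\le \delta_2 z^2 + R_t$ with $\limsup R_t<\infty$ in distribution, hence for $M>|x|$ and $|y|\ge M$,
\[
\phi^t(y)\le -(1-\delta_2)(x-y)^2 + R_t \le R_t - (1-\delta_2)(M-|x|)^2.
\]
By (qbd), with probability at least $1-\eta$ one has $I\ge -c-(1-\delta_1)M^2$ on $\{|y|\ge M\}$, so the LDP upper bound applied to the closed half-line $[M,\infty)$ yields $\nu_t([M,\infty))\le \exp\{a_t[c+(1-\delta_1)M^2+o(1)]\}$. Multiplying,
\[
\int_{|y|\ge M} e^{a_t\phi^t}\,d\nu_t \le \exp\!\Bigl\{a_t\bigl[R_t+c+(1-\delta_1)M^2-(1-\delta_2)(M-|x|)^2+o(1)\bigr]\Bigr\}.
\]
Since $\delta_1>\delta_2$, the leading coefficient of $M^2$ in the exponent is $-(\delta_1-\delta_2)<0$, so after fixing $R_t$ (which is tight) and letting $M$ be large, the tail exponent becomes arbitrarily negative with high probability; the symmetric estimate handles $(-\infty,-M]$. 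On the compact part, I partition $[-M,M]$ into finitely many $\delta$-cells $C_i$ and apply the LDP upper bound on each:
\[
\int_{C_i} e^{a_t\phi^t}\,d\nu_t \le e^{a_t\sup_{C_i}\phi^t}\,\nu_t(C_i) \le \exp\!\bigl\{a_t[\sup_{C_i}\phi^t-\inf_{C_i}I+o(1)]\bigr\}.
\]
Taking $a_t^{-1}\log$ of the finite sum gives $\max_i\{\sup_{C_i}\phi^t-\inf_{C_i}I\}+o(1)$; the uniform convergence $\phi^t\to\phi$ on $[x-M,x+M]$, the lower semicontinuity of $I$, and $\delta\to 0$ then drive the right side to $\sup_{[-M,M]}\{\phi-I\}\le L$, so $\limsup a_t^{-1}\log\int \le L$ in distribution.

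For the lower bound I fix $\eta>0$ and a point $y^*$ at which $\phi(y^*)-I(y^*)$ approximates $L$. For small $\rho>0$ and $U=(y^*-\rho,y^*+\rho)$,
\[
\int_U e^{a_t\phi^t}\,d\nu_t \ge e^{a_t\inf_U\phi^t}\,\nu_t(U) \ge \exp\!\bigl\{a_t[\inf_U\phi^t-\inf_U I-\eta+o(1)]\bigr\}
\]
by the LDP lower bound on the open set $U$. Continuity of $\phi$ (from continuity of $X$) yields $\inf_U\phi^t\to\phi(y^*)$ as first $t\to\infty$ then $\rho\downarrow 0$, while $\inf_U I\le I(y^*)$ by definition; hence $\liminf a_t^{-1}\log\int\ge \phi(y^*)-I(y^*)-\eta$ in distribution, and taking a countable dense set of approximate maximizers together with $\eta\downarrow 0$ matches $L$.

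\emph{The main obstacle} is the tail step in the upper bound: one must combine (abd), (qbd), and the LDP upper bound on the same unbounded set, and the strict gap $\delta_2<\delta_1$ is exactly what is required for the $-(x-y)^2$ damping to dominate both the quadratic growth of $X^t$ permitted by (abd) and the quadratic lower bound on $-I$ permitted by (qbd). Without this gap the integral could receive non-negligible contributions from arbitrarily large $|y|$ and the identification with $L$ would break down, which is the structural reason for the two-parameter hypothesis.
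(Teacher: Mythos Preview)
Your proposal is correct and follows essentially the same strategy as the paper: reduce to a compact window by a tail estimate exploiting the gap $\delta_2<\delta_1$, and on the compact window invoke the standard Varadhan--Laplace argument. The paper simply cites the compact case as standard and handles the tail by picking an intermediate $\bar\delta\in(\delta_2,\delta_1)$ so that $X^t(x-y)\le\bar\delta(x-y)^2$ on a high-probability event (absorbing your tight constant $R_t$ into the choice of $M$), then bounds $a_t^{-1}\log\nu_t\big((x-M,x+M)^{\mathrm c}\big)$ via the LDP and \eqref{qbd}; your version with explicit cell partitions, the Skorohod coupling, and the approximate-maximizer lower bound is a more detailed rendering of the same ideas.
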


The assumptions \eqref{qbd} and \eqref{abd} can certainly be weakened, but they will be enough for our purposes.

Now we apply this lemma to the problem we are interested in.
First we need to introduce one last family of measures: given any $0\leq\delta<1$ and measures $\mu_t$ as above, we define measures $\mu^\delta_t$ on $\rr$ by
\begin{equation}\label{eq:deltaResc}
d\mu^\delta_t (x)=  e^{-2^{-1/3}\delta t^{1/3}x^2 }\ts d\mu_t(x).
\end{equation}
The $\delta$ is introduced to provide some tightness on initial data such as in Example \ref{bmex}.

Given the initial data $\mu$ for the KPZ equation, we let %$\mathscr{Q}^\delta_t$, 
$\mathscr{Q}_t$ denote the law of %$\mu^\delta_t$, 
$\mu_t$, defined as in \eqref{eq:mut} and, as in Theorem \ref{thm:CH}, we let $\mathscr{P}_t$ denote the law of $\aip^t$ (as a measure on $\mathcal{C}(\rr)$).
Thus, in view of \eqref{eq:At2}, we may interpret the product probability measure $\mathscr{Q}_t\otimes\mathscr{P}_t$ as the law of $h(t,x)$ started with initial data $\mu_t$.
Note then that the right hand side of \eqref{eq:At2} is set up perfectly for the application of our random version of Laplace's asymptotics.

What follows is the precise statement of our result. 
In simpler, and slightly vague, terms, the result says that if $h(t,x)$ is the solution of the KPZ equation with initial data $Z(0,dx)=\mu(dx)$ such that for some $0\leq\delta<1$ the laws of $\mu^\delta_t$ satisfy a large deviation principle with rate function $\mathfrak{h}_0(x)+\delta x^2$, for some $\mathfrak{h}_0$ which does not decrease too rapidly, then the family of random variables 
\[\left\{2^{1/3}t^{-1/3}\!\left(-h(t,2^{1/3}t^{2/3}x)+\tfrac1{24}t\right)\right\}_{t>0}\]
is tight for every given $x$, and all possible subsequential limits are given, in distribution, by a random variable of the form 
\[\sup_{y\in\rr}\!\big\{ \tilaip(x-y)- (x-y)^2 - \mathfrak{h}_0(y)\big\},\]
with $\tilaip$ the corresponding subsequential limit of $\aipt$.
Of course, if we knew that $\tilaip$ is actually the Airy$_2$ process $\aip$, or in other words that the conjecture \eqref{airylim} holds, then the result could be stated as saying that the above family of random variables converges in distribution to $\sup_{y\in\rr}\!\big\{ \aip(x-y)- (x-y)^2 - \mathfrak{h}_0(y)\big\}$.

\begin{thm}\label{thm:lapl-KPZ}
  Let $\mu\in\mathscr{M}$ satisfy $\mu(\rr)>0$ and consider the solution $h(t,x)$ of the KPZ equation with initial data $Z(0,dx)=\mu(dx)$. Here $\mu$ may be random, but we assume that it is independent of the white noise. Suppose that for some $0\leq \delta<1$ the family of measures $\{\mu^\delta_t\}_{t>0}$ (defined through \eqref{eq:mut} and \eqref{eq:deltaResc}) satisfies the large deviation principle \eqref{eq:ldp} with (random) rate function $\mathfrak h_0(x)+ \delta x^2$ and speed $2^{-1/3}t^{1/3}$.
  Let $\mathscr{Q}$ denote the law of $\mathfrak h_0$, and assume that \eqref{qbd} holds with $I(x)=\mathfrak{h}_0(x)$ and with $\mathscr{Q}$ in place of $\mathsf{Q}$.
  Then for every $x\in\rr$, the family of probability distributions
  \begin{equation}
  \left\{r\in\rr\longmapsto\mathscr{Q}_{t}\!\otimes\!\mathscr{P}_{t}\!\left(\frac{-h(t,2^{1/3}t^{2/3}x)+\inv{24}t}{2^{-1/3}t^{1/3}}\leq r\right)\right\}_{t\geq0}\label{eq:familyProb}
  \end{equation}
  is tight, and any subsequential limit is given by
  \begin{equation}\label{eq:limQP}
    r\in\rr\longmapsto\mathscr{Q}\!\otimes\!\widetilde{\mathscr{P}}\!\left(\sup_{y\in\rr}\left\{ \tilaip(x-y)- (x-y)^2 - \mathfrak{h}_0(y)\right\}\leq r\right),
  \end{equation}
  where $\widetilde{\mathscr{P}}$ is some subsequential limit of $\mathscr{P}_{t_n}$ and $\tilaip$ is the process defined by it.
\end{thm}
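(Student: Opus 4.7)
The plan is to apply Lemma~\ref{lem:lap} (random Laplace asymptotics) directly to the identity \eqref{eq:At2}, working along subsequences since Theorem~\ref{thm:CH} provides only tightness of $\aip^t$ rather than convergence. First I would absorb the Gaussian tilt factor into the process rather than the measure, using $d\mu_t(y) = e^{2^{-1/3}\delta t^{1/3} y^2}\ts d\mu^\delta_t(y)$ to rewrite the right-hand side of \eqref{eq:At2} as
\[
a_t^{-1}\log\int e^{a_t[X^t(x-y) - (x-y)^2]}\ts d\mu^\delta_t(y),
\qquad a_t:=2^{-1/3}t^{1/3},\quad X^t(z):=\aip^t(z) + \delta(x-z)^2,
\]
with $x$ held fixed throughout. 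This puts the expression in exactly the form to which Lemma~\ref{lem:lap} applies, with $\nu_t = \mu^\delta_t$ and rate function $I(y) = \mathfrak{h}_0(y) + \delta y^2$.

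Next, using Theorem~\ref{thm:CH} I would extract a subsequence $t_n\nearrow\infty$ along which $\mathscr{P}_{t_n}$ converges to some $\widetilde{\mathscr{P}}$, with limit process $\tilaip$. Along the same subsequence $X^{t_n}$ converges in law in $\mathcal{C}(\rr)$ to $X(z) := \tilaip(z) + \delta(x-z)^2$, since translating by a fixed continuous parabola is a continuous operation on $\mathcal{C}(\rr)$. The LDP for $\{\mu^\delta_t\}$ is assumed outright, and Condition \eqref{qbd} for $I$ follows from the hypothesized bound on $\mathfrak{h}_0$: since $\delta\geq 0$, for $|y|\geq M$ we have $I(y) \geq \mathfrak{h}_0(y) + \delta M^2 \geq -c - (1-\delta_1-\delta)M^2$, so the effective tail parameter in \eqref{qbd} for $I$ improves to $\delta_1' := \delta_1 + \delta$.

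The key remaining task is to verify the parabolic-decay condition \eqref{abd} for $X^{t_n}$. Expanding,
\[
X^{t_n}(z) - \delta_2 z^2 = \aip^{t_n}(z) - (\delta_2 - \delta)z^2 - 2\delta x z + \delta x^2,
\]
so for $\delta_2\in(\delta,\delta_1')$ the matter reduces to showing that $\sup_{z\in\rr}\{\aip^t(z) - \alpha z^2\}$ is bounded above in distribution uniformly in $t$, for $\alpha := \delta_2 - \delta \in (0,\delta_1)$. Such parabolic decay is morally true --- the narrow-wedge solution is a perturbation of a parabolic mean profile, and for the limiting Airy$_2$ process this statement underlies the very definition of the Tracy-Widom GUE law --- but extracting it rigorously requires quantitative tail and modulus-of-continuity estimates for $\aip^t$ that are uniform in $t$, which I would draw from \cite{corwinHammondKPZ}. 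This is the step I expect to require the most care, since Theorem~\ref{thm:CH} is only a tightness statement on compact sets, whereas \eqref{abd} is a statement about the global upper envelope of $\aip^t$.

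Granting the above, Lemma~\ref{lem:lap} applied along $t_n$ yields convergence in distribution of the left-hand side of \eqref{eq:At2} to
\[
\sup_{y\in\rr}\bigl\{X(x-y) - (x-y)^2 - I(y)\bigr\} = \sup_{y\in\rr}\bigl\{\tilaip(x-y) - (x-y)^2 - \mathfrak{h}_0(y)\bigr\},
\]
where the $\delta y^2$ terms hidden in $X(x-y) = \tilaip(x-y) + \delta y^2$ and in $I(y)$ cancel exactly. Since every subsequence of $\{t\}$ admits a further sub-subsequence along which this argument runs, the family \eqref{eq:familyProb} is tight and every subsequential limit has the claimed form \eqref{eq:limQP}; the hypothesis $\mu(\rr)>0$ prevents the supremum from being $-\infty$, so that the limit is a proper probability distribution.
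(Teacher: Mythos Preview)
Your proposal is correct and follows essentially the same route as the paper: rewrite \eqref{eq:At2} in terms of $\mu^\delta_t$ and the tilted process $X^t(z)=\aip^t(z)+\delta(x-z)^2$, extract a subsequential limit via Theorem~\ref{thm:CH}, verify the hypotheses of Lemma~\ref{lem:lap} (with the same improvement $\delta_1\to\delta_1+\delta$ in \eqref{qbd}), and note the cancellation of the $\delta y^2$ terms in the resulting variational formula. The step you flag as most delicate---the global parabolic upper bound $\sup_z\{\aip^t(z)-\alpha z^2\}<\infty$ in distribution uniformly in $t$---is exactly what the paper isolates, and it is supplied by \cite[Lemma~4.1]{corwinHammondKPZ}, which states that for any $\alpha,\eta>0$ there exists $\beta$ such that $\pp\big(\mfHnw_t(y)+\tfrac12y^2\geq\alpha y^2+\beta\text{ for some }y\big)<\eta$ for all $t\geq1$; translating via $\mfHnw_t(x)=2^{-1/3}\aip^t(2^{1/3}x)-\tfrac12x^2$ gives precisely the needed bound.
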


\begin{ex}\label{ex:inFn}
  As a simple example suppose that, in the setting of the theorem, we take $\mu(dx)=e^{\lambda(x)}dx$ where $\lambda$ is a stochastic process (independent of the white noise) such that, as $\ep\to0$, $\lambda_\ep(x)=\ep^{1/2}\lambda(\ep^{-1}x)$ converges weakly in the topology of uniform convergence on compact sets to some limiting continuous process $\Lambda$ satisfying $|\Lambda(x)|\leq a+b|x|$ almost surely for some $a,b>0$.
  Then the conclusion of the theorem holds with $\mathfrak{h}_0(x)=-\sqrt{2}\Lambda(x)$.
\end{ex}

\begin{rem}\label{rem:sameNoise}
If we construct two solutions of the KPZ equation (with different initial data) on the same space using the same white noise, and we look at the fluctuations along the same sequence $t_n\nearrow \infty$, then the answer \eqref{eq:limQP} for each solution will be given in terms of the same subsequential limit $\tilaip$ of $\aipt$.
This will be useful below when we compare the limits for different initial data.  
\end{rem}

\begin{rem}\label{rem:CLW}
The answers obtained in \cite{corwinLiuWang} and \cite{chhitaFerrariSpohn} for the asymptotic limiting fluctuations in the setting of last passage percolation and the totally asymmetric exclusion process are given in terms of the same variational problems.
In their case the limiting distributions are written directly in terms of the Airy$_2$ process (as opposed to $\tilaip$), since in their setting the analog of Theorem \ref{thm:CH} (see e.g. \cite{johansson}) gives convergence to $\aip$ instead of only tightness.
\end{rem}

Theorem \ref{thm:lapl-KPZ} and Remark \ref{rem:sameNoise} imply directly that the rate function appearing in the large deviation principle satisfied by the initial data for the KPZ equation characterizes its domain of attraction:

\begin{cor}\label{cor:domain}
  Let $\mu$ and $\tilde \mu$ be two choices of initial data for the KPZ equation (given in terms of the SHE) satisfying the hypotheses appearing in Theorem \ref{thm:lapl-KPZ} for some $0\leq\delta<1$. 
  Assume that the rate functions in the large deviation principle satisfied by the families of rescaled measures $\{\mu^\delta_t\}_{t>0}$ and $\{\tilde\mu^\delta_t\}_{t>0}$ are equal in distribution.
  Then $\mu$ and $\tilde\mu$ belong to the same domain of attraction $\mathscr{D}\subseteq\mathscr{M}$ for the asymptotic fluctuations of solutions of the KPZ equation.
\end{cor}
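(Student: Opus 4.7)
The plan is to chain together Theorem \ref{thm:lapl-KPZ} and Remark \ref{rem:sameNoise}, working along a common subsequence. Fix $x\in\rr$ and an arbitrary sequence $t_n\nearrow\infty$. By Theorem \ref{thm:CH}, the laws $\mathscr{P}_{t_n}$ of $\aipt$ are tight in $\mathcal{C}(\rr)$, so I would extract a subsequence $t_{n_k}$ along which $\mathscr{P}_{t_{n_k}}$ converges to some $\widetilde{\mathscr{P}}$ with associated process $\tilaip$. Passing to a further subsequence if needed (using the tightness asserted by Theorem \ref{thm:lapl-KPZ}), I may simultaneously assume that the one-point distributions in \eqref{eq:familyProb} associated with both $\mu$ and $\tilde\mu$ converge along $t_{n_k}$.

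Now I would apply Theorem \ref{thm:lapl-KPZ} twice \dash--- to $\mu$ with rate function $\mathfrak{h}_0$ and to $\tilde\mu$ with rate function $\tilde{\mathfrak{h}}_0$ \dash--- identifying the two subsequential limits respectively as
\[\mathscr{Q}\otimes\widetilde{\mathscr{P}}\bigl(\sup_{y\in\rr}\bigl\{\tilaip(x-y)-(x-y)^2-\mathfrak{h}_0(y)\bigr\}\le r\bigr)\]
and the analogous formula with $\tilde{\mathfrak{h}}_0$ in place of $\mathfrak{h}_0$. The crucial point, which is exactly Remark \ref{rem:sameNoise}, is that one can realize both KPZ solutions on the same probability space driven by the same white noise $\xi$; since $\aipt$ is a functional of the narrow wedge solution $\hnw$ only, the subsequential limit $\tilaip$ is common to the two formulas above. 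Combined with the hypothesis that $\mathfrak{h}_0$ and $\tilde{\mathfrak{h}}_0$ are equal in distribution, and that both are independent of $\xi$ (hence of $\tilaip$) since the initial data is assumed independent of the noise, the two product measures assign identical laws to the variational supremum.

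To conclude, I would note that this argument shows every subsequence of $t_n$ admits a further subsequence along which the one-point distributions for $\mu$ and $\tilde\mu$ converge to the \emph{same} limiting law. In particular, if the limit exists along $t_n$ for $\mu$ it also exists for $\tilde\mu$ (and vice versa), with equality in distribution; this is precisely what is required by the definition of belonging to the same domain of attraction. I do not foresee any real obstacle here beyond bookkeeping: the content of the corollary is that the subsequential limit depends on $\mu$ only through the \emph{law} of its associated rate function, a fact that Theorem \ref{thm:lapl-KPZ} has already packaged into the variational formula. The only subtlety worth flagging is the insistence on using a single $\tilaip$ (hence the passage to a joint subsequence and the invocation of Remark \ref{rem:sameNoise}), since otherwise one might worry that the two formulas involve different subsequential Airy-type processes.
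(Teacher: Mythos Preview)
Your proposal is correct and follows exactly the route the paper indicates: the paper simply states that the corollary is a direct consequence of Theorem~\ref{thm:lapl-KPZ} together with Remark~\ref{rem:sameNoise}, and you have spelled out precisely that implication, including the joint-subsequence bookkeeping needed to ensure a common $\tilaip$.
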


This gives us a criterion to distinguish the three basic classes: curved, flat and stationary.  We have already described how
$F_{\rm GUE}$ is supposed to govern the asymptotic fluctuations for curved initial conditions and $F_{\rm GOE}$ for flat (for the latter case, see \cite{borFerPrahSasam,bfp} for the derivation starting from periodic TASEP).
For stationary initial data (corresponding, in the context of the KPZ equation, to the initial condition $h(0,x)=B(x)$, a two-sided Brownian motion), the fluctuations are given by 
 $F^\alpha_{\rm stat}$; see \cite{baikRainsF0,ferrariSpohnStat}, which obtain this distribution in the setting of stationary TASEP.
$F^\alpha_{\rm stat}$ is usually referred to as the \emph{Baik-Rains distribution}, especially in the case $\alpha=0$ (in \cite{baikRainsF0} and many other papers $F^0_{\rm stat}$ is denoted as $F_0$).

The way the Tracy-Widom GOE and the Baik-Rains distributions show up in our context is through the variational formulas 
\begin{align}
  F_{\rm GOE}(4^{1/3}r)&=\pp\!\left(\,\sup_{x\in\rr}\big\{\aip(x)-x^2\big\}\leq r\right)\label{eq:GOE},\\
  F^\alpha_{\rm stat}(r)&=\pp\!\left(\,\sup_{x\in\rr}\big\{\aip(x)-(x+\alpha)^2+\sqrt{2}B(x)\big\}\leq r\right)\label{eq:stat}.
\end{align}
The identity \eqref{eq:GOE} was discovered by \citet{johansson}; an alternative proof was given in \cite{cqr} which is intimately related to the general formula we introduce below in Section \ref{sec:hitting}.
The identity \eqref{eq:stat} was proved in \cite{chhitaFerrariSpohn}\footnote{A version of this statement, which does not quite imply \eqref{eq:stat}, was proved earlier in \cite{corwinLiuWang} in the setting of geometric last passage percolation; see the discussion following Corollary 2.8 in that paper.\label{foot:clw}}.
It is known \cite{borCorFerrVeto} to be the limiting distribution of   $t^{-1/3}( -h(t,0)+\frac{t}{24})$ corresponding to the  initial condition $h(0,x)=B(x)$, which is stationary in the sense that $\partial_xh(t,x)$ becomes a stationary in time distribution-valued process with this initial data.

Focusing back on Theorem \ref{thm:lapl-KPZ}, if $\mathfrak{h}_0(0)=0$ and $\mathfrak{h}_0(x)=\infty$ for $x\neq 0$, which corresponds to the narrow wedge initial condition $Z(0,\cdot)=\delta_0$ for the SHE, the right hand side is given simply by the one
dimensional marginal of $\tilaip$, which by \eqref{curvedass} is know to be given by the Tracy-Widom GUE distribution.
On the other hand, in the flat case $\mathfrak{h}_0(y)\equiv 0$ the right hand side would be known to be given by the Tracy-Widom GOE distribution if we knew that $\tilaip$ is indeed the Airy$_2$ process (thanks to \eqref{eq:GOE}).
However, we do know that  $\tilaip(x)$ is locally Brownian \cite{corwinHammondKPZ} and from this it is not hard to see that we have the strict stochastic ordering
 \begin{equation}\label{eq:orderGOEGUE}
\pp\!\left(\sup_{y\in\rr}\left\{ \tilaip(x-y)- (x-y)^2 \right\}\leq r\right)
 > \pp\!\left(\tilaip(0)\leq r\right)=F_{\rm GUE}(r).
\end{equation}

\begin{cor}\label{thm:main}
Recall the setting of Theorem \ref{thm:lapl-KPZ}: we consider the solution $h(t,x)$ of the KPZ equation with initial data $Z(0,dx)=\mu(dx)$, where the (possibly random, but independent of the white noise) measure $\mu\in\mathscr{M}$ satisfies $\mu(\rr)>0$ and is such that for some $\delta\in[0,1)$ the associated family of measures $\{\mu^\delta_t\}_{t>0}$ (defined through \eqref{eq:mut} and \eqref{eq:deltaResc}) satisfies the large deviation principle \eqref{eq:ldp} with (random) rate function $\mathfrak h_0(x)+ \delta x^2$ and speed $2^{-1/3}t^{1/3}$.
Let
\[F^\mu_{t,x}=\pp_\mu\!\left(\frac{-h(t,2^{1/3}t^{2/3}x)+\inv{24}t}{2^{-1/3}t^{1/3}}\leq r\right)\]
(where the subscript in $\pp_\mu$ indicates the initial condition specified above).
Then we have:
\begin{enumerate}[label=\normalfont\arabic*.,itemsep=3pt]
  \item \emph{(Curved)} If $\mathfrak h_0(0)=0$ and $\mathfrak{h}_0(x)=\infty$ for $x\neq 0$, then for all $x\in\rr$,
  \begin{equation} \label{eq:limCurved}
      \lim_{t\to\infty}F^\mu_{t,x}(r)=F_{\rm GUE}(r+x^2).
  \end{equation}
    In particular, the conclusion holds if $\mu$ is  a non-trivial positive measure satisfying 
    \begin{equation}\label{eq:muCurved}
      \mu(\{x\!:|x|>r\})\leq c\ts e^{-\kappa r^{1/2+\delta}}
    \end{equation}
  almost surely, for some $\delta>0$, $\kappa>0$, $c>0$, and all $r>r_0$.     
  \item \emph{(Flat)}  If $\mathfrak h_0\equiv 0$, then the $F^\mu_{t,x}$ are tight, and any subsequential limit is stochastically strictly larger than $F_{\rm GUE}$.
  In particular, the conclusion holds if for some $\ep>0$ the measure $\mu$ satisfies 
      \begin{equation}
          \mu((-r,r))\le c_2\ts e^{\kappa_2 r^{1/2-\delta_2}}\qand     \mu((m-\ep,m+\ep))\ge c_1\tts\ts e^{-\kappa_1 |m|^{1/2-\delta_1}}\label{eq:muFlat}  
      \end{equation}
  for some $\delta_1,\delta_2\in(0,1/2)$, $\kappa_1,\kappa_2>0$, $0<c_1$, $c_2<\infty$, all $r>0$, and all $m\in\rr$ with $|m|$ sufficiently large.
  \item \emph{(Stationary)} If $\mathfrak h_0(x)=\sqrt{2}B(x)$ with $B(x)$ a two-sided Brownian motion, then
  \begin{equation} \label{eq:limStat}
        \lim_{t\to\infty}F^\mu_{t,x}(r)=F^x_{\rm stat}(r),
  \end{equation}
  where $F^x_{\rm stat}$ is the Baik-Rains distribution \eqref{eq:stat}.
  In particular, the conclusion holds if $\mu(dx)=e^{\Theta(x)}dx$ and $\Theta(x)$ is any stochastic process which is independent of the white noise and is such that $\Theta_\ep(x)=\ep^{1/2}\Theta(\ep^{-1}x)$ converges weakly, as $\ep\to0$, in the topology of uniform convergence on compact sets, to a standard Brownian motion $B(x)$.
\end{enumerate}
\end{cor}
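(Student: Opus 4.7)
The plan is to apply Theorem~\ref{thm:lapl-KPZ} in each of the three regimes, identifying the rate function $\mathfrak{h}_0$ and simplifying the variational formula~\eqref{eq:limQP}; the verifications that the explicit tail and anti-concentration hypotheses on $\mu$ produce the required large deviation principle are Laplace-type estimates done case by case at the end.

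For the curved regime, with $\mathfrak{h}_0(0)=0$ and $\mathfrak{h}_0\equiv+\infty$ elsewhere, the supremum in~\eqref{eq:limQP} collapses to the single term $\tilaip(x)-x^2$, whose one-point marginal is Tracy--Widom GUE by~\eqref{curvedass}, so the limit~\eqref{eq:limCurved} is subsequence-independent. To verify this rate function from~\eqref{eq:muCurved} we take $\delta=0$ in the LDP setup: the rescaling $\mu_t(\mathcal{B})=\mu(2^{1/3}t^{2/3}\mathcal{B})$ combined with the stretched-exponential tail yields $\mu_t(\{|y|\geq s\})\leq c\exp(-\kappa'\, t^{2\delta/3}s^{1/2+\delta})$, which divided by $a_t=2^{-1/3}t^{1/3}$ tends to $-\infty$ at every $s>0$, while any fixed open neighborhood of $0$ accumulates the mass $\mu(\rr)>0$ in the limit.

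For the flat regime, $\mathfrak{h}_0\equiv0$ and the substitution $z=x-y$ collapses~\eqref{eq:limQP} to $\sup_z\{\tilaip(z)-z^2\}$, which is independent of $x$; tightness is part of Theorem~\ref{thm:lapl-KPZ} and the strict stochastic ordering against $F_{\rm GUE}$ is~\eqref{eq:orderGOEGUE}. To derive the LDP from~\eqref{eq:muFlat} we must take some $\delta\in(0,1)$, because the upper bound on $\mu((-r,r))$ only precludes super-stretched-exponential growth and so the Gaussian factor $e^{-2^{-1/3}\delta t^{1/3}x^2}$ in~\eqref{eq:deltaResc} is needed for finiteness; combining the upper estimate in~\eqref{eq:muFlat} with this Gaussian weight gives $a_t^{-1}\log\mu^\delta_t(\{|y|\geq s\})\leq -\delta s^2+o(1)$, matching the closed-set bound for the rate $\delta y^2$, while the lower estimate applied at $m=2^{1/3}t^{2/3}y_0$ yields $a_t^{-1}\log\mu^\delta_t((y_0-\ep,y_0+\ep))\geq -\delta y_0^2+o(1)$, matching the open-set bound (condition~\eqref{qbd} is trivial since $\mathfrak{h}_0\equiv 0$).

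For the stationary regime, Example~\ref{ex:inFn} applied with $\lambda=\Theta$ identifies the rate function as $-\sqrt{2}B$, which equals $\sqrt{2}B$ in distribution by symmetry of Brownian motion. The variational expression becomes $\sup_y\{\tilaip(x-y)-(x-y)^2-\sqrt{2}B(y)\}$, which cannot be evaluated intrinsically without knowing $\tilaip=\aip$. The decisive step is to invoke the result of~\cite{borCorFerrVeto}: for the concrete Brownian initial data $\mu(dx)=e^{B(x)}dx$, the one-point KPZ distribution converges without subsequences to $F^x_{\rm stat}$. Combined with Theorem~\ref{thm:lapl-KPZ} this forces \emph{every} subsequential variational limit for that initial data to coincide with $F^x_{\rm stat}$, giving~\eqref{eq:limStat} in this case; for more general $\Theta$ satisfying the Brownian scaling assumption, Corollary~\ref{cor:domain} applies since all such $\Theta$ produce the same rate function $\sqrt{2}B$ in distribution and therefore lie in the same domain of attraction. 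The main obstacle is precisely this stationary case, where an intrinsic evaluation of the variational formula seems out of reach and one must import the answer from the external one-point calculation of~\cite{borCorFerrVeto}.
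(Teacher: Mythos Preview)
Your proposal is correct and follows essentially the same strategy as the paper: apply Theorem~\ref{thm:lapl-KPZ} in each regime, identify the variational limit, and then verify the large deviation principle from the explicit tail/anti-concentration hypotheses on $\mu$ (with $\delta=0$ in the curved case, $\delta>0$ in the flat case, and the external input~\cite{borCorFerrVeto} plus Corollary~\ref{cor:domain} in the stationary case). The only cosmetic difference is that in the curved case you evaluate the collapsed variational formula directly via~\eqref{curvedass}, whereas the paper routes through Corollary~\ref{cor:domain} and the narrow-wedge convergence of~\cite{acq}; since~\eqref{curvedass} is exactly that result, the two arguments rest on the same input.
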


We know of course, as we mentioned already, that the limiting distribution in the flat case is $F_{\rm GOE}$, but we cannot identify it rigorously. 
The statement about the stochastic ordering in this case is just \eqref{eq:orderGOEGUE}.

The asymptotics for the stationary case (which can be easily seen to correspond to starting the SHE with $Z(0,x)=e^{-B(x)}$), on the other hand, follow from a combination of Theorem \ref{thm:lapl-KPZ}, Corollary \ref{cor:domain} and the fact, proved in \cite{borCorFerrVeto}, that the one-point asymptotic fluctuations of stationary KPZ equation coincide with those encountered for the stationary TASEP.

We consider next several examples. The first two discuss the distinction between curved and flat in more detail.

\begin{ex}[\bf Curved vs. flat] \label{1}
Let us spell out what the criterion in the first two points of the above corollary means in the case of initial data for the stochastic heat equation taking the natural form:
\begin{equation}
Z(0,x) = e^{ -\kappa |x|^{\alpha} + V(x) }
\end{equation}
where $\alpha >0$ and $V(x)$ is a function with 
$|x|^{-\alpha}|V(x)|\longrightarrow0$ as $x\to\infty$.  \emph{We find that the fluctuations are given
  by $F_{\rm GUE}$ if $\alpha>1/2$ and $\kappa>0$ and by something strictly stochastically larger (conjecturally $F_{\rm GOE}$) if $\alpha<1/2$ and $\kappa\in \rr$.}

\noindent From a more physical point of view, if an interface (with the sign of the non-linearity chosen as in
\eqref{eq:KPZ}) is set up so that asymptotically it has the shape $-\kappa
|x|^{1/2+\delta}$ with $\delta,\kappa>0$, then it is \emph{curved} in the sense that the
fluctuations will have GUE Tracy-Widom law; if it is asymptotically $\kappa
|x|^{1/2-\delta}$ with $\kappa\in\rr$, $\delta>0$, it is \emph{flat} in the sense that the fluctuations  should have GOE
Tracy-Widom law.  This result is not so intuitive and it would be nice to see it verified
experimentally, along the lines of \cite{takeuchiSano1}.
\end{ex}

\begin{ex}[\bf Boundary between curved and flat]\label{3}  
In the critical case where the initial condition for the KPZ equation is asymptotically $-\kappa|x|^{1/2}$, $\kappa>0$, (for instance, if $Z(0,x)=e^{ -\kappa |x|^{1/2} + V(x) }$ with $|x|^{-1/2}|V(x)|\longrightarrow0$ as $x\to\infty$) we identify a new family of crossover distributions, given as follows:
\begin{equation}
F_{\rm sqrt}^{\alpha,b_1, b_2}(r) = \pp\!\left( \sup_{x\in\rr}\!\left\{\aip(x)-x^2 -b_1|x-\alpha|^{1/2}\uno{x<\alpha}-b_2|x-\alpha|^{1/2}\uno{x\geq \alpha})\right\} \le r\right).\label{eq:sqrt-prob2}
\end{equation}
To be more precise, if we start the KPZ equation with the specified initial conditions, then all that Theorem \ref{thm:main} tells us is that the family of probability distributions in \eqref{eq:familyProb} is tight, and all limit points are of the above form with $\aip$ replaced by some subsequential limit $\tilaip$ of the rescaled processes $\aip^t$.
However, as it is widely believed that $\aip^t$ actually converges to $\aip$, it is clear (if still conjectural) that the correct limiting fluctuations are those given in \eqref{eq:sqrt-prob2}, and it is for this reason that we introduce the distributions $F_{\rm sqrt}^{\alpha,b_1, b_2}$ in this way.

\noindent When restricted to $b_1,b_2\in[0,\infty)$ these distributions cross over between $F_{\rm GUE}(r)$ and $F_{\rm GOE}(4^{1/3}r)$:
\begin{equation}\label{eq:sqrtInterp}
	\begin{gathered}
		F_{\rm sqrt}^{\alpha,b_1, b_2}(r) \searrow F_{\rm GOE}(4^{1/3}r) \quad{\rm as}\quad b_1,b_2\searrow 0,\\
		F_{\rm sqrt}^{\alpha,b_1, b_2}(r)\nearrow F_{\rm GUE}(r)\quad{\rm as}\quad b_1,b_2\nearrow \infty.
	\end{gathered}
\end{equation}
It turns out that this new self-similar class also has explicit one-point marginals given by  Fredholm determinants, which we include below as \eqref{eq:sqroot}. 
\end{ex}

\begin{ex}[\bf Crossover cases] One also has crossover families 
\begin{align}
  F^\alpha_{\rm curved\to flat}(r)&=\pp\!\left(\,\sup_{x\leq0}\big\{\aip(x)-(x+\alpha)^2\big\}  \leq r-\alpha^2\uno{\alpha<0}\right)\label{eq:2to1},\\
F^\alpha_{\rm curved\to BM}(r)&=\pp\!\left(\,\sup_{x\leq0}\big\{\aip(x)-(x+\alpha)^2+\sqrt{2}B(x)\big\}\leq r-\alpha^2\right)\label{eq:2tostat},\\
  F^\alpha_{\rm flat\to BM}(r)&=\pp\!\left(\,\sup_{x\in\rr}\big\{\aip(x)-(x+\alpha)^2+\sqrt{2}B(x)\uno{x\leq0}\big\}\leq r-\alpha^2\uno{\alpha<0}\right)\label{eq:1tostat},
\end{align}
which arise from putting two different classes of initial data on each side of the origin.
For example, $F^\alpha_{\rm curved\to flat}$ describes the asymptotic fluctuations for half-flat (or half-periodic) TASEP \cite{bfs}.
It is expected to characterize the KPZ domain of attraction associated to $\mathfrak{h}_0(x) = 0$ for $x\geq0$ and $\mathfrak{h}_0(x) = \infty$ for $x<0$, corresponding to the initial condition $Z(0,x)=\uno{x\geq0}$ for the SHE, but this remains an open problem (see \cite{leDoussalHF} for a non-rigourous argument and \cite{oqr-half-flat} for partial progress on the rigorous side).
The identity \eqref{eq:2to1} was proved in \cite{qr-airy1to2}; the other two can be obtained by a simple extension of the arguments of \cite{chhitaFerrariSpohn} (versions of them also appeared in \cite{corwinLiuWang}, see in footnote \ref{foot:clw} in page \pageref{foot:clw}).
$F^\alpha_{\rm curved\to flat}(r)$ interpolates between $F_{\rm GUE}(r)$ as $\alpha\to- \infty$ and $F_{\rm GOE}(4^{1/3}r)$ as $\alpha\to\infty$.
\end{ex}

\begin{ex}[\bf Beyond the large deviation scale]\label{ex:beyLD} Theorem \ref{thm:lapl-KPZ} is certainly not the last word on fluctuation classes, as one can still have non-trivial phenomena when $\mathfrak{h}_0(x)=-\infty$.  We are basically thinking of $h_0(x) =-f(x)$ where $f$ is some function satisfying $t^{-1/3}f(t^{2/3} x) \to \infty$ in some region.   Already we get something very interesting if $f(x) =x^{\beta}\uno{x>0}$, $\beta\in[1/2,2)$.  Note the obvious requirement $\beta<2$ since we are taking initial data
$Z_0(x) = e^{x^{\beta}\uno{x>0}}$ for the stochastic heat equation.
The right hand side of 
\eqref{eq:linearSHE} at $x=0$ gives in this case
\begin{equation}\label{eq:linearSHE2}
-\log \int_{\rr}e^{-\frac{y^2}{2t} -\frac1{24}t+2^{-1/3}t^{1/3}\mathcal{A}^t(2^{-1/3}t^{-2/3}y) + y^{\beta}\uno{y>0}}\,dy.
\end{equation}
$y^{\beta}-\frac{y^2}{2t}$ is maximized at $y^* = (\beta t)^{ \frac1{2-\beta}}$, around which it looks like 
$c_\beta t^{\frac{\beta}{2-\beta}} -\tfrac1{2t} (2-\beta)(y-y^*)^2$ where $c_\beta=\beta^{\frac{\beta}{2-\beta}} -\tfrac12\beta^{\frac{2}{2-\beta}}$.   
Using the stationarity of $\aip^t$, \eqref{eq:linearSHE2} becomes, up to quadratic order 
\begin{equation}
\tfrac1{24}t-c_\beta t^{\frac{\beta}{2-\beta}}-\log \int_{\rr}e^{-\frac1{2t} (2-\beta)(y-y^*)^2 +2^{-1/3}t^{1/3}\mathcal{A}^t(2^{-1/3}t^{-2/3}(y-y^*)) }\,dy.
\end{equation}
Note that we shifted $\aip^t$, so the approximation is only in distribution.  Note also that if $\beta<1/2$, the shift is within the range of the old optimization problem, so it does nothing. Now we may apply the earlier argument and find that, along subsequences $t_k\nearrow\infty$, the distribution is approximately that of
\begin{equation}
\tfrac1{24}t_k-c_\beta t_k^{\frac{\beta}{2-\beta}}+2^{-1/3} t_k^{1/3} \sup_{x\in\rr}\left\{ \tilaip(x) - (2-\beta)x^2 \right\}.
\end{equation}

\noindent Let us pretend for simplicity in the coming discussion that $\tilaip$ is just $\aip$.
When $\beta=1$ we are in the flat case and the limiting fluctuations are $F_{\rm GOE}$.
Otherwise we identify a new family of distributions depending on the parameter $\beta\in (1/2, 2]$:
\begin{equation}
 F^\beta_{\rm parbl}(r) = \pp\!\left(\sup_x\!\left\{ \aip(x) - (2-\beta)x^2 \right\}\le r\right)
\end{equation} 
(note that the comment following \eqref{eq:sqrt-prob2} also applies here).
Again we have a Fredholm determinant formula, which is presented in Example \ref{ex:parabola}.

\noindent If we start instead with a symmetric version $f(x) =|x|^{\beta}$ ($\beta>1/2$), we end up with two equal maxima at $\pm y*$ and the final formula becomes a maximum over two independent Airy processes.  Note that this does not work in the case $\beta=1/2$ because the maxima are not well separated in that case.  Hence we obtain the same height shift
$\frac1{24}t-c_\beta t^{\frac{\beta}{2-\beta}}$ on top of which we have fluctuations of size $2^{-1/3} t^{1/3}$ with
distribution $(F^\beta_{\rm parbl})^2$.  The case $\beta=1$ corresponds to shock initial data $e^{|x|}$ and the result is the maximum of two independent GOE's.
\end{ex}

\begin{rem}[\bf Ordering] \label{thm:orderings}  From the variational formulas, the following inequalities hold.  For every $\alpha,r\in\rr$ and $b_1>\tilde b_1\geq0$ and $b_2>\tilde b_2\geq0$,
  \begin{gather}    
      F_{\rm GOE}(4^{1/3}r)<F^\alpha_{\rm curved\to flat}(r)<F_{\rm GUE}(r),\\
      \shortintertext{and}
      F_{\rm GOE}(4^{1/3}r)<F^{\alpha,\tilde b_1,\tilde b_2}_{{\rm sqrt}}(r)<F^{\alpha,b_1,b_2}_{{\rm sqrt}}(r)<F_{\rm GUE}(r)
  \end{gather}
  (with similar inequalities for the parabolic case).
  Moreover, if $\tilaip$ is any fixed subsequential limit of $\aipt$ and we replace $\aip$ by $\tilaip$ in the definitions \eqref{eq:GOE}--\ts\tts\eqref{eq:sqrt-prob2}, then the same inequalities hold.  The inequalities are strict because $\tilaip$ is locally Brownian.
  Note the slightly confusing fact that, in general, if we have the stochastic ordering $X\preccurlyeq Y$ as random variables, then their distribution functions satisfy the reverse ordering: 
  $F_X(r) = \pp( X\le r) \ge \pp(Y\le r) = F_Y(r)$. 
  Therefore the above inequalities mean that one has couplings in which the associated random variables are stochastically ordered with respect to one another in the opposite direction.
  
  \noindent In general, if $\mu$ and $\tilde\mu$ are initial data for $Z$, and the family of measures $\mu_t$ and $\tilde\mu_t$ satisfy large deviation principles with rate functions $\mathfrak{h}_0(x)<\mathfrak{\tilde h}_0(x)$ almost surely,
  then $\mu+\tilde\mu$ lies in the same domain of attraction $\mathscr{D}$ as  $\mu$.
 \end{rem}

 \begin{ex}[\bf Compact support]\label{2} 
  Initial data for the KPZ equation \eqref{eq:KPZ} of the form $h_0(x) =  f(x)$ where $\int\!|f|<\infty$ and $f$ has compact support is in the \emph{flat} domain of attraction. 
 \end{ex} 
 
A very natural conjecture is that $F^0_{\rm stat}(r)<F_{\rm GOE}(4^{1/3} r)$, since it would seem that the Brownian 
initial data would swamp the flat initial data.  We were very surprised to find out that this is false. 
This can be easily seen from the tails of these distributions \cite{baikBuckDiF}:

\begin{center}
\begin{tabular}{|c|c|c|}
\hline
$x\searrow -\infty$ & & $x\nearrow \infty$\\
\hline\hline
\mbox{}&&\\[-10pt]
$e^{-\frac1{12}|x|^{3}} $& $F_{\rm GUE} (x) $& $1- e^{-\frac43 x^{3/2}}$ \\[2pt] 
\hline
\mbox{}&&\\[-10pt]
$e^{-\frac16|x|^{3}} $ & $F_{\rm GOE} (4^{1/3} x)$ & $1- e^{-\frac43 x^{3/2}}$ \\[2pt]  
\hline
\mbox{}&&\\[-10pt]
$e^{-\frac1{12}|x|^{3}}$ & $F_{\rm stat}^0 (x)$ & $1- e^{-\frac23 x^{3/2}} $\\[2pt]  
\hline
\end{tabular}
\end{center}

\noindent Along the way, we were fooled into thinking the conjecture is true because we discovered the inequality
\begin{equation}\label{eq:GOEBR}
  F^0_{\rm stat}(r)<F_{\rm GOE}(r).
\end{equation}
However, the meaning of the inequality is now very unclear since it is the distribution function $F_{\rm GOE}(4^{1/3}r)$ which arises in random growth from flat interfaces with our choice of scaling, and \emph{not} $F_{\rm GOE}(r)$.
On the other hand, it is hard to believe that the inequality is meaningless, and it would be very interesting to have some explanation.
Numerically it appears that   $F^0_{\rm stat}(\kappa \tts r)<F_{\rm GOE}(r)$ for any $1\le \kappa\le 2^{1/3}$. For $\kappa\not\in[ 1,2^{1/3}]$ it is easy to check it is not true from the tails.
The inequality \eqref{eq:GOEBR} is proved in Appendix \ref{sec:BRbeatsTW}, based on the representations of both functions in terms of Painlev\'e II transcedents.

The proof of \eqref{eq:GOEBR} also yields the following interesting inequality:
\begin{equation}\label{eq:extraOrd}
  F_{\rm GOE}(r)^4<F_{\rm stat}^0(r)F_{\rm GUE}(r)
\end{equation}
for all $r$.
Noting that
$F_{\rm GOE}(r)^2=\pp\!\left(\,\sup_{x\leq0}\big\{\aip(x)-x^2+\sqrt{2}B(x)\big\}\leq r\right)$
(which follows from a simple extension of the result of \cite{chhitaFerrariSpohn} and known results for half-stationary last passage percolation \cite{baikRainsF0}; see again \cite{corwinLiuWang} for a related identity), this says that
\[\pp\!\left(\,\sup_{x\leq0}\big\{\aip(x)-x^2+\sqrt{2}B(x)\big\}\leq r\right)^2<F_{\rm stat}^0(r)F_{\rm GUE}(r).\]
Apart from providing its proof, Appendix \ref{sec:BRbeatsTW} also discusses the inequality \eqref{eq:extraOrd} in the context of last passage percolation.

\subsection{Airy$_2$ hitting probabilities}\label{sub:airyHit}

Our formulas for $F_{\rm sqrt}^{\alpha,b_1,b_2}$ and $F_{\rm parbl}^\beta$ will follow from an extension of the main result of \cite{cqr}, which is of independent interest, and is one of the main results of this paper.
We are interested in computing $\pp(\aip(t)-t^2\leq g(t)~\forall\,t\in\rr)$ for a broad class of functions $g$.
In view of Theorem \ref{thm:main} and the (conjectural) fact that $\tilaip$ is just the Airy$_2$ process, these probabilities describe the laws associated to the different domains of attraction for the one-point fluctuations of the solution of the KPZ equation.
In particular, they give the one-point distribution of the KPZ fixed point, the conjectural universal limit for all models in the KPZ universality class (see \cite{cqrFixedPt}).

In order to state the general formula we need to introduce some operators.
Define the \emph{Airy kernel} $\K$ as
\begin{equation}
  \label{eq:KAi}
  \K(x,y)=\int_{-\infty}^0 d\lambda\Ai(x-\lambda)\!\Ai(y-\lambda),
\end{equation}
where $\Ai(\cdot)$ is the Airy function. We regard $\K$ as the kernel of an integral
operator, also denoted by $\K$, acting on $L^2(\rr)$.
Define also the \emph{Airy transform} $A$, which is an operator acting on $L^2(\rr)$ with kernel
\begin{equation}
  \label{eq:B0}
  A(x,\lambda)=\Ai(x-\lambda).
\end{equation}
The Airy transform satisfies $AA^*=I$, so that $f(x)=\int_{-\infty}^\infty d\lambda\Ai(x-\lambda)Af(\lambda)$.
In other words, the shifted Airy functions $\{\Ai(x-\lambda)\}_{\lambda\in\rr}$ (which are not in $L^2(\rr)$) form a generalized orthonormal basis of $L^2(\rr)$ (and thus $\K$ is the projection onto the subspace spanned by $\{\Ai(x-\lambda)\}_{\lambda\leq0}$).
It is instructive to think of $A$ as an analogue of the Fourier transform, mapping functions in real space to (Airy) frequency space.

Let $P_m$ denote the projection onto the interval $(m,\infty)$ and $\bar P_m=I-P_m$ 
\begin{equation}
  \label{eq:Pm}
  P_mf(x)=f(x)\uno{x>m}\qqand\bar P_mf(x)=f(x)\uno{x\leq m}.
\end{equation}
With this notation it is clear that 
\begin{equation}\label{eq:K-AiryTr}
 	\K=A\bar P_0A^*.
\end{equation} 
The Airy kernel plays a prominent role in the Fredholm determinant formulas for the Tracy-Widom GUE and
GOE distributions.
In fact, we have
\begin{equation}\label{eq:FGUE}
	F_{\rm GUE}(r)=\det\!\big(I-P_r\K P_r\big)=\det\!\big(I-\K P_r\K\big)
\end{equation}
and
\begin{equation}\label{eq:FGOE}
	F_{\rm GOE}(4^{1/3}r)=\det\!\big(I-\K\varrho_r\K\big)
\end{equation}
where $\varrho_r$ is the reflection operator
\begin{equation}\label{eq:varrho}
	\varrho_rf(x)=f(2r-x).
\end{equation}
The GUE formula is well-known, with the second equality following from the cyclic property of the determinant (and the fact that $\K$ and $P_r$ are projections).
The GOE formula is essentially the one derived by \cite{sasamoto} (and proved in \cite{ferrariSpohn}), see \cite[(1.8)]{cqr}.
The Fredholm determinants here, and in everything that follows, are being computed on the Hilbert space $L^2(\rr)$.
For the definition of the Fredholm determinant and some background (including the definition of the Hilbert-Schmidt and trace class norms to be used below and in Section \ref{sec:hitting}) we refer to \cite[Sec. 2]{quastelRem-review}.

Note how both $F_{\rm GUE}$ and $F_{\rm GOE}$ can be computed as Fredholm determinants of an operator of the form $I-\K R \K$ for some explicit operator $R$.
As we will see next, the same is true for more general probabilities of the form $\pp\big(\aip(t)\leq g(t)+t^2~\forall\,t\in\rr)$.

For given $g\!:[0,\infty)\longrightarrow\rr\cup\{\infty\}$ consider the hitting time
\begin{equation}\label{eq:defTaug}
  \tau_{g}=\inf\{t\geq 0\!:B(t)\geq g(t)\},
\end{equation}
where $B$ is a Brownian motion with diffusion coefficient 2 (this hitting time could, of course, be infinite).
Also, if $g\!:\rr\longrightarrow\rr\cup\{\infty\}$, we define $g^+,g^-\!:[0,\infty)\longrightarrow\rr\cup\{\infty\}$ through
\begin{equation}\label{eq:gplusminus}
	g^+(t)=g(t)\qand g^-(t)=g(-t)\qquad\text{for all $t\geq0$}.
\end{equation}
We will assume that there is a finite or infinite collection of intervals of the form $[a,b]$ with $a\leq b$ such that $g\in H^1([a,b])$ (i.e. $g$ is differentiable in $[a,b]$ and both it and its derivative are in $L^2([a,b])$) and $g(x)=\infty$ for $x$ outside the collection of intervals (note that we could, for instance, take $g$ to be infinite except at some finite collection of points).
We will denote this class of functions as $H^1_{\rm ext}(\rr)$.
  
Using these notations we introduce a new operator as follows:

\begin{defn}\label{defn:brScatt}
  For $g\in H^1_{\rm ext}(\rr)$ such that $g(t)\geq c-\kappa t^2$ for some $\kappa\in[0,3/4)$, we define the kernel
  \begin{equation}\label{eq:defPsig}
  \begin{split}
      \Psi^g(\lambda,y)&=A^*(\lambda,y)-\int_{0}^{\infty}\,\pp_y(\tau_{g}\in dt)e^{-t\Delta}A^*(\lambda,g(t))\\
      &=\Ai(y-\lambda)-\int_{0}^{\infty}\,\pp_y(\tau_{g}\in dt)e^{-2t^3/3-(g(t)-\lambda)t}\Ai(g(t)-\lambda+t^2)
  \end{split}
  \end{equation}
  and then the \emph{Brownian scattering operator} $\scat^{g}$ as
  \begin{equation}\label{eq:defScatt}
    \scat^g=\Psi^{g^-}\bar P_{g(0)}\ts(\Psi^{g^+})^*,
  \end{equation}
  or, in other words,  
  \begin{equation}\label{eq:defScatgAlt}
  	\scat^g(\lambda_1,\lambda_2)=\int_{(-\infty,g(0))}dy\,\Psi^{g^-}(\lambda_1,y)\Psi^{g^+}(\lambda_2,y).
  \end{equation}
  More generally, given any $\alpha\in\rr$, if we let $g_\alpha(t)=g(t+\alpha)$ and define
  \begin{equation}
  \label{eq:defPsigAlpha}
    \begin{split}
      &\Psi^g_\alpha(\lambda,y)=e^{-\alpha \Delta}A^*(\lambda,y)-\int_{0}^{\infty}\pp_y(\tau_{g_\alpha}\in
        dt)A^*e^{-(\alpha+t)\Delta}(\lambda,g_\alpha(t))\\
      &\qquad=e^{-2\alpha^3/3-\alpha(y-\lambda)}\Ai(y-\lambda+\alpha^2)\\
      &\qquad\qquad-\int_{0}^{\infty}\pp_y(\tau_{g_\alpha}\in dt)e^{-2(\alpha+t)^3/3-(\alpha+t)(g_\alpha(t)-\lambda)}\Ai(g_\alpha(t)-\lambda+(\alpha+t)^2),
    \end{split}
    \end{equation}
  then $\scat^g$ can also be written explicitly as
  \begin{equation}\label{eq:defScatgAlpha}
  \scat^g(\lambda_1,\lambda_2)=\Psi^{g^-_\alpha}_{-\alpha}\ts\bar P_{g(\alpha)}\ts(\Psi^{g^+_\alpha}_\alpha)^*(\lambda_1,\lambda_2)=\int_{(-\infty,g(\alpha))}dy\,\Psi^{g^-_\alpha}(\lambda_1,y)\Psi^{g^+_\alpha}(\lambda_2,y).
  \end{equation}
  If $g$ is not continuous at $\alpha$, then $g(\alpha)$ in the above formulas should be interpreted as $\min\{g(\alpha-),g(\alpha+)\}$.
\end{defn}

The fact that the right hand side of \eqref{eq:defScatgAlpha} does not depend on $\alpha$ will be proved in Section \ref{sec:hitting} (see Proposition \ref{prop:alphaIndep}).

Although the inverse heat kernels appearing in the integrand in the definition of $\Psi^g_\alpha$ may in principle appear to be nonsensical, they are in fact well-defined in this setting, because $A^*e^{s\Delta}$ is well-defined (and in $L^2(\rr)$) for all $s\in\rr$.
See Proposition \ref{prop:mLaplacian} below, which also justifies the second equality in \eqref{eq:defPsig} and in \eqref{eq:defPsigAlpha}.

We remark that, under our assumptions on $g$, $\K-A\bar P_0\scat^{g}\bar P_0A^*$ is a trace class operator mapping $L^2(\rr)$ to itself (this is proved as Proposition \ref{prop:OmegaInfty-TrCl}).
This implies that the Fredholm determinant in the next theorem is well-defined.

\begin{rem}\label{rem:barPg0}
In \eqref{eq:defScatgAlt}/\eqref{eq:defScatgAlpha} we have written the domain of integration in $y$ as $(-\infty,g(\alpha))$ to stress that $g(\alpha)$ is not included in the integration (even though $\bar P_{g(\alpha)}$ is the projection onto $(-\infty,g(\alpha)]$, which could lead to confusion).
This is so because $\Psi^{g^\pm_\alpha}(\lambda,g(\alpha))=0$.
In fact, if $y=g(\alpha)$ then $\pp_y(\tau_{g^\pm_\alpha}\in dt)$ is, by definition of $\tau_g$, just a point mass at $t=0$, so that
\[\int_{0}^{\infty}\,\pp_{g(\alpha)}(\tau_{g^\pm_\alpha}\in dt)e^{-(\alpha+t)\Delta}A^*(\lambda,g_\alpha(t))=e^{-\alpha \Delta}A^*(\lambda,g(\alpha)).\]
Note that $e^{-(\alpha+t)\Delta}A^*(\lambda,g(t))$ vanishes rapidly as $t\to\infty$, so $\tau_{g^\pm_\alpha}$ possibly being $\infty$ does not affect the integral.
\end{rem}

At this point we are ready to state our general continuum statistics formula for the Airy$_2$ process.

\begin{thm}\label{thm:hittProb}
  Given $g\in H^1_{\rm ext}(\rr)$ such that $g(t)\geq c-\kappa t^2$ for some $\kappa\in[0,3/4)$, we have
  \begin{equation}\label{eq:hittProb}
    \pp\!\left(\aip(t)\leq g(t)+t^2~\forall\,t\in\rr\right)=\det\!\Big(I-\K+A\bar P_0\scat^{g}\bar P_0A^*\Big)
  \end{equation}
  with $\scat^g$ given as in Definition \ref{defn:brScatt}.
\end{thm}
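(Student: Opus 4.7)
My plan is to approximate the full-line hitting problem by finite-interval problems, for which an explicit Fredholm determinant formula is available (the main result of \cite{cqr}), and then pass to the limit $L\to\infty$. For each $L>0$, let $g_L\in H^1_{\rm ext}(\rr)$ be defined by $g_L(t)=g(t)$ for $|t|\leq L$ and $g_L(t)=\infty$ for $|t|>L$. The event $\{\aip(t)\leq g_L(t)+t^2~\forall\,t\in\rr\}$ coincides with $\{\aip(t)\leq g(t)+t^2~\forall\,t\in[-L,L]\}$, and these events decrease to $\{\aip(t)\leq g(t)+t^2~\forall\,t\in\rr\}$ as $L\nearrow\infty$, so the left-hand side of \eqref{eq:hittProb} is the monotone limit of the finite-interval probabilities.

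For each fixed $L$, I would apply the \cite{cqr} formula on the compact interval $[-L,L]$ and identify the resulting Fredholm determinant with $\det(I-\K+A\bar P_0\scat^{g_L}\bar P_0A^*)$. Using the $\alpha$-independent representation \eqref{eq:defScatgAlpha} with $\alpha=0$, the integrals defining $\Psi^{g_L^\pm}$ automatically reduce to Brownian hitting integrals on $[0,L]$, since a path that has not hit $g^\pm$ by time $L$ never hits $g_L^\pm$. The relation \eqref{eq:K-AiryTr} between the Airy kernel and the projection $A\bar P_0A^*$ should then intertwine the forward/backward hitting-time operators of \cite{cqr} with the objects $\Psi^{g_L^\pm}$ appearing here; this identification is essentially a rewriting and should be routine using the machinery of Section \ref{sec:hitting}.

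The heart of the argument is to show that
\begin{equation*}
A\bar P_0\scat^{g_L}\bar P_0A^*\longrightarrow A\bar P_0\scat^g\bar P_0A^*
\end{equation*}
in trace norm as $L\to\infty$, so that continuity of the Fredholm determinant on trace-class operators applies. By the factorization $\scat^g=\Psi^{g^-}\bar P_{g(0)}(\Psi^{g^+})^*$, this reduces to Hilbert--Schmidt convergence of the individual kernels $\Psi^{g_L^\pm}\to\Psi^{g^\pm}$ after pre- and post-composition with $A$, $\bar P_0$, and $\bar P_{g(0)}$. The difference
\begin{equation*}
\Psi^{g^\pm}(\lambda,y)-\Psi^{g_L^\pm}(\lambda,y)=\int_L^\infty\pp_y(\tau_{g^\pm}\in dt)\,e^{-2t^3/3-(g^\pm(t)-\lambda)t}\Ai(g^\pm(t)-\lambda+t^2)
\end{equation*}
must be estimated uniformly in the relevant variables.

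The principal obstacle is the sharpness of the hypothesis $\kappa<3/4$ in the quadratic bound $g(t)\geq c-\kappa t^2$. Heuristically, for $g(t)\sim-\kappa t^2$ the integrand above combines the Gaussian density of Brownian hitting times, the explicit factor $e^{-2t^3/3-g(t)t}$, and the super-exponential decay of $\Ai$ at the large positive argument $(1-\kappa)t^2$, yielding a net rate $\exp(-C_\kappa t^3)$ which is integrable in $t$ precisely in the stated range; beyond $\kappa=3/4$, $g(t)+t^2$ can become too negative and the Airy asymptotic switches from decay to oscillation, at which point the kernel bounds degenerate. Pushing these pointwise bounds through the Hilbert--Schmidt/trace-norm framework underlying Proposition \ref{prop:OmegaInfty-TrCl}, uniformly in $L$, is where I expect the bulk of the technical effort to concentrate.
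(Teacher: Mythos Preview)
Your proposal is correct and follows the same route as the paper: the finite-interval \cite{cqr} formula is rewritten as $\det(I-\K+A\bar P_0\scat^{g_L}\bar P_0A^*)$ via \eqref{eq:wBCHed}--\eqref{eq:rewr-wtTheta2} (your $\scat^{g_L}$ is exactly the truncated operator appearing there, since $\tau_{g_L^\pm}=\tau_{g^\pm}$ on $\{\tau_{g^\pm}\le L\}$ and is infinite otherwise), and the trace-norm convergence as $L\to\infty$ is precisely Theorem \ref{thm:limL}, proved with the Hilbert--Schmidt machinery of Proposition \ref{prop:OmegaInfty-TrCl}. The only refinement to flag is that $A\bar P_0\scat^{g}\bar P_0A^*$ is not trace class on its own---only $\K-A\bar P_0\scat^{g}\bar P_0A^*$ is---so the convergence and the factorization into HS pieces must be carried out after subtracting $\K$ (see \eqref{eq:KOmInfty}), and the individual factors only become Hilbert--Schmidt after the exponential-weight conjugation $e^{\pm\beta\xi}$ used in that proposition.
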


A series of remarks are in order.

\makeatletter
\newcommand{\mylabel}[2]{\def\@currentlabel{#2}\label{#1}}
\makeatother

\begin{rem}\label{rem:long}
\mbox{}\\[-16pt]
\begin{enumerate}[label=\arabic{*}.,itemsep=3pt]

  \item The requirement that $\kappa<3/4$ in the above theorem is purely technical, and we expect the result to hold for all $\kappa<1$.
	Nevertheless, since $\kappa<3/4$ is enough for the main examples we have in mind in this paper, we will not attempt to improve this.

  \item The importance of the formula with respect to the earlier one in \cite{cqr} is that the earlier formula was restricted to a finite interval.
  In general, the distributions arising in the long time limit have the form  $\sup_t\left\{ \aip(t) - t^2 -g(t) \right\}$, so the earlier formula required initial data $g(\cdot)$ which was infinite outside a finite interval.
  The distributions were obtained by taking a limit of large intervals, but these limits turn out to be rather singular and have to be performed on a case by case basis.

  \item The analogy between \eqref{eq:hittProb} and \eqref{eq:FGUE}/\eqref{eq:FGOE} can be made more explicit by writing 
  \begin{gather}
    \textstyle\Psi^g=A^*\check{\Psi}^g\qqand\check{\scat}^g=\check{\Psi}^{g^-}\bar P_{g(0)}\ts(\check{\Psi}^{g^+})^*\\
    \text{with}\qquad\check\Psi^g(x,y)=\delta_0(x-y)-\int_{0}^{\infty}\pp_y(\tau_{g}\in dt)e^{-t\Delta}(x,g(t)),
  \end{gather}
  so that in view of \eqref{eq:defScatt} we have
  \begin{equation}\label{eq:hittProbAlt}
    \pp\!\left(\aip(t)\leq
        g(t)+t^2~\forall\,t\in\rr\right)=\det\!\Big(I-\K(I-\check{\scat}^{g})\K\Big)
  \end{equation}
  (this formula could be made rigorous by defining properly the domain and range of the operators $\check{\Psi}^g$, but we opt not to dwell on this technicality since it does not really provide any better understanding of \eqref{eq:hittProb}).
  \vskip2pt
  \noindent As the examples that follow this remark show, one can use \eqref{eq:hittProb}/\eqref{eq:hittProbAlt} to recover the Tracy-Widom GUE and GOE distributions in the curved and flat cases, respectively.

  \item The version of the formula for $\scat^g$ given in \eqref{eq:defScatgAlpha} is useful whenever $g$ is naturally split at some $\alpha\in\rr$, see e.g. Examples \ref{ex:2to1fromGral} and \ref{ex:sqrtBd}.
  In the second of these two examples it is convenient to use a slightly modified version of \eqref{eq:hittProb}, which is obtained by conjugating the operator inside the determinant by $e^{\alpha H}$ with $H$ as defined in \eqref{eq:airyHam} (in \eqref{eq:hittProbAlpha2} below we also shift the kernel by $\alpha^2$).
  It reads as follows: if
  \begin{equation}\label{eq:tildePsi}
  	\begin{aligned}
  		&\wt\Psi^g_\alpha(\lambda,y)=A^*e^{-\alpha\xi}(\lambda,y)-\int_{0}^{\infty}\pp_y(\tau_{g_\alpha}\in
  	  dt)A^*e^{-\alpha\xi}e^{-t\Delta}(\lambda,g(t))\\
  		&\hspace{0.3in}=e^{-\alpha y}\tsm\Ai(y-\lambda)-\int_{0}^{\infty}\pp_y(\tau_{g_\alpha}\in dt)e^{-2t^3/3-2\alpha t^2-\alpha^2t-(\alpha+t)g_\alpha(t)+\lambda t}\\
  		&\hspace{2.8in}\times \Ai(g_\alpha(t)-\lambda+t^2-2\alpha t),
    \end{aligned}
  \end{equation}
  $\wt\scat^g_\alpha=\wt\Psi^{g^-_\alpha}_{-\alpha}\ts\bar P_{g(\alpha)}\ts(\wt\Psi^{g^+_\alpha}_\alpha)^*$ and $\K^\alpha=A\bar P_{-\alpha^2}A^*$, then
  \begin{equation}\label{eq:hittProbAlpha2}
    \pp\!\left(\aip(t)\leq g(t)+t^2~\forall\,t\in\rr\right)=\det\!\Big(I-\K^\alpha+A\bar P_{-\alpha^2}\wt\scat^{g}_\alpha\bar P_{-\alpha^2}A^*\Big).
  \end{equation}

\end{enumerate}
\end{rem}

Theorem \ref{thm:hittProb}  gives  explicit Fredholm determinant formulas for $\pp(\aip(t)\leq g(t)+t^2~~\forall\,t)$ whenever the hitting time density of a Brownian motion from any point $y<g(\alpha)$ to the curve $g$ is explicit.
We consider next several choices of $g$ for which this is possible.
While in the first three examples the theorem is used to recover known results, the last two examples provide new formulas.

\begin{ex}\label{ex:GUEfromGral} {\bf (Curved/GUE)}
  \mbox{~~}Consider the case $g(0)=r$ and $g(t)=\infty$ for all $t\neq0$ (which corresponds to computing the marginal of $\aip(t)$ at time $t=0$).
  Note that the hitting densities appearing in \eqref{eq:defPsig} are zero in this case for all $t\geq0$ and $y<g(0)=r$, and so from \eqref{eq:defScatt}/\eqref{eq:defScatgAlt} we trivially get $\Psi^{g^\pm}=A^*$ and thus $\scat^{g}=A^*\bar P_rA$, so that \eqref{eq:hittProb} becomes 
  \[\pp(\aip(0)\leq r)=\det\!\Big(I-\K+A\bar P_0A^*\bar P_rA\bar P_0A^*\Big)=\det\!\Big(I-\K P_r\K\Big)=F_{\rm GUE}(r)\]
  (where we used \eqref{eq:K-AiryTr}) as desired.
\end{ex}

\begin{ex}\label{ex:GOEfromGral} {\bf (Flat/GOE)}
  \mbox{~~}The flat case, corresponding to $g\equiv r$, is a bit more involved, and relies on showing that in this case $\Psi^{g^+}=\Psi^{g^-}=A^*(I-\varrho_r)$, where $\varrho_r$ is the reflection operator $\varrho_rf(x)=f(2r-x)$.
  This implies that $\scat^g=A^*(I-\varrho_r)\bar P_r(I-\varrho_r)A=A^*(I-\varrho_r)A$.
  Using this and \eqref{eq:K-AiryTr} again, \eqref{eq:hittProb} becomes
  \begin{equation}\label{eq:GOEfromGral}
    \begin{aligned}
    \pp(\aip(t)\leq r+t^2~~\forall t\in\rr )&=\det\!\Big(I-\K+A\bar P_0A^*(I-\varrho_r)A\bar P_0 A^*\Big)\\
    &=\det\!\Big(I-\K\varrho_r\K\Big)=F_{\rm GOE}(4^{1/3}r)
  \end{aligned}
  \end{equation}
  as desired.   
  The proof of the formula for $\Psi^{g^\pm}$ will be provided at the end of Section \ref{sec:hitting} (see \eqref{eq:flatPsi}).
\end{ex}

\begin{ex}\label{ex:2to1fromGral} {\bf (Curved$\to$Flat/GUE$\to$GOE)}
\mbox{~~}Consider now the case $g(t)=r$ for $t\leq\alpha$ and $g(t)=\infty$ for $t>\alpha$.
In this case it is clearly convenient to split $g$ at $\alpha$, as in \eqref{eq:defScatgAlpha}.
We will actually use \eqref{eq:hittProbAlpha2}.
As in Example \ref{ex:GUEfromGral} we trivially have $\wt\Psi^{g^+_\alpha}_\alpha=A^*e^{-\alpha\xi}$ while, by \eqref{eq:flatPsi}, we have $\wt\Psi^{g^-_\alpha}_{-\alpha}=A^*e^{\alpha\xi}-A^*e^{\alpha\xi}\varrho_r$.
Therefore $\wt\scat^g_\alpha=(A^*e^{\alpha\xi}-A^*e^{\alpha\xi}\varrho_r)\bar P_re^{-\alpha\xi}A=A^*\bar P_rA-A^*e^{\alpha\xi}\varrho_r\bar P_re^{\alpha\xi}A$, and thus (using $(\K^\alpha)^2=\K^\alpha$)
\begin{align}
	\pp(\aip(t)\leq r+t^2~~~\forall t\leq\alpha )&=\det\!\Big(I-\K^\alpha P_r\K^\alpha-\K^\alpha e^{\alpha\xi}\varrho_r\bar P_re^{-\alpha\xi}\K^\alpha\Big)\\
	&=\det\!\Big(I-\K P_{r+\alpha^2}\K-\K e^{\alpha\xi}\varrho_{r+\alpha^2}\bar P_{r+\alpha^2}e^{-\alpha\xi}\K\Big),
\end{align}
where in the second equality we conjugated by $e^{-\alpha^2\nabla}$ and used $e^{-\alpha^2\nabla}\varrho_re^{\alpha^2\nabla}=\varrho_{r+\alpha^2}$.
This formula coincides with the formula given in \cite[(2.19)]{qr-airy1to2}, and as a consequence (see Theorem 1 in the same paper) we obtain
\[\pp(\aip(t)\leq r+t^2~~~\forall t\leq\alpha )=F_{{\rm curved}\to{\rm flat}}(r+\min\{0,\alpha\}^2),\]
which was introduced in \eqref{eq:2to1}.
\end{ex}

\begin{ex}\label{ex:sqrtBd} {\bf (Square root barrier)} 
  \mbox{~~}We consider now the case of a square root barrier which, as discussed in Example \ref{3}, correponds to the critical case between the curved and flat domains of attraction.
  Recall that we defined (in \eqref{eq:sqrt-prob2})
  \[F_{\rm sqrt}^{\alpha,b_1, b_2}(r) = \pp\Big( \sup_{x\in\rr}\!\left\{\aip(x)-x^2 -b_1|x-\alpha|^{1/2}\uno{x<\alpha}-b_2|x-\alpha|^{1/2}\uno{x\geq \alpha})\right\} \le r\Big).\]
  We will restrict the discussion to the case $b_1,b_2>0$.
  By symmetry it suffices to compute $\Psi^{g}_\alpha$, where $g(x)=b\sqrt{x}+r$ for $x\geq0$ (and $b>0$).
  The hitting probability $\pp_y(\tau_{g}\in dt)$ can be obtained from \cite[eqn.\ts (27)]{donchev} (see also \cite{novFrishKord}), who proves that if $W(t)$ is a standard Brownian motion and $\hat\tau_{a,b}$ is the first hitting time of $a+b\sqrt{t}$ by $W(t)$ then
  \begin{equation}\label{eq:donchev}
  \pp(\hat\tau_{a,b} \in dt) = \sum_{n=1}^\infty \frac{a^{2\hat\nu_n(b)}}{\partial_\nu \Upsilon(\nu,\sqrt{2}b)\big|_{\nu=\hat\nu_n(b)}}t^{\hat\nu_n(b)-1}\,dt,
  \end{equation}
  where 
  \[\Upsilon(\nu,z)=2^\nu e^{z^2/4}D_{-2\nu}(z)=\frac{\sqrt{\pi}}{\Gamma(\tfrac12+\nu)}\ts {}_1\tsm F_1(\nu,\tfrac12;\tfrac12z^2)-\frac{\sqrt{2\pi}z}{\Gamma(\nu)}\ts {}_1\tsm F_1(\tfrac12+\nu,\tfrac32;\tfrac12z^2)\]
  with $D_{p}(z)$ the parabolic cylinder function defined in \cite[(12.2.5)]{NIST:DLMF} (for the second equality as well as integral representations of $D_p(z)$ see \cite[Sec. 9.24]{gradRyzh}) and $\big(\hat\nu_n(b)\big)_{n\geq1}$ the (negative) real zeros of the function $\Upsilon(\cdot,\sqrt{2}b)$ (in writing \eqref{eq:donchev} from the formula provided in \cite{donchev} we are using $\lim_{c\to0}c^{-\nu}\Upsilon(\nu,a/\sqrt{c})=a^{2\nu}$, as follows from \cite[(12.9.1)]{NIST:DLMF}).
  Applying a simple scaling argument to go back to Brownian motions with diffusivity 2 we get the following formula for the density of $\tau_g$:
  \begin{equation}
  \rho^{\rm sqrt}_{y,r,b}(t):=\frac{\pp_y(\tau_g \in dt)}{dt}
  =\sum_{n=1}^\infty \frac{\left(\tfrac12(y-r)\right)^{2\nu_n(b)}t^{\nu_n(b)-1}}{\partial_\nu \Upsilon(\nu,b/\sqrt{2})\big|_{\nu=\nu_n(b)}}
  \end{equation}
  where $\big(\nu_n(b)\big)_{n\geq1}$ are the (negative) real zeros of the function $\Upsilon(\cdot,b/\sqrt2)$.
  In view of this and \eqref{eq:hittProb} we get
  \begin{equation}\label{eq:sqroot}
    F_{\rm sqrt}^{\alpha,b_1,b_2}(r)=\det\!\Big(I-\K^\alpha+A\bar P_{-\alpha^2}\scat^{{\rm sqrt}}_{\alpha,b_1,b_2}\bar P_{-\alpha^2}A^*\Big),
  \end{equation}
  where $\scat^{{\rm sqrt}}_{\alpha,b_1,b_2}=\Psi^{\rm sqrt}_{-\alpha,b_1}\bar P_r(\Psi^{\rm sqrt}_{\alpha,b_2})^*$ with
  \begin{multline}
  	\Psi^{\rm sqrt}_{\alpha,b}(\lambda,y)=e^{-\alpha y}\Ai(y-\lambda)-\int_0^\infty dt\,\rho^{\rm sqrt}_{y,r,b}(t)e^{-2t^3/3+2\alpha t^2-\alpha^2t+(\alpha-t)b\sqrt{t}-\lambda t}\\
  	\times\Ai(b\sqrt{t}-\lambda+t^2-2\alpha t).
  \end{multline}
  This formula can in principle also be used directly to verify the limits in \eqref{eq:sqrtInterp}.
  In fact, and by definition, $\rho^{\rm sqrt}_{y,r,b}$ converges as $b\to0$ to the density of the hitting time of $g\equiv r$ by a Brownian motion started at $y$, while as $b\to\infty$ it converges to 0 for all $y<r$ and all $t\geq0$.
    Therefore (without checking the details about the convergence) we deduce as in Examples \ref{ex:GUEfromGral} and \ref{ex:GOEfromGral} that \[F_{\rm sqrt}^{\alpha,b_1,b_2}(r)\xrightarrow[b_1,b_2\to0]{}F_{\rm GOE}(4^{1/3}r)\qqand F_{\rm sqrt}^{\alpha,b_1,b_2}(r)\xrightarrow[b_1,b_2\to\infty]{}F_{\rm GUE}(r).\]
\end{ex}

\begin{ex}\label{ex:parabola} {\bf (Parabolic barrier)}
	\mbox{~~}We consider finally the case of a general parabolic barrier, 
	\begin{equation}
		F_{\rm parbl}^{\beta_1,\beta_2}(r)=\pp\Big(\sup_{x\in\rr}\!\left\{\aip(x)-(1+\beta_1)x^2\uno{x<0}-(1+\beta_2)x^2\uno{x>0})\right\}\le r\Big),
	\end{equation}
	introduced in Example \ref{ex:beyLD}.
	Of course, one can also consider more general cases, for instance by splitting the parabolic barrier at a point other than the origin (as in the previous example), but for simplicity we will stick to this version.
	We will assume additionally that $\beta_1,\beta_2>0$; similar formulas can be written if the $\beta_i$'s are negative, but require different formulas for the Brownian hitting times (see e.g. \cite{martin-lof}).

	\noindent To obtain a formula for $F_{\rm parbl}^{\beta_1,\beta_2}(r)$ we need an expression for the hitting probability $\pp_y(\tau_g\in dt)$ with $g(t)=r+\beta t^2$.
	This is a classical problem, which has appeared repeatedly in the literature.
	We use a formula derived in \cite{salminen}, which we take from \cite[(3.2)]{jansonLouchardMartin-lof} and says that if $\hat\tau_{x,b}$ is the hitting time of $b t^2$, $b>0$, by a standard Brownian motion started at $x<0$, then
	\[\pp(\hat\tau_{x,b} \in dt)=2^{1/3}b^{2/3}\sum_{k\geq1}e^{2^{1/3}b^{2/3}a_kt-\frac23b^2t^3}\frac{\Ai(-2^{2/3}b^{1/3}x+a_k)}{\Ai'(a_k)}dt,\]
	where $\{a_k\}_{k\geq 1}$ are the zeros of the Airy function (which are all negative).
	By Brownian scaling, the hitting time $\pp_y(\tau_g\in dt)$ (which, we recall, is defined in terms of a Brownian motion with diffusion coefficient 2) has the same distribution as $2^{-1/3}\beta^{-2/3}\hat\tau_{2^{-1/3}\beta^{1/3}(y-r),\frac12}$, and thus
	\begin{equation}\label{eq:salmon}
	  \rho^{\rm parbl}_{y,r,\beta}(t):=\frac{\pp_y(\tau_g \in dt)}{dt}
  	  =\beta^{2/3}\sum_{k\geq1}e^{\beta^{2/3}a_kt-\frac13\beta^2t^3}\frac{\Ai(\beta^{1/3}(r-y)+a_k)}{\Ai'(a_k)}.
	\end{equation}
	Alternatively, we may use \cite[Lem. 3.2 and Rem. 3.3]{jansonLouchardMartin-lof} to write
	\begin{equation}\label{eq:salmon2}
	  \rho^{\rm parbl}_{y,r,\beta}(t)=\beta^{2/3}\frac1{2\pi\I}\int_{\I\rr} dz\, e^{\beta^{2/3}zt - \frac13\beta^2t^3}\frac{\Ai(z+\beta^{1/3}(r-y))}{ \Ai (z)}.
	\end{equation}
	We get
   \begin{equation}\label{eq:parabola}
     F_{\rm parbl}^{\beta_1,\beta_2}(r)=\det\!\Big(I-\K+A\bar P_{0}\scat_{{\rm parbl}}^{\beta_1,\beta_2}\bar P_{0}A^*\Big),
   \end{equation}
   where $\scat_{{\rm parbl}}^{\beta_1,\beta_2}=\Psi^{\rm par}_{\beta_1}\bar P_r(\Psi^{\rm par}_{\beta_2})^*$ with
   \begin{equation}
   	\Psi^{\rm par}_{\beta}(\lambda,y)=\Ai(y-\lambda)-\int_0^\infty dt\,\rho^{\rm parbl}_{y,r,\beta}(t)e^{-2t^3/3-\beta t^3+\lambda t}\Ai((\beta+1)t^2-\lambda).
   \end{equation}

  \noindent As in Example \ref{ex:sqrtBd}, the formula can be used to verify that $F_{\rm parbl}^{\beta_1,\beta_2}(r)$ interpolates between  $F_{\rm GUE}$ and $F_{\rm GOE}$.
  Moreover, in this case it is not hard to do this explicitly (although we will only sketch the argument, skipping the technical details).
  On the one hand, since $a_k<0$ for all $k\geq1$, it is easy to see from \eqref{eq:salmon} that $\rho^{\rm parbl}_{y,r,\beta}(t)\xrightarrow[\beta\to\infty]{}0$ for all $t\geq0$ and all $y<r$, and thus
  \[F_{\rm parbl}^{\beta_1,\beta_2}(r)\xrightarrow[\beta_1,\beta_2\to\infty]{}F_{\rm GUE}(r)\]
  as in Example \ref{ex:GUEfromGral}.
  To deal with the case $\beta\to0$ we use \eqref{eq:salmon2} and rescale to get, setting $\hat{y}=r-y$,
  \begin{align}
  	\rho^{\rm parbl}_{y,r,\beta}(t)&=\frac1{2\pi\I}\int_{\I\rr}dz\,e^{tz-\frac13\beta^2 t^3} \frac{ \Ai (\beta^{-2/3} (z+\beta\hat y )}{ \Ai (\beta^{-2/3} z)}\\
    &=\frac1{2\pi\I}\int_{\I\rr}dz\,e^{tz - [ \frac23 (\beta^{-2/3} (z+\beta\hat{y} ))^{3/2} - \frac23 (\beta^{-2/3} z)^{3/2}]+o(1)}
    =\frac1{2\pi\I}\int_{\I\rr}dz\,e^{tz - z^{1/2}\hat{y}+o(1)},
  \end{align}
  where we used the asymptotics of the Airy function \eqref{eq:airyBd} and by $o(1)$ we mean a term which goes to 0 as $\beta\to0$.
  We change variables $u\mapsto z^{1/2}$, which turns the contour $\I\rr$ into the contour $\Gamma$ composed of the two rays starting at the origin with angles $\pm\pi/4$.
  We get
  \[\rho^{\rm parbl}_{y,r,\beta}(t)=\frac{1}{\pi\I}\int_{\Gamma}du\,u \ts e^{tu^2 - u \hat{y}+o(1)}=\frac{\hat y}{2\pi\I t}\int_{\Gamma}du\,e^{tu^2 - u \hat{y}+o(1)}\]
  as $\beta\to0$, where in the second equality we used $\partial_u e^{tu^2 - u \hat{y}} = (2tu -\hat{y}) e^{tu^2 - u \hat{y}}$ and the fact that $e^{tu^2 - u \hat{y}}$ vanishes at either end of the contour $\Gamma$.
  Now we change variables $u=(2t)^{-1/2} v$ to get
  \[\rho^{\rm parbl}_{y,r,\beta}(t)\xrightarrow[\beta\to0]{}\tfrac{\hat{y}}{(2t)^{3/2}\pi\I} \int_{\Gamma}dv\,e^{\frac{v^2}2 - v(2t)^{-1/2} \hat{y}}
	=\tfrac{\hat{y}}{(2t)^{3/2}\pi }\int_{-\infty}^\infty\!dw\, e^{-\frac{w^2}2 - \I w(2t)^{-1/2} \hat{y}}
    =\tfrac{\hat{y}}{2\sqrt{\pi}t^{3/2} }e^{ -\frac{\hat{y}^2}{4t}},\]
  where in the first equality we deformed the contour $\Gamma$ back to $\I\rr$ and changed variables $v\mapsto \I w$.
  The result coincides, as expected, with the density of the hitting time to $r$ of a Brownian motion started at $y<r$, and hence as $\beta_1,\beta_2\to0$ we recover the flat case (Example \ref{ex:GOEfromGral}):
  \[F_{\rm parbl}^{\beta_1,\beta_2}(r)\xrightarrow[\beta_1,\beta_2\to0]{}F_{\rm GOE}(4^{1/3}r).\]
\end{ex}

\section{Proofs of the results from Section \ref{sub:criterion}}\label{ref:pfMainThm}

\subsection{Proof of Lemma \ref{lem:lap}}

We begin with the proof of the random Laplace asymptotics.
Without loss of generality we may assume that $a_t=t$.
Assuming that the measures $\nu_t$ are deterministic, standard arguments (see e.g. \cite{varadhanCBMS}) show that for any compact $K$,
\begin{equation}
  \lim_{t\to\infty} t^{-1} \, \log \int_{K} e^{t[-(x-y)^2+X^t(x-y)] }\, d\nu_t(y) 
  \,\stackrel{({\rm d})}{=}\, \sup_{y\in K} \big\{ X(x-y)-(x-y)^2- I(y)\big\}.
\end{equation}
This limit can be extended easily to our setting by conditioning, since in our case the measures $\nu_t$ are independent of $X^t$.
Therefore, since $\log(a\vee b)\leq\log(a+b)\leq\log(2(a\vee b))$ for all $a,b>0$, and choosing $K$ to be of the form $[x-M,x+M]$, it suffices to show that, under our conditions,
 \begin{equation}\label{eq:LDPreduct}
   \limsup_{M\to\infty} \limsup_{t\to\infty} t^{-1} \log \int_{|y-x|>M} e^{t[-(x-y)^2+X^t(x-y)]}\, d\nu_t(y)  = -\infty
\end{equation}
in distribution.
That is, we need to show that given any $\ep>0$ and $\ell>0$,
\begin{equation}\label{eq:needRLA}
  \mathsf{P}_t\!\otimes\!\mathsf{Q}_t\!\left(  t^{-1}\log \int_{|y-x|>M} e^{t[-(x-y)^2+X^t(x-y)]}\, d\nu_t(y)\ge -\ell\right)\leq\ep
\end{equation}
for $M>M_0$, $t>t_0$ and large enough $M_0>0$ and $t_0>0$.

Recalling our assumptions \eqref{qbd} and \eqref{abd}, we may choose $\bar\delta\in(\delta_2,\delta_1)$ and some large enough $M_0$ and $t_0$ so that
\begin{gather}\label{eq:infh0}
  {\sf Q}\Big(\inf_{|y-x|\geq M}I(y)\leq\ell-(1-\bar\delta)M^2\Big)<\ep/2\\
  \shortintertext{and}
  \mathsf{P}_t \bigg(\sup_{|y-x|>M} \big\{X^t(x-y)- \bar\delta(x-y)^2\big\}\le 0\bigg)\geq1-\ep/2.
\end{gather}
for $t\geq t_0$, $M\geq M_0$.
On the other hand, if $X^t(x-y)\leq \bar\delta(x-y)^2$ for $|y-x|>M$ then
\begin{multline}
  t^{-1}\log \int_{|y-x|>M} e^{t[-(x-y)^2+X^t(x-y)]}\, d\nu_t(y)\\
  \le -(1-\bar\delta)\inf_{|y-x|>M} (x-y)^2+ t^{-1}\log \nu_t((x-M,x+M)^\text{c}).
\end{multline}
Hence, since $\nu_t$ and $X^t$ are independent, we get for any $t\ge t_0$ that
 \begin{multline}
  \mathsf{P}_t\!\otimes\!\mathsf{Q}_t\!\left(  t^{-1}\log \int_{|y-x|>M} e^{t[-(x-y)^2+X^t(x-y)]}\, d\nu_t(y)\ge -\ell\right ) \\
   \le\mathsf{Q}_t\Big(t^{-1}\log \nu_t((x-M,x+M)^\text{c})  \ge -\ell+(1-\bar\delta)M^2\Big) + \ep/2 .
\end{multline}
Taking $t\to\infty$ and using the assumption that $\mathsf{Q}_t$ satisfies a large deviation principle with respect to $I$, the right hand side is bounded by
\begin{equation}
 \overline{\mathsf{Q}}\!\left(-\inf_{|y-x|\geq M}I(y)\ge -\ell+(1-\bar\delta)M^2\right) +\ep/2\leq\ep,
\end{equation}
where in the last inequality we have used \eqref{eq:infh0}.
This finishes our proof of \eqref{eq:needRLA} and of the lemma.

\subsection{Proof of Theorem \ref{thm:lapl-KPZ}}

By Theorem \ref{thm:CH}, we know that the family of processes $(\aipt)_{t>0}$ is tight.
Let $\tilaip$ be a subsequential limit, say $\mathcal{A}^{t_n}\longrightarrow\tilaip$ when $n\to \infty$ (with $t_n\nearrow \infty$) in distribution.
We claim that it is enough to show that for all $\kappa>0$,
\begin{equation}\label{eq:pvBd}
\lim_{r\to \infty}\lim_{n\to \infty}\tsm\pp\!\left(\sup_{x\in\rr}\left\{\mathcal{A}^{t_n}(x)-\kappa x^2\right\}\geq r\right)=0.
\end{equation}
In fact, using \eqref{eq:At2} and noting that $x$ is a fixed parameter, we may write
\begin{multline}
\mathscr{Q}_{t_n}\!\otimes\!\mathscr{P}_{t_n}\!\!\left(\frac{-h(t_n,2^{1/3}t_n^{2/3}x)+\inv{24}t_n}{2^{-1/3}t_n^{1/3}}\leq r\right)\\
=\mathscr{Q}_{t_n}\!\otimes\!\mathscr{P}_{t_n}\!\!\left(2^{1/3}t_n^{-1/3}\log\int d\mu^\delta_{t_n}(y)\, e^{2^{-1/3}t_n^{1/3}\big[\mathcal{A}^{t_n}_{\delta,x}(x-y)-(x-y)^2\big] }\leq r\right)
\end{multline}
where $\mathcal{A}^t_{\delta,x}(z)=\mathcal{A}^t(z)+\delta(z-x)^2$.
Now $\mu^\delta_{t}$ satisfies a large deviation principle with rate function $\mathfrak{h}_0(x)+\delta x^2$, while $\mathcal{A}^{t_n}_{\delta,x}(x-y)$ converges in distribution to $\tilaip(x-y)+\delta y^2$.
On the other hand, if we let $\bar\delta_1>0$ be such that \eqref{qbd} holds for $I(y)=\mathfrak{h}_0(y)$, then for $I(y)=\mathfrak{h}_0(y)+\delta y^2$ the same assumption holds now with $\delta_1=\bar\delta_1+\delta$.
By \eqref{eq:pvBd} we see then that $\mathcal{A}^t_{\delta,x}$ satisfies \eqref{abd} with some choice of $\delta_2<\delta_1$. 
By Lemma \ref{lem:lap} we now deduce that the above probability converges to
\begin{equation}
  \mathscr{Q}\!\otimes\!\widetilde{\mathscr{P}}\!\left(\sup_{y\in\rr}\left\{ \tilaip(x-y)- (x-y)^2 - \mathfrak{h}_0(y)\right\}\leq r\right)
\end{equation}
as desired.

It remains to prove \eqref{eq:pvBd}, but this follows straightforwardly from the following result from \cite{corwinHammondKPZ}:

\begin{lem}[{\cite[Lemma 4.1]{corwinHammondKPZ}}]
  Let\footnote{We point out that in \cite{corwinHammondKPZ} the definition of $\mfHnw$ has no minus sign in front of $\hnw$; this just reflects the fact that, compared to them, we chose the opposite sign of the non-linearity in \eqref{eq:KPZ}.}
  \[\mfHnw_t(x)=\frac{-\hnw(t,t^{2/3}x)+\frac1{24}t}{t^{1/3}}\]
  and fix $\alpha>0$ and $\eta>0$.
  Then there is a constant $\beta>0$ such that, for all $t\geq1$,
  \[\pp\!\left(\mfHnw_t(y)+\tfrac12y^2\geq\alpha y^2+\beta~~\forall\,y\in\rr\right)<\eta.\]
\end{lem}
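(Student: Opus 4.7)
The statement as literally worded reduces to a one-point estimate, so the plan is to first eliminate the universal quantifier by specializing to $y=0$ and then quote a known uniform upper-tail bound for the narrow wedge solution. If the event
\[
E_t(\alpha,\beta):=\{\mfHnw_t(y)+\tfrac12y^2\ge\alpha y^2+\beta\text{ for all }y\in\rr\}
\]
occurs, then taking $y=0$ — at which point both the $\tfrac12y^2$ term on the left and the $\alpha y^2$ term on the right vanish — forces $\mfHnw_t(0)\ge\beta$. Consequently, regardless of the value of $\alpha>0$,
\[
\pp(E_t(\alpha,\beta))\le\pp(\mfHnw_t(0)\ge\beta).
\]

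It therefore suffices to exhibit $\beta>0$ for which $\sup_{t\ge 1}\pp(\mfHnw_t(0)\ge\beta)<\eta$. By \eqref{eq:defFtx} this quantity equals $1-F_{t,0}(2^{1/3}\beta)$, and $F_{t,0}$ is the crossover distribution of \cite{acq}, given by an explicit Fredholm determinant. From that formula one extracts, by standard contour-deformation estimates on the kernel (carried out for instance in the moderate-deviations analysis of the ACQ distribution and recorded in various subsequent references), constants $C,c>0$ independent of $t\ge 1$ such that
\[
\pp(\mfHnw_t(0)\ge\beta)\le Ce^{-c\beta^{3/2}}\qquad\text{for all }\beta\ge 1.
\]
Choosing $\beta$ large enough that $Ce^{-c\beta^{3/2}}<\eta$ yields $\pp(E_t(\alpha,\beta))<\eta$ uniformly in $t\ge 1$, as required.

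The only substantive input is the uniform-in-$t$ upper-tail estimate for $\mfHnw_t(0)$; the reduction to a one-point event is essentially free because $E_t(\alpha,\beta)$ is an intersection over all $y\in\rr$ and the two quadratic corrections happen to coincide at the origin. Under this literal reading the parameter $\alpha>0$ plays no role in the final bound — the work is entirely in the one-point upper tail — and any parameter dependence would only arise if one sought to exploit additional constraints at $y\neq 0$, which is unnecessary here.
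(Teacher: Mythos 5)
Your argument is a correct deduction from the statement \emph{as literally typeset}, but the two features you yourself flag as suspicious---that $\alpha$ plays no role, and that everything collapses to a one-point tail---should have told you that the ``$\forall\,y\in\rr$'' in the displayed event is a misprint for ``for some $y\in\rr$''. Lemma 4.1 of \cite{corwinHammondKPZ} is a \emph{uniform parabolic upper bound on the spatial process}: it controls
\[
\pp\Bigl(\sup_{y\in\rr}\bigl\{\mfHnw_t(y)+\tfrac12y^2-\alpha y^2\bigr\}\geq\beta\Bigr),
\]
uniformly over $t\geq 1$. This is exactly what the surrounding text needs: the paper quotes the lemma precisely in order to deduce \eqref{eq:pvBd}, which is a statement about $\pp\bigl(\sup_x\{\mathcal{A}^{t_n}(x)-\kappa x^2\}\geq r\bigr)$, i.e., about the process exceeding a parabola \emph{somewhere}. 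The ``for all $y$'' event you analyze (the process lying above the parabola everywhere) is contained in the one-point event at $y=0$ and cannot yield \eqref{eq:pvBd} at all.

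The real content is therefore not reachable by the route you propose. A one-point tail bound on the ACQ crossover distribution cannot be upgraded to a supremum bound over a continuum by a union bound; one needs quantitative control of the spatial increments of $y\mapsto\mfHnw_t(y)$, uniformly in $t\geq 1$. That control is precisely what the Corwin--Hammond KPZ line ensemble provides through its $\mathbf{H}_t$-Brownian Gibbs property, which lets them compare the spatial increments of $\mfHnw_t$ to those of a Brownian bridge and thereby upgrade pointwise tail bounds to a bound on the supremum over all of $\rr$. The paper makes exactly this point in the sentence following the lemma, and it is not a result that admits the elementary reduction in your write-up.
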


The context in which they are able to obtain this estimate is their construction of the \emph{KPZ line ensemble}, which allows them to view the solution of the KPZ equation with narrow wedge initial data as the top line in an infinite collection of curves which enjoy what they call the $\mathbf{H}_t$-Brownian Gibbs property.
They prove the lemma, among many other results about the solution of the KPZ equation, by carefully making use of this construction.
The limit \eqref{eq:pvBd} follows directly from the lemma after observing that $\mfHnw_t(x)=2^{-1/3}\mathcal{A}^t(2^{1/3}x)-\tfrac12x^2$.

\subsection{Proof of Corollary \ref{thm:main}}

We start by noting that $\mathfrak{h}_0(x)+\delta x^2$ satisfies \eqref{qbd} for all three choices of $\mathfrak{h}_0$ (i.e. $\mathfrak{h}_0(0)=0$ and $\mathfrak{h}_0(x)=-\infty$ for $x\neq0$; $\mathfrak{h}_0\equiv0$; and $\mathfrak{h}_0(x)=\sqrt{2}B(x)$) and any $\delta\in[0,1)$, and thus Theorem \ref{thm:lapl-KPZ} can be applied to obtain tightness of the $F^\mu_{t,x}$.
So we only need to prove the remaining claims.

Consider first the curved case.
From \cite{acq} it is known that if the initial data is prescribed as $Z(0,x)=\delta_0(x)$ for the SHE, then $F^\mu_{t,x}(r)$ actually converges to $F_{\rm GUE}(r+x^2)$.
The limit in \eqref{eq:limCurved} then follows from Corollary \ref{cor:domain}, since this choice of initial data clearly belongs to the domain of attraction characterized by our choice of $\mathfrak{h}_0$.

Now suppose that $\mu$ satisfies \eqref{eq:muCurved}.
We need to show that the measures $\mu^0_t$ satisfy a large deviation principle with rate function $\mathfrak{h}_0(0)=0$  and $\mathfrak{h}_0(x)= \infty$ if $x\neq0$.
For closed $\mathcal{C}\subset\rr$ this translates into showing that
\[\limsup_{t\to \infty}t^{-1/3}\log(\mu^0_t(\mathcal{C}))=-\infty\;\;\; \text{if }0\notin \mathcal{C}
\qand \limsup_{t\to \infty}t^{-1/3}\log(\mu^0_t(\mathcal{C}))\leq 0\;\;\; \text{if }0\in\mathcal{C}.\]
If $0\notin \mathcal{C}$ then $\mathcal{C}\subset(-r,r)^\text{c}$ for some $r>0$, and then by hypothesis
\begin{equation}\label{eq:closedCurved}
  t^{-1/3}\log(\mu^0_t(\mathcal{C}))\leq t^{-1/3}\big[\log(c)-\kappa (2^{-1/3}t^{2/3}r)^{1/2+\delta}\big]\xrightarrow[t\to \infty]{}- \infty.
\end{equation}
The case $0\in\mathcal{C}$ is even simpler because by hypothesis we know that $\mu^0_t(\rr)=\mu(\rr)< \infty$, and thus 
\[t^{-1/3}\log(\mu^0_t(\mathcal{C}))\leq t^{-1/3}\log(\mu(\rr))\xrightarrow[t\to \infty]{}0.\]
On the other hand, in the case of open $\mathcal{O}\subset\rr$ we only need to show that
\[\liminf_{t\to \infty}t^{-1/3}\log(\mu^0_t(\mathcal{O}))\geq0\quad\text{if }0\in \mathcal{O}.\]
But if $0\in\mathcal{O}$, then $\mathcal{O}\supseteq(-r,r)$ for some $r>0$, so $\mu^0_t(\mathcal{O})\geq\mu([-2^{-1/3}t^{1/3}r,2^{-1/3}t^{1/3}r])$.
Since $\mu(\rr)>0$, we may find some $M>0$ so that the last quantity is bounded below by $M$ for large enough $t$, and thus the estimate follows.

We turn now to the flat case.
The fact that the ordering holds is just \eqref{eq:orderGOEGUE}.
Now suppose that $\mu$ satisfies \eqref{eq:muFlat}. 
We will show that for any $0<\delta<1$ the measures $\mu^\delta_t$ satisfy a large deviation principle with rate function $\mathfrak{h}_0(x)=\delta x^2$.

Consider a closed $\mathcal{C}\subseteq\rr$.
If $\mathcal{C}=\emptyset$ the condition is trivial.
Otherwise, assume first that $0\notin\mathcal{C}$ and let $r=\min\{|x|\!:x\in\mathcal{C}\}$, which is necessarily positive (and finite).
Since $\mathcal{C}\subseteq(-r,r)^\text{c}$, using \eqref{eq:deltaResc} and the first inequality in \eqref{eq:muFlat} and integrating by parts we get
\begin{equation}\label{eq:muFlatClosed}
    \mu_t^\delta(\mathcal{C}\cap[0,\infty))
    \leq\int_{x>r} d(\mu_t([r,x]))\,e^{-\delta2^{-1/3}t^{1/3}x^2}
    \leq c\tsm\int_{x>r}\!\!\!dx\, t^{1/3}xe^{-\delta2^{-1/3}t^{1/3}x^2+\kappa_2 (t^{2/3}x)^{1/2-\delta_2}}
 \end{equation}
for some constant $c>0$.
The same bound can be obtained for $\mu_t^\delta(\mathcal{C}\cap(-\infty,0])$, and thus we get $\log(\mu_t^\delta(\mathcal{C}))\leq c'-\delta2^{-1/3}t^{1/3}r^2+\kappa_2t^{1/3-\delta_2/3}r^{1/2-\delta_2}$ for some $c'>0$.
Hence
\[\limsup_{t\to \infty}2^{1/3}t^{-1/3}\log(\mu^\delta_t(\mathcal{C}))\leq-\delta r^2=-\inf_{x\in\mathcal{C}}\delta x^2\]
by our choice of $r$, which gives the desired condition.
The case $0\in\mathcal{C}$ follows from a similar, and simpler, argument, bounding $\mu_t^\delta(\mathcal{C})$ directly by $\mu_t^\delta(\rr)$.

Now take an open $\mathcal{O}\subseteq\rr$.
As before, if $\mathcal{O}=\emptyset$ the inequality is trivial.
Otherwise, let $r=\inf\{|x|\!:x\in\mathcal{O}\}\in[0,\infty)$.
We will assume for simplicity that $r\in\overline{\mathcal{O}}$ (otherwise $-r\in\overline{\mathcal{O}}$ and the same argument works).
Then there is some $\ep>0$ such that $(r,r+\ep)\subseteq\mathcal{O}$, and then by an argument similar to \eqref{eq:muFlatClosed}, but using now the second inequality in \eqref{eq:muFlat}, we have
\begin{align}
  \mu_t^\delta(\mathcal{O})&\geq\int_{r}^{r+\ep}d(-\mu_t([x,r+\ep]))\,e^{-\delta2^{-1/3} t^{1/3}x^2}\\
  &=\int_{r}^{r+\ep}\!\!dx\,2^{2/3}\delta t^{1/3}xe^{-\delta2^{-1/3} t^{1/3}x^2}\mu_t([x,r+\ep])+e^{-\delta2^{-1/3} t^{1/3}r^2}\mu_t([r,r+\ep])\\
  &\geq c_1\ts e^{-\delta2^{-1/3} t^{1/3}r^2-\kappa_1(t^{2/3}r)^{1/2-\delta_1}}.
\end{align}
From this we get 
\[\limsup_{t\to \infty}2^{1/3}t^{-1/3}\log(\mu^\delta_t(\mathcal{O}))\geq-\delta r^2=-\inf_{x\in\mathcal{O}}\delta x^2,\]
which gives the desired condition.

Finally, in the stationary case, the proof of \eqref{eq:limStat} is the same as the one given for \eqref{eq:limCurved}, using the result of \cite{borCorFerrVeto} (which computed the limiting one-point fluctuations of the KPZ equation with initial condition given as $Z(0,x)=e^{-B(x)}$ for the SHE) in place of the result of \cite{acq}.
The statement about $\mu(dx)=e^{\Theta(x)}dx$ with $\Theta$ converging to a Brownian motion under diffusive scaling is easy to check (and is the same as Example \ref{ex:inFn}).

\section{Hitting probabilities for the Airy$_2$ process}\label{sec:hitting}

The goal of this section is to prove Theorem \ref{thm:hittProb}. 
Our starting point is Theorem 2 in \cite{cqr}, which gives a formula for the probability that
the Airy$_2$ process hits a general function on a finite interval. Before stating the
formulas, let us recall the definition of the Airy$_2$ process and introduce some notation
(for more details see \cite{quastelRem-review}).

First recall the definition \eqref{eq:Pm} of $P_m$ as the projection onto the interval $(m,\infty)$ and if $\bar P_m=I-P_m$.
Next define the \emph{Airy Hamiltonian} 
\begin{equation}\label{eq:airyHam}
	H=-\Delta+x.
\end{equation}
Since the Airy function satisfies $\Ai''(x)=x\Ai(x)$, we have $H\!\hspace{0.05em}\Ai(x-\lambda)=\lambda\!\hspace{0.1em}\Ai(x-\lambda)$.
Hence the shifted Airy functions are the (generalized) eigenfunctions of the Airy Hamiltonian $H$, and the Airy kernel $\K$ (defined in \eqref{eq:KAi}) is the projection onto the negative eigenspace associated to $H$.
In particular, this implies that, even though $e^{tH}$ is not well-defined for
$t>0$, $e^{tH}\K$ makes sense for all $t\in\rr$ and its integral kernel is given by
\[e^{tH}\K(x,y)=\int_{-\infty}^0 d\lambda\,e^{\lambda t}\Ai(x-\lambda)\Ai(y-\lambda).\]
Making use of these objects, the Airy$_2$ process can be defined through its finite-dimensional distributions by the following determinantal formula: given
$x_1,\dots,x_m\in\mathbb{R}$ and $t_1<\dots<t_m$ in $\mathbb{R}$,
\begin{multline}\label{eq:airy2PrDef}
  \pp\!\left(\aip(t_1)\le x_1,\dots,\aip(t_m)\le x_m\right) \\
  = \det\!\left(I-K_{\Ai}+\bar P_{x_1}e^{(t_1-t_2)H}\bar P_{x_2}e^{(t_2-t_3)H}\dotsm
   \bar P_{x_n}e^{(t_n-t_1)H}K_{\Ai}\right),
\end{multline}
where, as it is the case throughout this paper, the Fredholm determinant is computed in $L^2(\rr)$.
We remark that this is not the most usual definition of the finite-dimensional distributions of the Airy$_2$ process, which utilizes the so-called extended Airy kernel and a Fredholm determinant computed on $L^2(\{t_1,\dotsc,t_n\}\times\rr)$ (see \cite{prahoferSpohn,prolhacSpohn,quastelRem-review,bcr} for more on this), but it is the one that is most convenient for our purposes.
In fact, this formula was used in \cite{cqr} to derive a formula for the probability that the Airy$_2$ process stays below a function $g$ on a finite interval $[\ell_1,\ell_2]$ by essentially discretizing $g$ on a finite mesh of $[\ell_1,\ell_2]$, using \eqref{eq:airy2PrDef}, and then making the mesh size go to 0. 
Before stating the formula we need the following definition.

\begin{defn}\label{defn:theta}
  Fix $\ell_1<\ell_2$ and let $g\in H^1([\ell_1,\ell_2])$. We define
  $\Theta^g_{\ell_1,\ell_2}$ as the solution operator (acting on $L^2(\rr)$) of the
  following boundary value problem:
  \begin{equation}
    \begin{aligned}
      \p_tu+Hu&=0\quad\text{for }x<g(t)+t^2, \,\,t\in (\ell_1,\ell_2)\\
      u(\ell_1,x)&=f(x)\uno{x<g(\ell_1)+\ell_1^2}\\
      u(t,x)&=0\quad\text{for }x\ge g(t)+t^2.
    \end{aligned}\label{eq:bdval}
  \end{equation}
  This means that for $f\in L^2(\rr)$, $\Theta^g_{\ell_1,\ell_2}f(\cdot)=u(\ell_2,\cdot)$,
  where $u(\ell_2,\cdot)$ is the solution at time $\ell_2$ of \eqref{eq:bdval}.
\end{defn}
 
We remark that this definition of $\Theta^g_{\ell_1,\ell_2}$ is slightly different from
the one in \cite{cqr} because for convenience here we have shifted $g(t)$ by $t^2$ in
\eqref{eq:bdval}.

The continuum statistics formula derived in \cite{cqr} can then be written as follows: for
$g\in H^1([\ell_1,\ell_2])$,
 \begin{equation}
   \pp\!\left(\aip(t)\leq g(t)+t^2\text{ for }t\in[\ell_1,\ell_2]\right)
   =\det\!\left(I-K_{\Ai}+e^{-\ell_1 H}\K\Theta^g_{\ell_1,\ell_2}  e^{\ell_2H}K_{\Ai}\right).\label{eq:cqr}
 \end{equation}
To obtain this formula from Theorem 2 of \cite{cqr} we have used the cyclic property of the determinant,
\[\det(I+K_1K_2)=\det(I+K_2K_1)\]
whenever $K_1$ and $K_2$ are operators on $L^2(\rr)$ such that $K_1K_2$ and $K_2K_1$ are trace class, together with the fact that $e^{(s_1+s_2)H}\K=e^{s_1 H}\K e^{s_2H}\K$ for all $s_1,s_2\in\rr$.
For more details see \cite{cqr} or \cite{quastelRem-review} (particularly Sections 1.5.2 and 3).

In view of \eqref{eq:cqr} we need to take $-\ell_1=\ell_2=L$ and compute
\[\lim_{L\to\infty}e^{LH}\K\Theta^g_{-L,L}e^{LH}K_{\Ai}.\]
We start by noting that the operator $\Theta^g_{\ell_1,\ell_2}$ can be expressed as an integral operator on $L^2(\rr)$ with an explicit integral kernel, which is given in Theorem 3 of \cite{cqr}:
\begin{multline}\label{eq:ThetaL}
 \Theta^g_{\ell_1,\ell_2}(x_1,x_2)=e^{\ell_2^3/3-\ell_1^3/3+\ell_1
   x_1-\ell_2x_2}p(\ell_2-\ell_1,x_2-x_1)\\
 \times\pp_{\ell_1,x_1-\ell_1^2;\ell_2,x_2-\ell_2^2}\!\left(B(t)\leq g(t)\text{ on
   }[\ell_1,\ell_2]\right),
\end{multline}
where the probability is computed with respect to a Brownian bridge $B(s)$ from
$x_1-\ell_1^2$ at time $\ell_1$ to $x_2-\ell_2^2$ at time $\ell_2$ and with diffusion
coefficient $2$, and where $p(t,x)=(4\pi t)^{-1/2}e^{-x^2/4t}$ denotes the transition
kernel of a Brownian motion with diffusion coefficient 2.

Now define
\begin{equation}
 \label{eq:barScatL}
 \overline{\scat}^{g}_L=e^{LH}\K\Theta^g_{-L,L}e^{LH}\K.
\end{equation}

\begin{thm}\label{thm:limL}
  Suppose that $g\in H^1_{\rm ext}(\rr)$ is such that $g(t)\geq c-\kappa\ts t^2$ for some $\kappa\in[0,3/4)$.
  Then the family of operators $\big(\K-\overline{\scat}^{g}_L\big)_{L>0}$ has a limit in trace class norm.
  Moreover, the limit has the following explicit representation: 
  \[\lim_{L\to\infty}\big(\K-\overline{\scat}^{g}_L\big)=\K-A\bar P_{0}\scat^{g}\bar P_{0}A^*,\]
  where $\scat^{g}$ is the Brownian scattering operator introduced in Definition \ref{defn:brScatt}.
\end{thm}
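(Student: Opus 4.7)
\quad My plan is to begin from the explicit Brownian-bridge representation \eqref{eq:ThetaL} of $\Theta^g_{-L,L}$ and split the bridge at the midpoint $t=0$. Conditioning on $B(0)=y$ factors the bridge density, and by the Markov property decouples the non-crossing event into two independent one-sided non-crossing events on $[-L,0]$ and $[0,L]$. After this decomposition, using $\K=A\bar P_0A^*$ together with the intertwining $e^{LH}A=Ae^{L\Lambda}$ (where $\Lambda$ denotes multiplication by $\lambda$), I would recast \eqref{eq:barScatL} as
\[\overline{\scat}^g_L=A\bar P_0\,\scat^g_L\,\bar P_0A^*,\qquad \scat^g_L(\lambda_1,\lambda_2)=\int_{-\infty}^{g(0)}dy\,\phi^L_1(\lambda_1,y)\,\phi^L_2(\lambda_2,y),\]
for kernels $\phi^L_i$ that remain to be identified.

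Next, I would evaluate $\phi^L_i$ using the first-hitting representation of the half-bridge non-crossing probability,
\[\pp_{0,y;T,w}\!\left(W_s\leq g(s)\ \forall s\in[0,T]\right)=1-\frac{1}{p(T,w-y)}\int_0^T\pp_y(\tau_g\in dt)\,p(T-t,w-g(t)).\]
Inserted into \eqref{eq:ThetaL}, the denominator $p(T,w-y)$ cancels the $p(2L,\cdot)$ prefactor of $\Theta^g_{-L,L}$, and the residual $L$-dependent Gaussians and exponentials $e^{L\lambda}$ then collapse via the Airy/heat identity
\[\int\tfrac{dx}{\sqrt{4\pi s}}\,e^{-(y-x)^2/(4s)}\,\Ai(x+a)=e^{s(y+a)+2s^3/3}\,\Ai(y+a+s^2),\]
applied first with $s=L$ (for the $1$-term) and then with $s=L-t$ (for the hitting-time term). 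After a short calculation, every power of $L$ in the exponent cancels, yielding
\[\phi^L_1(\lambda,y)=\Ai(y-\lambda)-\int_0^L\!\pp_y(\tau_{g^-}\!\in\!dt)\,e^{-2t^3/3-(g^-(t)-\lambda)t}\,\Ai(g^-(t)-\lambda+t^2),\]
i.e.\ exactly $\Psi^{g^-}$ with the $t$-integration truncated at $L$; analogously $\phi^L_2$ is $\Psi^{g^+}$ truncated at $L$. In particular $\phi^L_i\to\Psi^{g^\mp}$ pointwise as $L\to\infty$.

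To upgrade pointwise convergence to convergence in trace-class norm, I would factor
\[A\bar P_0\,\scat^g_L\,\bar P_0A^*=M^L_1(M^L_2)^*,\qquad M^L_i=A\bar P_0\,\phi^L_i\,\bar P_{g(0)},\]
and use the bound $\|AB^*-CD^*\|_1\leq\|A-C\|_2\|B\|_2+\|C\|_2\|B-D\|_2$ to reduce matters to the Hilbert-Schmidt convergence $M^L_i\to M^\infty_i$ together with a uniform HS bound. The integral kernel of $M^L_i$ equals $\K(x,y)-(\text{correction}_L)(x,y)$ for $y<g(0)$. Neither piece is HS on its own ($\K(y,y)\sim|y|^{1/2}$ as $y\to-\infty$, so even $\K\bar P_{g(0)}$ is not HS), but their combination is: the hitting-time correction produced in Step 2 is exactly what is needed to cancel the oscillatory tail of $\K(x,y)$ as $y\to-\infty$. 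For the difference $M^L_i-M^\infty_i$ the $\K$-terms cancel and only a Brownian tail integral over $t\in[L,\infty)$ remains; using the Airy asymptotic $\Ai(z)\sim e^{-(2/3)z^{3/2}}$ applied to $\Ai(g^\pm(t)-\lambda+t^2)$ together with the hypothesis $g(t)\geq c-\kappa t^2$ produces pointwise decay of order $e^{-2t^3/3-(2/3)(1-\kappa)^{3/2}t^3}$ against the Brownian hitting measure $\pp_y(\tau_{g^\pm}\in dt)$, and a Cauchy-Schwarz / dominated convergence argument then delivers HS convergence.

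The hard part is Step 3. The combined HS integrability relies on a delicate match between the Airy kernel $\K(x,y)$ and the hitting-time integrand produced in Step 2 that cannot be separated without losing control of the $y\to-\infty$ behavior; this is where the explicit cancellation from Step 2 is used in an essential way, and where the $H^1_{\rm ext}$-regularity of $g$ enters in order to make sense of (and to estimate) $\pp_y(\tau_{g^\pm}\in dt)$ uniformly in $y$. The restriction $\kappa<3/4$ is an artefact of the most natural Cauchy-Schwarz estimate; the natural range is $\kappa<1$, which would correspond precisely to balancing the Airy decay $e^{-(2/3)t^3}$ against the parabolic dip allowed in $g$, and improving the threshold (as hinted in Remark \ref{rem:long}.1) would require a finer dyadic decomposition in the $t$-variable. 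Once the HS bounds and convergence are established, the triangle bound above gives $M^L_1(M^L_2)^*\to M^\infty_1(M^\infty_2)^*$ in trace class, yielding $\K-\overline{\scat}^g_L\to\K-A\bar P_0\scat^g\bar P_0A^*$ as stated.
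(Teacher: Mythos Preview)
Your Steps 1 and 2 are correct and coincide with the paper's derivation (the paper reaches the same factorization via Baker--Campbell--Hausdorff, arriving at Proposition \ref{prop:wtTheta2} and \eqref{eq:rewr-wtTheta2}). The gap is in Step 3: the factors $M^L_i=A\bar P_0\,\phi^L_i\,\bar P_{g(0)}$ are \emph{not} Hilbert--Schmidt, and the cancellation you invoke does not occur. As $y\to-\infty$ the hitting-time correction \emph{vanishes} rather than cancelling $\K(x,y)$: already in the flat case $g\equiv r$ one has (Example \ref{ex:GOEfromGral}) $\Psi^{g^\pm}=A^*(I-\varrho_r)$, so $M^\infty_1=\K\bar P_r-\K\varrho_r\bar P_r$, and the second piece has kernel $\K(x,2r-y)\uno{y<r}$, which decays super-exponentially as $y\to-\infty$ and therefore cannot compensate $\K\bar P_r$. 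More abstractly, $M^\infty_1(M^\infty_2)^*=A\bar P_0\scat^g\bar P_0A^*$ differs from the non-trace-class projection $\K$ by a trace-class operator (Proposition \ref{prop:OmegaInfty-TrCl}), so it cannot itself be trace class, and hence at least one $M^\infty_i$ fails to be HS. Your triangle bound $\|AB^*-CD^*\|_1\le\|A-C\|_2\|B\|_2+\|C\|_2\|B-D\|_2$ then has infinite factors on the right.

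The paper's remedy is to subtract $\K$ \emph{before} factoring: write $\phi^L_i=A^*-\Phi^L_i$ (with $\Phi^L_i$ the truncated hitting integral alone) and expand $\overline{\scat}^g_L-\K$ into four terms as in \eqref{eq:KOmInfty}. The leading term $\K\bar P_{g(0)}\K-\K=-\K P_{g(0)}\K$ is trace class and $L$-independent. Each of the three remaining terms is factored with an inserted exponential weight $e^{\pm\beta\xi}$: the factor carrying the bare $A^*$ receives $\bar P_{g(0)}e^{\beta\xi}$, which supplies the missing decay as $y\to-\infty$, while the factor carrying $\Phi^L_i$ receives $e^{-\beta\xi}\bar P_{g(0)}$, whose growth is beaten by the intrinsic decay of the hitting probability $\pp_y(\tau_{g^\pm}\le t)$ (controlled via a linear-barrier comparison, see the estimates around \eqref{eq:C-S-pp}). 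It is precisely this separation of the $A^*$-piece and the $\Phi^L_i$-piece into \emph{different} HS factors, each carrying its own weight, that your single-factor decomposition $M^L_i$ cannot accommodate.
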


The fact that $\K-A\bar P_0\scat^{g}\bar P_0A^*$ is a trace class operator mapping
$L^2(\rr)$ to itself is proved in Proposition \ref{prop:OmegaInfty-TrCl} below.

Theorem \ref{thm:hittProb} is a straightforward consequence of Theorem \ref{thm:limL} together
with \eqref{eq:cqr} and the continuity of the Fredholm determinant in the trace class
norm. The rest of this section is thus devoted to the proof of Theorem \ref{thm:limL}.

Let us first proceed formally, using the Baker-Campbell-Hausdorff formula to rewrite
$e^{tH}$ in order to find a more convenient expression for $\overline{\scat}^g_L$. Slightly more
generally, we will work with the operator $e^{-\ell_1H}\K\Theta^g_{\ell_1,\ell_2}e^{\ell_2 H}K_{\Ai}$ for $\ell_1<\ell_2$. Let $e^{t\xi}$
($\xi$ stands for a generic variable) denote the multiplication operator
$(e^{t\xi}f)(x)=e^{tx}f(x)$. Then by the Baker-Campbell-Hausdorff formula we may
(formally) write
\begin{equation}\label{eq:BCH}
  e^{tH}=e^{-t^3/3+t\xi}e^{-t\Delta-t^2\nabla}=e^{-t\Delta+t^2\nabla}e^{-t^3/3+t\xi}
\end{equation} (see the justification of (4.10) in \cite{quastelRem-review} for the formal
computation). Now recalling that $e^{a\nabla}$ acts as a shift
(i.e. $e^{a\nabla}f(x)=f(x+a)$)  we get
\begin{equation}\label{eq:BCHed}
    e^{-\ell_1 H}\Theta^g_{\ell_1,\ell_2}e^{\ell_2H}
    =e^{-\ell_1\Delta}e^{\ell_1^2\nabla}e^{\ell_1^3/3-\ell_1\xi}
    \Theta^{g}_{\ell_1,\ell_2}e^{-\ell_2^3/3+\ell_2\xi}e^{-\ell_2^2\nabla}e^{-\ell_2\Delta}
    =e^{\ell_1\Delta}\wt\Theta^g_{\ell_1,\ell_2}e^{-\ell_2\Delta}
\end{equation}
(where in the last equality we used $e^{a\xi}e^{b\nabla}=e^{-ab}e^{b\nabla}e^{a\xi}$) with
\begin{equation}
  \wt\Theta^{g}_{\ell_1,\ell_2}(x_1,x_2)=p(\ell_2-\ell_1,x_2-x_1+\ell_2^2-\ell_1^2)
  \pp_{\ell_1,x_1;\ell_2,x_2}\!\left(B(t)\leq g(t)\text{ on }[\ell_1,\ell_2]\right).\label{eq:wtTheta}
\end{equation}
To make sense of this formula we need to understand what it means to write $e^{-t\Delta}$
for $t>0$ in our case.
The key point is that we are allowed to consider the operator $e^{-t\Delta}$ for $t>0$ as long as it is applied to $A$ or, in other words, that the function $x\longmapsto\Ai(x)$ lies in the image of the heat kernel.
More generally, the same fact holds for $e^{-t\Delta}$ applied to $e^{\alpha\xi}A$ for any $\alpha\in \rr$, as required in \eqref{eq:tildePsi}.
This fact is well-known in the literature in the case $\alpha=0$, see e.g. \cite[Prop. 1.2]{quastelRemAiry1}.

\begin{prop}\label{prop:mLaplacian}
  Given $\alpha,t\in\rr$ define $\varphi^\alpha_{t}(x)=e^{-2t^3/3+2\alpha t^2-\alpha^2t+x(\alpha-t)}\Ai(x+t^2-2\alpha t)$.  Then for all $s,t>0$ we have $e^{s\Delta}\varphi^\alpha_{t}(x)=\varphi^\alpha_{t-s}(x)$.
  In particular, $e^{t\Delta}\varphi^\alpha_{t}(x)=e^{\alpha x}\tsm\Ai(x)$, and as a consequence the kernels $e^{t\Delta}e^{\alpha\xi}A$ and $A^*e^{\alpha\xi}e^{t\Delta}$ are well defined for every $\alpha,t\in\rr$ via the formula
  \begin{equation}
    e^{t\Delta}e^{\alpha\xi}A(x,\lambda)=A^*e^{\alpha\xi}e^{t\Delta}(\lambda,x)
    =e^{2t^3/3+2\alpha t^2+\alpha^2t+x(t+\alpha)-\lambda t}\Ai(x-\lambda+t^2+2\alpha t)\label{eq:e-tDB0}
  \end{equation}
  and they satisfy the semigroup property in the sense that
  $e^{(s+t)\Delta}e^{\alpha\xi}A=e^{s\Delta}e^{t\Delta}e^{\alpha\xi}A$ and $A^*e^{\alpha\xi}e^{(s+t)\Delta}=A^*e^{\alpha\xi}e^{s\Delta}e^{t\Delta}$ for all $s,t\in\rr$.
\end{prop}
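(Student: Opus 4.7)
The plan is to reduce every assertion to the single identity
\[
e^{s\Delta}\varphi^\alpha_t=\varphi^\alpha_{t-s}\qquad\text{for all }s>0\text{ and all }t\in\rr.
\]
Setting $s=t$ immediately yields $e^{t\Delta}\varphi^\alpha_t=\varphi^\alpha_0=e^{\alpha\xi}\Ai$, while the kernel formulas will then drop out from the trivial observation $(e^{\alpha\xi}A)(y,\lambda)=e^{\alpha\lambda}\varphi^\alpha_0(y-\lambda)$ combined with the translation invariance of the heat semigroup in $y$.

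To establish the key identity I would start from the standard contour representation
\[
\Ai(z)=\frac{1}{2\pi\I}\int_\Gamma e^{w^3/3-wz}\,dw,
\]
where $\Gamma$ runs from $\infty e^{-\I\pi/3}$ to $\infty e^{\I\pi/3}$, and substitute it into the Gaussian convolution $e^{s\Delta}\varphi^\alpha_t(x)=\int(4\pi s)^{-1/2}e^{-(x-y)^2/(4s)}\varphi^\alpha_t(y)\,dy$.  Fubini, justified by the super-exponential decay of $e^{w^3/3}$ along $\Gamma$ together with the Gaussian decay in $y$, allows one to perform the (now purely Gaussian) $y$-integral explicitly.  The exponent of the resulting contour integral is cubic in $w$ with a quadratic term $sw^2$, which is dealt with by the substitution $w\mapsto w-s$; this completes the cube and, since $e^{w^3/3-zw}$ is entire and a routine $ML$-estimate controls the rays at infinity, the shifted contour produces the same value.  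The remaining $w$-integral is exactly $\Ai\!\bigl(x+(t-s)^2-2\alpha(t-s)\bigr)$, and a bookkeeping of the exponential prefactors (from $\varphi^\alpha_t$, from completing the square in $y$, and from the contour shift) shows that they combine to give exactly the prefactor of $\varphi^\alpha_{t-s}(x)$.

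Once the identity is established the rest is automatic.  For the kernel of $e^{t\Delta}e^{\alpha\xi}A$, the identification $(e^{\alpha\xi}A)(y,\lambda)=e^{\alpha\lambda}\varphi^\alpha_0(y-\lambda)$ and translation invariance yield
$(e^{t\Delta}e^{\alpha\xi}A)(x,\lambda)=e^{\alpha\lambda}\varphi^\alpha_{-t}(x-\lambda)$,
and collecting the exponential factors in the definition of $\varphi^\alpha_{-t}$ reproduces \eqref{eq:e-tDB0}.  The formula for $A^*e^{\alpha\xi}e^{t\Delta}$ is the same expression because $A^*e^{\alpha\xi}e^{t\Delta}$ is the adjoint of $e^{t\Delta}e^{\alpha\xi}A$ (the operators $e^{t\Delta}$ and $e^{\alpha\xi}$ being self-adjoint), so its kernel is obtained by swapping the two arguments.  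Finally, the semigroup property is immediate by iterating the identity:
\[
\bigl(e^{s\Delta}e^{t\Delta}e^{\alpha\xi}A\bigr)(x,\lambda)=e^{\alpha\lambda}e^{s\Delta}\varphi^\alpha_{-t}(x-\lambda)=e^{\alpha\lambda}\varphi^\alpha_{-(s+t)}(x-\lambda)=\bigl(e^{(s+t)\Delta}e^{\alpha\xi}A\bigr)(x,\lambda).
\]
The only technical step is the Fubini-plus-contour-shift in the second paragraph; this is routine but constitutes the main obstacle, requiring careful estimates on the contour at infinity.
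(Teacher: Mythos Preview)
Your proposal is correct and follows essentially the same route as the paper. The paper omits the proof of the key identity $e^{s\Delta}\varphi^\alpha_t=\varphi^\alpha_{t-s}$, referring instead to \cite[Prop.~1.2]{quastelRemAiry1} (whose argument is exactly the contour-integral-plus-Gaussian-convolution computation you sketch), and then derives \eqref{eq:e-tDB0} via the same observation $e^{\alpha\xi}A(y,\lambda)=e^{\alpha\lambda}\varphi^\alpha_0(y-\lambda)$ that you use.
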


\begin{proof}
	The proof of the formula $e^{s\Delta}\varphi^\alpha_{t}(x)=\varphi^\alpha_{t-s}(x)$ is completely analogous to the one of Proposition 1.2 in \cite{quastelRemAiry1}, so we omit it.
	To get \eqref{eq:e-tDB0}, note that $e^{\alpha\xi}Ae^{-\alpha\xi}(x,\lambda)=\varphi^\alpha_0(x-\lambda)$, so $e^{t\Delta}e^{\alpha\xi}A(x,\lambda)=\varphi^\alpha_{-t}(x-\lambda)e^{\alpha\lambda}$, whence the formula follows.
\end{proof}

As a consequence of this and \eqref{eq:K-AiryTr}, and since $e^{tH}$ formally commutes with $\K$, we may rewrite \eqref{eq:BCHed} as
\begin{equation}\label{eq:wBCHed}
  e^{-\ell_1 H}\K\Theta^{g}_{\ell_1,\ell_2}e^{\ell_2H}K_{\Ai}
  =A\bar P_0(A^*e^{\ell_1\Delta})\wt\Theta^{g}_{\ell_1,\ell_2}(e^{-\ell_2 \Delta}A)\bar P_0A^*.
\end{equation}
The derivation of this identity has been non-rigorous, but the main point is that the
right hand side of \eqref{eq:wBCHed} is now well defined thanks to Proposition
\ref{prop:mLaplacian}. The identity can be justified rigorously, without appealing to the
Baker-Campbell-Hausdorff formula, by relatively simple integral calculations. We will skip
the details of that justification and instead refer the reader to \cite{qr-airy1to2} where
the argument is carried out in detail for a similar problem (see in particular the proof
of Lemma 2.1 there).

Next we rewrite the probability appearing in \eqref{eq:wtTheta} as follows: assuming $\alpha\in(\ell_1,\ell_2)$
\begin{multline}
  \pp_{\ell_1, x_1;\ell_2, x_2}\!\left(B(t)\leq g(t)\text{ on }[\ell_1,\ell_2]\right)
  = \int_{-\infty}^{g(\alpha)}\pp_{\ell_1, x_1; \ell_2, x_2} ( B(\alpha)\in dy)\\
  \times \pp_{\ell_1, x_1; \alpha, y} (B(t)<g(t)\text{ on }[\ell_1,\alpha])\pp_{\alpha, y;\ell_2, x_2}(
  B(t)<g(t)\text{ on } [\alpha,\ell_2]).
\end{multline}
Recalling the definitions of $g^+$ and $\tau_g$ in \eqref{eq:defTaug} and \eqref{eq:gplusminus} and of $g_\alpha(t)=g(t+\alpha)$, the last probability in the above integral can be rewritten as
\begin{equation}
  \pp_{0, y;\ell_2-\alpha, x_2}(B(t)<g_\alpha(t)\text{ on } [0,\ell_2-\alpha])
  =1-\int_0^{\ell_2-\alpha}\pp_y(\tau_{g_\alpha^+}\in dt)\tfrac{p(\ell_2-\alpha-t,x_2-g_\alpha(t))}{p(\ell_2-\alpha,x_2-y)}.
\end{equation}
A similar identity can be written for $\pp_{\ell_1, x_1; \alpha, y}(B(t)<g(t)\text{ on }[\ell_1,\alpha])$, now using $\tau_{g_a^-}$, which we take to be independent of $\tau_{g_\alpha^+}$, and going backwards from time $\alpha$ to time
$\ell_1$.
Using this and writing $\pp_{\ell_1, x_1; \ell_2, x_2}(B(\alpha)\in dy)$ explicitly we find that
\begin{align}
  \pp_{\ell_1, x_1;\ell_2, x_2}\!\left(B(t)<g(t)\text{ on }[\ell_1,\ell_2]\right)
  &=\int_{-\infty}^{g(\alpha)}dy\sqrt{\tfrac{\ell_2-\ell_1}{4\pi(a-\ell_1)(\ell_2-\alpha)}}
  e^{-\frac{((\ell_2-\alpha)x_1+(\alpha-\ell_1)x_2+(\ell_1-\ell_2)y)^2}{4(\alpha-\ell_1)(\ell_2-\alpha)(\ell_2-\ell_1)}}\\
  &\hspace{0.35in}\times\left(1-\int_0^{\alpha-\ell_1}\pp_y(\tau_{g_\alpha^-}\in dt_1)\tfrac{p(
      \alpha-\ell_1 - t_1, x_1-g_\alpha(-t_1))}{ p(\alpha-\ell_1 , x_1-y) }\right)\\
  &\hspace{0.35in}\times\left(1-\int_{0}^{\ell_2-\alpha}\pp_y(\tau_{g_\alpha^+}\in dt_2)\tfrac{
      p(\ell_2-\alpha - t_2, x_2-g_\alpha(t_2))}{p(\ell_2-\alpha , x_2-y) } \right)
\end{align}
Recalling from \eqref{eq:wtTheta} that in the formula for
$\wt\Theta^g_{\ell_1,\ell_2}(x_2-x_1)$ this probability is premultiplied by
$p(\ell_2-\ell_1,x_2-x_1+\ell_2^2-\ell_1^2)$ and observing that
\[\tfrac{p(\ell_2-\ell_1,x_2-x_1+\ell_2^2-\ell_1^2)}{p(a-\ell_1,x_1-y)p(\ell_2-a,x_2-y)}\sqrt{\tfrac{\ell_2-\ell_1}{4\pi(a-\ell_1)(\ell_2-a)}}
e^{-\frac{((\ell_2-a)x_1+(a-\ell_1)x_2+(\ell_1-\ell_2)y)^2}{4(a-\ell_1)(\ell_2-a)(\ell_2-\ell_1)}}
=e^{\frac14(\ell_1^2-\ell_2^2+2x_1-2x_2)(\ell_1+\ell_2)}\] we deduce that
\begin{multline}
  \wt\Theta^g_{\ell_1,\ell_2}(x_1,x_2)=e^{\frac14(\ell_1^2-\ell_2^2+2x_1-2x_2)(\ell_1+\ell_2)}\\
  \times\int_{-\infty}^{g(\alpha)}dy\left[p(\alpha-\ell_1 , x_1-y)-\int_0^{\alpha-\ell_1}dt_1\,\pp_{y}(\tau_{g_\alpha^-}\in dt_1)p(\alpha-\ell_1 - t_1, x_1-g_\alpha(-t_1))\right]\\
  \times\left[p(\ell_2-\alpha , x_2-y)-\int_{0}^{\ell_2-\alpha}dt_2\,\pp_{y}(\tau_{g_\alpha^+}\in
    dt_2)p(\ell_2 -\alpha- t_2, x_2-g_\alpha(t_2))\right].
\end{multline}
At this point we go back to the case $-\ell_1=\ell_2=L$, for which the last identity yields the following (note that the exponential prefactor above vanishes in this case):
\begin{prop}\label{prop:wtTheta2}
  For any $\alpha\in(-L,L)$ we have
  \begin{multline}
    A^*e^{-L\Delta}\wt\Theta^g_{-L,L}e^{-L\Delta}A(\lambda_1,\lambda_2)\\
    =\int_{-\infty}^{g(\alpha)}dy\left[A^*e^{\alpha \Delta}(\lambda_1,y)-\int_{0}^{\alpha+L}dt_1\,\pp_y(\tau_{g_\alpha^-}\in
      dt_1)A^*e^{(\alpha-t_1)\Delta}(\lambda_1,g_\alpha(-t_1))\right]\\\times
    \left[e^{-\alpha \Delta}A(y,\lambda_2)-\int_{0}^{L-\alpha}dt_2\,\pp_y(\tau_{g_\alpha^+}\in
      dt_2)e^{-(\alpha+t_2)\Delta}A(g_\alpha(t_2),\lambda_2)\right].
  \end{multline}
\end{prop}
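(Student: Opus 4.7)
The plan is to take the explicit expression for $\wt\Theta^g_{-L,L}(x_1,x_2)$ that was derived in the paragraph immediately preceding the statement of the proposition, namely
\begin{multline*}
\wt\Theta^g_{-L,L}(x_1,x_2)=\int_{-\infty}^{g(\alpha)}dy\left[p(\alpha+L,x_1-y)-\int_0^{\alpha+L}\pp_y(\tau_{g_\alpha^-}\in dt_1)\,p(\alpha+L-t_1,x_1-g_\alpha(-t_1))\right]\\
\times\left[p(L-\alpha,x_2-y)-\int_0^{L-\alpha}\pp_y(\tau_{g_\alpha^+}\in dt_2)\,p(L-\alpha-t_2,x_2-g_\alpha(t_2))\right],
\end{multline*}
(where we have used that the exponential prefactor $e^{\frac14(\ell_1^2-\ell_2^2+2x_1-2x_2)(\ell_1+\ell_2)}$ vanishes when $\ell_1+\ell_2=0$) and then compute directly the kernel $A^*e^{-L\Delta}\wt\Theta^g_{-L,L}e^{-L\Delta}A(\lambda_1,\lambda_2)$.

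The first key observation is that $p(t,x-y)$ is precisely the kernel of the heat semigroup $e^{t\Delta}$, so each factor in the formula above is of the form $e^{s\Delta}(x,z)$ for some $s>0$ and some $z$ (either $y$ or $g_\alpha(\mp t_i)$). Next, I would expand the bracketed product and, for each of the four resulting terms, apply Fubini and push the outer operators $A^*e^{-L\Delta}$ and $e^{-L\Delta}A$ inside the $y$- and $t_i$-integrals. Each composition of the form $A^*e^{-L\Delta}\cdot e^{(\alpha+L)\Delta}$ or $A^*e^{-L\Delta}\cdot e^{(\alpha+L-t_1)\Delta}$ (and their mirror images on the right) is well-defined as a kernel, and by the semigroup property in Proposition \ref{prop:mLaplacian} collapses into $A^*e^{\alpha\Delta}$ or $A^*e^{(\alpha-t_1)\Delta}$, respectively. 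Collecting these four contributions gives exactly the factored form
\[\int_{-\infty}^{g(\alpha)}dy\,\bigl[A^*e^{\alpha\Delta}(\lambda_1,y)-\cdots\bigr]\bigl[e^{-\alpha\Delta}A(y,\lambda_2)-\cdots\bigr]\]
stated in the proposition.

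The main obstacle is the rigorous justification for moving $A^*e^{-L\Delta}$ and $e^{-L\Delta}A$ inside the integrals, since $e^{-L\Delta}$ has no meaning as a stand-alone operator on $L^2(\rr)$ — it only makes sense when composed with $A$ (cf.\ Proposition \ref{prop:mLaplacian} and the discussion after \eqref{eq:BCHed}). To handle this, I would verify the pointwise identity $\int dx_1\,A^*(\lambda_1,x_1)e^{-L\Delta}\bigl[\cdots\bigr](x_1)=\int dx_1\,e^{-L\Delta}A(x_1,\lambda_1)\cdot\bigl[\cdots\bigr](x_1)$ only after first checking absolute convergence of the resulting double integrals in $(x_1,y)$ and $(x_1,t_1)$. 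This convergence uses the Gaussian decay of $\Ai$ (via the closed form \eqref{eq:e-tDB0} for $A^*e^{s\Delta}$), the bound $g(t)\geq c-\kappa t^2$ with $\kappa<3/4$ to control the exponent $-(g_\alpha(\pm t_i)-\lambda_i)(L\mp\alpha\mp t_i)+(L\mp\alpha\mp t_i)^3/3$ arising from $A^*e^{(L\mp\alpha\mp t_i)\Delta}(\lambda_i,g_\alpha(\mp t_i))$, and the fact that $\pp_y(\tau_{g_\alpha^\pm}\in dt)$ integrates to at most $1$. With these bounds in hand, Fubini legitimizes all the exchanges of integration order and the identity follows; this is analogous to the justification carried out in detail in \cite{qr-airy1to2}, to which the reader would be referred for the finer estimates.
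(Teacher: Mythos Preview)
Your proposal is correct and follows exactly the approach of the paper, which in fact presents this proposition as an immediate consequence of the preceding derivation of $\wt\Theta^g_{\ell_1,\ell_2}$ (specialized to $-\ell_1=\ell_2=L$, so the exponential prefactor disappears) together with the semigroup identity from Proposition~\ref{prop:mLaplacian}, without writing out a separate proof. One minor remark: since the $t_i$-integrals here run only over the bounded intervals $[0,\alpha+L]$ and $[0,L-\alpha]$, the growth hypothesis $g(t)\geq c-\kappa t^2$ is not actually needed at this stage for the Fubini justification---it only enters later in the trace-class estimates as $L\to\infty$.
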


Let us rewrite the last identity as follows:
\begin{equation}
  \label{eq:rewr-wtTheta2}
  A^*e^{-L\Delta}\wt\Theta^g_{-L,L}e^{-L\Delta}A=\Psi^{g_\alpha^-,-\alpha}_L\bar P_{g(\alpha)}(\Psi^{g_\alpha^+,\alpha}_L)^*
\end{equation}
with
\begin{equation}
  \label{eq:rewr-wtTheta2-Omega}
    \Psi^{g,\alpha}_L(\lambda,y)=e^{-\alpha \Delta}A^*(\lambda,y)-\int_{0}^{L-\alpha}dt\,\pp_y(\tau_{g}\in
      dt)A^*e^{-(\alpha+t)\Delta}(\lambda,g(t))
\end{equation}
The pointwise limit as $L\to\infty$ of \eqref{eq:rewr-wtTheta2-Omega} is straightforward to compute, and gives $\Psi^g_\alpha$.
Therefore, in view of \eqref{eq:defScatgAlpha}, the right hand side of \eqref{eq:rewr-wtTheta2} converges pointwise to $\scat^{g}$.
In particular, since the left hand side of \eqref{eq:rewr-wtTheta2} does not depend on $\alpha$, we have proved the following

\begin{prop}\label{prop:alphaIndep}
The kernel $\Psi^{g^-_\alpha}_{-\alpha}\ts\bar P_{g(\alpha)}\ts(\Psi^{g^+_\alpha}_\alpha)^*$ does not depend on $\alpha$.
\end{prop}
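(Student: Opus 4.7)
The plan is to observe that Proposition \ref{prop:alphaIndep} is not really an independent claim but a direct by-product of Proposition \ref{prop:wtTheta2} combined with the $L\to\infty$ limit. For each $L$ and each $\alpha\in(-L,L)$, Proposition \ref{prop:wtTheta2} gives the identity
\begin{equation}
A^*e^{-L\Delta}\wt\Theta^g_{-L,L}e^{-L\Delta}A=\Psi^{g_\alpha^-,-\alpha}_L\ts\bar P_{g(\alpha)}\ts(\Psi^{g_\alpha^+,\alpha}_L)^*,
\end{equation}
with $\Psi^{g,\alpha}_L$ defined by \eqref{eq:rewr-wtTheta2-Omega}. The left-hand side depends only on $L$; in particular, for any $\alpha_1,\alpha_2\in(-L,L)$ the two corresponding compositions on the right must agree. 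The strategy is therefore to pass this $L$-dependent identity to the limit $L\to\infty$, where the finite-$L$ kernel $\Psi^{g,\alpha}_L$ becomes $\Psi^g_\alpha$ and the composition becomes $\scat^g$ as written in \eqref{eq:defScatgAlpha}.

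The concrete steps I would carry out are as follows. First, observe that the only difference between $\Psi^{g,\alpha}_L$ and $\Psi^g_\alpha$ is that the integral against $\pp_y(\tau_{g_\alpha}\in dt)$ is truncated at $L-\alpha$ in the former and runs to $\infty$ in the latter, so pointwise convergence $\Psi^{g,\alpha}_L(\lambda,y)\to\Psi^g_\alpha(\lambda,y)$ as $L\to\infty$ reduces to checking that the tail $\int_{L-\alpha}^\infty\pp_y(\tau_{g_\alpha}\in dt)\,A^*e^{-(\alpha+t)\Delta}(\lambda,g_\alpha(t))$ vanishes. Using the explicit formula \eqref{eq:e-tDB0}, the integrand carries a factor $e^{-2(\alpha+t)^3/3}\Ai(g_\alpha(t)-\lambda+(\alpha+t)^2)$, which under the hypothesis $g(t)\geq c-\kappa t^2$ with $\kappa<3/4$ decays super-exponentially in $t$ thanks to the Airy asymptotics (the argument of $\Ai$ grows like $(1-\kappa)t^2$), uniformly in $y\in(-\infty,g(\alpha))$. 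Second, insert this pointwise limit into the $y$-integral representation of the composition and invoke dominated convergence on $(-\infty,g(\alpha))$ to deduce
\begin{equation}
\Psi^{g_\alpha^-,-\alpha}_L\ts\bar P_{g(\alpha)}\ts(\Psi^{g_\alpha^+,\alpha}_L)^*(\lambda_1,\lambda_2)\xrightarrow[L\to\infty]{}\Psi^{g^-_\alpha}_{-\alpha}\ts\bar P_{g(\alpha)}\ts(\Psi^{g^+_\alpha}_\alpha)^*(\lambda_1,\lambda_2).
\end{equation}
Applying this to two values $\alpha_1,\alpha_2$ and equating the two $L$-independent limits produces the claim.

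The only nontrivial point is the dominated convergence step: one needs a uniform-in-$L$ integrable envelope for $\Psi^{g_\alpha^\pm,\mp\alpha}_L(\lambda_i,y)$ in $y$ over $(-\infty,g(\alpha))$. The necessary estimate is of exactly the same flavour as those required for Theorem \ref{thm:limL} and Proposition \ref{prop:OmegaInfty-TrCl}, and the technical condition $\kappa<3/4$ is precisely what guarantees convergence of the mixed Gaussian/Airy integrals involved; no new analytic input beyond what is used for Theorem \ref{thm:limL} should be needed. Once this envelope is in place the argument reduces to the short formal manipulation above, which is why the author presents the proposition as an immediate consequence of the computation leading into \eqref{eq:rewr-wtTheta2}.
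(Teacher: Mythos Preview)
Your approach is essentially identical to the paper's: the proposition is stated as an immediate consequence of the fact that the left-hand side of \eqref{eq:rewr-wtTheta2} does not depend on $\alpha$, together with the pointwise convergence of $\Psi^{g,\alpha}_L$ to $\Psi^g_\alpha$ as $L\to\infty$. The paper is actually terser than you are---it simply asserts the pointwise convergence of the right-hand side without spelling out the dominated-convergence justification for the $y$-integral---so your additional care about the integrable envelope is a reasonable elaboration rather than a departure.
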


This shows that, as claimed, the definition of $\scat^g$ through \eqref{eq:defScatgAlpha} does not depend on $\alpha$.

Our next goal is to upgrade the pointwise convergence of the right hand side of \eqref{eq:rewr-wtTheta2} to $\scat^g$ to trace class convergence of $\overline{\scat}^{g}_L$ (under suitable conditions on $g$), which by \eqref{eq:barScatL}, \eqref{eq:wBCHed} and \eqref{eq:rewr-wtTheta2} satisfies
\[\overline{\scat}^{g}_L=A\bar P_{0}\Psi^{g_\alpha^-,-\alpha}_L\bar P_{g(\alpha)}(\Psi^{g_\alpha^+,\alpha}_L)^*\bar P_{0}A^*.\]
We will first prove the following result, which already contains all the necessary ideas.

\begin{prop}\label{prop:OmegaInfty-TrCl}
  For any $g\in H^1_{\rm ext}(\rr)$ such that $g(t)\geq c-\kappa t^2$ for some $c\in\rr$ and $\kappa\in(0,\frac34)$, $\K-A\bar P_{0}\scat^{g}\bar P_{0}A^*$ is a trace class operator.
\end{prop}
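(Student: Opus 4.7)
The plan is to decompose $\K - A\bar P_0 \scat^{g} \bar P_0 A^*$ into a small number of pieces that can each be shown, by direct estimation, to be trace class. Write $\Psi^g = A^* - V^g$ where
\[V^g(\lambda,y) = \int_0^\infty \pp_y(\tau_g \in dt)\, e^{-t\Delta}A^*(\lambda, g(t)),\]
and expand $\scat^g = (A^* - V^{g^-})\bar P_{g(0)}(A - (V^{g^+})^*)$. Using $A^*A = I$ (so that $A\bar P_0 A^*\bar P_{g(0)}A\bar P_0 A^* = \K\bar P_{g(0)}\K$) together with $\K - \K\bar P_{g(0)}\K = \K P_{g(0)}\K$, we obtain
\begin{multline*}
\K - A\bar P_0 \scat^g \bar P_0 A^* = \K P_{g(0)}\K + A\bar P_0 V^{g^-}\bar P_{g(0)}\K \\ + \K\bar P_{g(0)}(V^{g^+})^*\bar P_0 A^* - A\bar P_0 V^{g^-}\bar P_{g(0)}(V^{g^+})^*\bar P_0 A^*,
\end{multline*}
reducing the problem to showing each of the four terms is trace class.

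The first term is handled in the standard way: factor as $\K P_{g(0)}\K = (\K P_{g(0)})(P_{g(0)}\K)$, each of which is Hilbert--Schmidt since $\|\K P_{g(0)}\|_{\rm HS}^2 = \int_{g(0)}^\infty \K(y,y)\,dy < \infty$ by the super-exponential decay of $\K(y,y)$ for $y\to+\infty$. For the remaining three terms, my plan is to use the kernel-level factorization $V^g = \mathcal{U}^g \mathcal{R}^g$, where $\mathcal{U}^g\!:L^2(\mathbb{R}_+, dt) \longrightarrow L^2(\mathbb{R}, d\lambda)$ has kernel $\mathcal{U}^g(\lambda, t) = e^{-t\Delta}A^*(\lambda, g(t))$ given explicitly by Proposition \ref{prop:mLaplacian}, and $\mathcal{R}^g\!:L^2(\mathbb{R}, dy) \longrightarrow L^2(\mathbb{R}_+, dt)$ has as kernel the Brownian hitting-time density $\mathcal{R}^g(t,y) = \pp_y(\tau_g\in dt)/dt$. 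Then, for instance,
\[A\bar P_0 V^{g^-}\bar P_{g(0)}\K = (A\bar P_0 \mathcal{U}^{g^-})\,(\mathcal{R}^{g^-}\bar P_{g(0)}\K),\]
and analogously for the adjoint term and the quartic term (the latter by inserting both factorizations between the $\bar P_{g(0)}$), so it suffices to verify that each factor is Hilbert--Schmidt.

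The main obstacle is bounding $\|A\bar P_0 \mathcal{U}^{g^-}\|_{\rm HS}^2 = \int_0^\infty dt\int dx\,\bigl|A\bar P_0 e^{-t\Delta}A^*(x, g^-(t))\bigr|^2$. Using Proposition \ref{prop:mLaplacian} the integrand is a delicate balance between the exponential growth $e^{2t^3/3+(g^-(t)-\lambda)t}$ coming from the action of $e^{-t\Delta}$ on $A^*$ and the Airy decay $|\Ai(g^-(t)-\lambda+t^2)| \lesssim e^{-\frac{2}{3}(g^-(t)-\lambda+t^2)_+^{3/2}}$. Under the hypothesis $g(t)\geq c-\kappa t^2$, a steepest-descent analysis of the $\lambda$-integral shows that the resulting net exponent in $t$ is strictly negative precisely when $\kappa < 3/4$, yielding the desired integrability in $t$; this is exactly where the technical restriction in the statement enters, and for $\kappa\geq 3/4$ a refined argument would be required. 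The companion bound $\|\mathcal{R}^{g^-}\bar P_{g(0)}\K\|_{\rm HS}^2 < \infty$ is much easier, following from the $e^{-ct^3}$-type tail decay of the Brownian hitting-time density to a curve bounded below by $c-\kappa t^2$, combined with the super-exponential decay of $\K(y, x_1)$ for large positive $y$.
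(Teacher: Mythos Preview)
Your decomposition into four pieces is the same as the paper's, and your treatment of the first term $\K P_{g(0)}\K$ is fine. The genuine problem is with your factorization $V^g=\mathcal{U}^g\mathcal{R}^g$ through $L^2(\mathbb{R}_+,dt)$: the factor $\mathcal{R}^{g^-}\bar P_{g(0)}\K$ is \emph{not} Hilbert--Schmidt. The difficulty is not at large $t$ or large $|y|$ (where your heuristics are correct) but at small $t$ with $y$ near $g(0)$. Near $t=0$ the curve looks flat, so $\mathcal{R}^{g^-}(t,y)\approx\frac{g(0)-y}{2\sqrt{\pi}\,t^{3/2}}e^{-(g(0)-y)^2/(4t)}$, and a short computation gives
\[
\int_{-\infty}^{g(0)}\mathcal{R}^{g^-}(t,y)\,\K(y,x)\,dy\;\sim\;\frac{\K(g(0),x)}{\sqrt{\pi t}}\qquad\text{as }t\to0^+,
\]
since the density has mass $\sim(\pi t)^{-1/2}$ concentrated near $y=g(0)$. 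Squaring and integrating in $x$ leaves $\K(g(0),g(0))/(\pi t)$, whose $dt$-integral near $0$ diverges. Equivalently, $\int_0^\infty\mathcal{R}^{g^-}(t,y_1)\mathcal{R}^{g^-}(t,y_2)\,dt\sim\frac{4a_1a_2}{\pi(a_1^2+a_2^2)^2}$ with $a_i=g(0)-y_i$, which is not integrable over a neighborhood of $(a_1,a_2)=(0,0)$; the factor $\K(y_1,y_2)$ is bounded away from zero there and cannot help. The same obstruction hits the middle factor $\mathcal{R}^{g^-}\bar P_{g(0)}(\mathcal{R}^{g^+})^*$ in your treatment of the quartic term.

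The paper avoids this by \emph{not} separating the $t$-integration from the spatial variables. It writes the mixed term as $L_1L_2$ with $L_1=A\bar P_0\Phi^{g^-}e^{-\beta\xi}\bar P_{g(0)}$ and $L_2=\bar P_{g(0)}e^{\beta\xi}A\bar P_0A^*$, i.e.\ it inserts an exponential weight $e^{\mp\beta y}$ in the spatial variable $y$ and keeps the full kernel $\Phi^g(\lambda,y)=\int_0^\infty\pp_y(\tau_g\in dt)\,e^{-t\Delta}A^*(\lambda,g(t))$ intact inside $L_1$. Because the hitting measure $\pp_y(\tau\in dt)$ is integrated against a bounded function of $t$ \emph{before} taking the $L^2$-norm, the small-$t$ singularity never appears: for $y$ near $g(0)$ one simply has $\Phi^{g^-}(\lambda,y)\to\Ai(g(0)-\lambda)$, which is perfectly square-integrable. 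The weight $e^{-\beta y}$ is there to control the opposite regime $y\to-\infty$, where the paper bounds $\pp_y(\tau\leq t)$ via comparison with a linear barrier. Your factor $A\bar P_0\,\mathcal{U}^{g^-}$ is indeed Hilbert--Schmidt for $\kappa<3/4$ (and your steepest-descent heuristic correctly identifies the threshold), but this does not rescue the argument, since the companion factor fails.
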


\begin{proof}
  Choose $\alpha$ so that $g(\alpha)<\infty$ (the result is trivial if $g\equiv\infty$) and write $\Psi^g_\alpha$ as 
  \[\Psi^g_\alpha=e^{-\alpha \Delta}A^*-\Phi^g_\alpha.\]
  Then, since $\K=A\bar P_{0}A^*$,
  \begin{multline}\label{eq:KOmInfty}
	A\bar P_{0}\scat^{g}\bar P_{0}A^*-\K
	=\left[A\bar P_{0}A^*e^{\alpha \Delta}\bar P_{g(\alpha)}e^{-\alpha \Delta}A\bar P_{0}A^*-A\bar P_{0}A^*\right]\\
	-A\bar P_{0}A^*e^{\alpha \Delta}\bar P_{g(\alpha)}(\Phi^{g_\alpha^+}_\alpha)^*\bar P_{0}A^*
	-A\bar P_{0}\Phi^{g_\alpha^-}_{-\alpha}\bar P_{g(\alpha)}e^{-\alpha \Delta}A\bar P_{0}A^*\\
	+A\bar P_{0}\Phi^{g_\alpha^-}_{-\alpha}\bar P_{g(\alpha)}(\Phi^{g_\alpha^+}_\alpha)^*\bar P_{0}A^*.
  \end{multline}
  Since $A\bar P_{0}A^*=A\bar P_{0}A^*e^{\alpha \Delta}e^{-\alpha \Delta}A\bar P_{0}A^*$, the first operator on the right hand side equals $-A\bar P_{0}A^*e^{\alpha \Delta}P_{g(\alpha)}e^{-\alpha \Delta}A\bar P_{0}A^*$, and therefore it is trace class because $A\bar P_0A^*e^{\alpha \Delta}P_m$ is Hilbert-Schmidt for any $\alpha,m\in\rr$:
  using the fact that
  \begin{equation}\label{eq:airyBd}
    |\!\Ai(x)|\leq c\,e^{-\frac23x^{3/2}}\text{ for }x\geq0,\qquad |\!\Ai(x)|\leq c\text{ for }x<0
  \end{equation}
  for some $c>0$, we have (using $AA^*=I$ again)
  \begin{equation}\label{eq:aboveas}
    \begin{split}
      &\|A\bar P_0A^*e^{\alpha \Delta}P_m\|^2_2=\int_{-\infty}^\infty dx_1\int_m^\infty dx_2\,[A\bar P_0A^*e^{\alpha \Delta}(x_1,x_2)]^2\\
      &\quad~~=\int_{-\infty}^\infty \!\!dx_1\int_m^\infty \!\!dx_2\int_{-\infty}^0\!\! d\lambda_1\int_{-\infty}^0 \!\!d\lambda_2\Ai(x_1-\lambda_1)e^{2\alpha^3/3+\alpha(x_2-\lambda_1)}\tsm\Ai(x_2-\lambda_1+\alpha^2)\\
      &\hspace{2in}\times\Ai(x_1-\lambda_2)e^{2\alpha^3/3+\alpha(x_2-\lambda_2)}\tsm\Ai(x_2-\lambda_2+\alpha^2)\\
      &\quad~~=\int_m^\infty dx_2\int_{-\infty}^{0} d\lambda_2\, e^{4\alpha^3/3+2\alpha(x_2-\lambda_2)}\tsm\Ai(x_2-\lambda_2)^2<\infty.
    \end{split}
  \end{equation}
  Hence we need to show that each of the remaining three operators on the right hand side of
  \eqref{eq:KOmInfty} are trace class.

  Fix $\beta>|\alpha|$. We will focus on the third term on the right hand side of \eqref{eq:KOmInfty}, the remaining two can be handled in exactly the same way (the fourth one is slightly simpler because $\beta$ can be taken to be 0). 
  We write this operator as $L_1L_2$ with
  \[L_1=A\bar P_0\Phi^{g_\alpha^-}_{-\alpha}e^{-\beta\xi}\bar P_{g(\alpha)}
  \qqand L_2=\bar P_{g(\alpha)}e^{\beta\xi}e^{-\alpha \Delta}A\bar P_0A^*\]
  so that it suffices to see that both $L_1$ and $L_2$ are Hilbert-Schmidt.
  To check this for the second factor we proceed as in \eqref{eq:aboveas}:
  \begin{multline}
  \|L_2\|^2_2
    =\int_{-\infty}^{g(\alpha)}dx\int_{-\infty}^0\!\!d\lambda\,e^{2\beta x-4\alpha^3/3-2\alpha(x-\lambda)}\tsm\Ai(x-\lambda+\alpha^2)^2\\
    \leq\int_{-\infty}^0\!\!d\lambda\,e^{2\beta \lambda}\int_{-\infty}^\infty dx\,e^{2(\beta-\alpha)x}\Ai(x+\alpha^2)^2,
  \end{multline}
  which is finite by \eqref{eq:airyBd} and the fact that $\beta>\alpha$.
  On the other hand, $\|L_1\|^2_2$ equals
  \begin{equation}
	  \begin{split}
      \int_{-\infty}^{g(\alpha)}dx&\int_{-\infty}^0\!\!d\lambda
      \Bigg[\int_0^\infty\pp_x(\tau_{g_\alpha^-}\in dt)e^{-\frac23(\alpha+t)^3-(g^-_\alpha(t)-\lambda)(\alpha+t)-\beta x}\Ai(g^-_\alpha(t)-\lambda+(\alpha+t)^2)\Bigg]^2\\
      &=\int_{-\infty}^{g(\alpha)}dx \int_{-\infty}^0\!\!d\lambda \Bigg[\int_0^\infty dt\,\pp_x(\tau_{g_\alpha^-}\leq t)\\
      &\hspace{1.3in}\times\p_t\big(e^{-\frac23(\alpha+t)^3-(g^-_\alpha(t)-\lambda)(\alpha+t)-\beta x}\Ai(g^-_\alpha(t)-\lambda+(\alpha+t)^2)\big)\Bigg]^2
     \end{split}
  \end{equation}
  where in the integration by parts the boundary term at $t=\infty$ is zero thanks to
  \eqref{eq:airyBd} and the assumption $g(t)\geq c-\kappa t^2$ with $\kappa\in(0,\frac34)$: in fact, for large $t$,
  \begin{align}\label{eq:just-ibp}
    &\pp_x(\tau_{g_a^-}\leq t)e^{-\frac23(\alpha+t)^3-(g^-_\alpha(t)-\lambda)(\alpha+t)-\beta x}\Ai(g^-_\alpha(t)-\lambda+(\alpha+t)^2)\\
    &~~\leq c\ts e^{-(\frac23-\kappa)(\alpha+t)^3-\frac23[(\alpha+t)^2-\kappa (\alpha+t)^2]^{3/2}+\mathcal{O}(\alpha+t)}
    \leq c\ts e^{-(\frac23-\kappa)(\alpha+t)^3-\frac23(1-\kappa)^{3/2}(\alpha+t)^3+\mathcal{O}(\alpha+t)}
  \end{align}
  for some $c>0$ (bounding the probability by 1), which goes to 0 as $t\to\infty$ for $\kappa<\frac34$.
  Using the Cauchy-Schwarz inequality we obtain
  \begin{multline}
    \|L_1\|_2^2\leq\int_{-\infty}^{g(\alpha)}dx\int_0^\infty\!\! dt\,\pp_x(\tau_{g_a^-}\leq t)^2e^{-2\beta x}e^{-2\beta't^2}\\
    \times\int_{-\infty}^0\!\!\!d\lambda\int_0^\infty\!\! dt\,
    \Big[e^{\beta't^2}\p_t\big(e^{-\frac23(\alpha+t)^3-(g^-_\alpha(t)-\lambda)(\alpha+t)}\Ai(g^-_\alpha(t)-\lambda+(\alpha+t)^2)\big)\Big]^2,\label{eq:C-S-pp}
  \end{multline}
  where $\beta'>0$ will be chosen later. 

  In order to estimate the first integral in \eqref{eq:C-S-pp} we introduce the line $r(s)=c-\kappa
  \alpha^2-\kappa(t+2\alpha)s$, which goes between $(0,c-\kappa \alpha^2)$ and
  $(t,c-\kappa(t+\alpha)^2)$. The piece of the integral with $x\in[c-\kappa
  \alpha^2,g(\alpha)]$ is clearly finite, so it remains to bound the piece with $x<c-\kappa \alpha^2$.
  Since $g_\alpha^-(s)\geq c-\kappa(s+\alpha)^2$, $r(s)$ lies below $g_\alpha^-(s)$ for $s\in[0,t]$ and
  thus we have, for $x<c-\kappa \alpha^2$, that
  \begin{align}
    \pp_x(\tau_{g_\alpha^-}\leq t)&=\pp_x(B(s)>g_\alpha^-(s)\text{ some }s\in[0,t])
    \leq\pp_x(B(s)>r(s)\text{ some }s\in[0,t])\\
    &=\int_0^t ds\,\tfrac{c-\kappa \alpha^2-x}{\sqrt{4\pi}s^{3/2}}e^{-\frac{(c-\kappa
        \alpha^2-x-\kappa(t+2\alpha)s)^2}{4s}}
  \end{align}
  where
  the last formula is standard (see e.g. (2.0.2) in Section 2 of \cite{handbookBM}).
  Using this we get
  \begin{align}
    &\int_{-\infty}^{c-\kappa \alpha^2}dx\int_0^\infty\!\! dt\,\pp_x(\tau_{g_a^-}\leq t)^2e^{-2\beta x}e^{-2\beta't^2}\\
    &\qquad\leq \int_0^\infty dt\int_{-\infty}^{c-\kappa \alpha^2}dx\left[\int_0^t ds\,\tfrac{c-\kappa
        \alpha^2-x}{\sqrt{4\pi t}s^{3/2}}e^{-\frac{(c-\kappa
          \alpha^2-x-\kappa(t+2\alpha)s)^2}{4s}}\right]^2e^{-2\beta x-2\beta't^2}\\
    &\qquad\leq\int_0^\infty dt\int_{-\infty}^{0}dx\left(\int_0^t ds\,s^{-2/3}\right)\left(
      \int_0^t ds\,\tfrac{s^{2/3}x^2}{4\pi ts^{3}}e^{-\frac{(x+\kappa(t+2\alpha)s)^2}{2s}}e^{-2\beta x-2\beta't^2-2\beta(c-\kappa \alpha^2)}\right),
  \end{align}
  where in the last line we used Cauchy-Schwarz again and shifted $x$ by $c-\kappa \alpha^2$.
  Computing the first $s$ integral and then changing variables $x\mapsto sx$ in the remaining multiple integral, this becomes
  \[3\int_0^\infty dt\,t^{1/3}\int_0^t ds\int_{-\infty}^{0}dx\,\tfrac{s^{2/3}x^2}{4\pi t}e^{-\frac{(x+\kappa(t+2\alpha))^2s}{2}}e^{-2\beta xs-2\beta't^2-2\beta(c-\kappa \alpha^2)},\]
  and then changing $s\mapsto ts$ we get
  \[3e^{-2\beta(c-\kappa \alpha^2)}\int_0^\infty dt\int_{-\infty}^{0}dx\int_0^1 ds\,\tfrac{ts^{2/3}x^2}{4\pi}e^{-\frac{(x+\kappa(t+2\alpha))^2}{2}st}e^{-2\beta xst-2\beta't^2}.\]
  If we extend the $x$ integral to all $\rr$ we obtain an upper bound, which is just a Gaussian integral in $x$ and can be computed explicitly, yielding that the above expression is bounded by
  \[c_1\ts e^{-2\beta(c-\kappa \alpha^2)}\int_0^\infty dt\int_0^1 ds\,\ts t^{-1/2}s^{-5/6}(1+st(2\beta+(t+2 \alpha)\kappa)^2)e^{-2(\beta'-\beta \kappa s)t^2+c_2 s t}\]
  for some $c_1,c_2>0$.
  This expression is clearly finite as long as we choose $\beta'>\kappa \beta$ (which of course we may), which
  shows that the first double integral in \eqref{eq:C-S-pp} is finite. 
  The other integral is simpler to estimate. Since $\Ai'$ satisfies the same estimate
  \eqref{eq:airyBd}, only with some minor polynomial corrections, we have (similarly to
  \eqref{eq:just-ibp}), for some $c>0$ (recall here $\lambda\leq0$),
  \begin{align}
    &\left[e^{\beta't^2}\p_t\big(e^{-\frac23(\alpha+t)^3-(g^-_\alpha(t)-\lambda)(\alpha+t)}\Ai(g^-_\alpha(t)-\lambda+(\alpha+t)^2)\big)\right]^2\\
    &\hspace{1.4in}\leq c\ts e^{2\lambda(t+\alpha)-(\frac43-2\kappa)(\alpha+t)^3-\frac43((1-\kappa)(\alpha+t)^2+4\alpha\kappa t+c+|\lambda|)^{3/2}+\mathcal{O}((\alpha+t)^2)}\\
    &\hspace{1.4in}\leq c\ts e^{2\lambda(t+\alpha)-\frac43|\lambda|^{3/2}}e^{-(\frac43-2\kappa)(\alpha+t)^3-\frac43(1-\kappa)^{3/2}(\alpha+t)^3+\mathcal{O}((\alpha+t)^2)}.
  \end{align}
  As before, since $-\frac43+2\kappa-\frac43(\lambda+(1-\kappa))^{3/2}<0$ for all $\kappa\in(0,\frac34)$,
  the last expression is integrable in $(\lambda,t)\in(-\infty,0]\times[0,\infty)$, and thus the second
  double integral in \eqref{eq:C-S-pp} is finite.  This finishes our proof that $L_1L_2$
  is finite. As we mentioned, the other two operators in \eqref{eq:KOmInfty} can be dealt
  with similarly.
\end{proof}

\begin{proof}[Proof of Theorem \ref{thm:limL}]
  As we mentioned, the proof of this result is essentially contained in the last proof. The idea is to rewrite $\overline{\scat}^{g}_L-\K$  in a similar way as \eqref{eq:KOmInfty}. In fact, the only difference is that the $t$ integrals appearing in the $\Phi^g_\alpha$ operators in \eqref{eq:KOmInfty} (see e.g. \eqref{eq:C-S-pp}) are now computed on $[0,\alpha+L]$ or $[0,L-\alpha]$ instead of $[0,\infty)$. Note however that the first term does not depend on $L$, just as the first term in \eqref{eq:KOmInfty}.
  This leaves us with estimating the difference between each of the last three terms in \eqref{eq:KOmInfty} with their corresponding ones in $\overline{\scat}^{g}_L$.
  Let us focus on the second of these three differences, which corresponds to the third term in \eqref{eq:KOmInfty}.
  It is given by $(A\bar P_0E_1)(E_2\bar P_0A^*)$ with
  \begin{align}
    E_1(x,y)&=\int_{-\infty}^0\!d\lambda\,e^{\alpha \Delta}A(x,\lambda)
    \left[\int_{L-\alpha}^\infty dt\,\pp_y(\tau_{g_\alpha^-}\in
      dt)e^{(\alpha-t)\Delta}A^*(\lambda,g_\alpha^-(t))\right]e^{-\beta x}\uno{y\leq g(\alpha)},\\
    E_2(y,x')&=\uno{y\leq g(\alpha)}e^{\beta x}\tsm\int_{-\infty}^0\!d\lambda\,(e^{\alpha\Delta}A^*)(\lambda,y)A^*(\lambda,x').
  \end{align}
  $E_2$ is the same as in the proof of Proposition \ref{prop:OmegaInfty-TrCl}, so the factor $E_2\bar P_0A^*$ is Hilbert-Schimdt (and it does not depend on $L$). Thus it sufices to prove that $\|E_1\|_2\longrightarrow0$ as $L\to\infty$.
  But following the exact same argument of the last proof we deduce (see \eqref{eq:C-S-pp}) that in this case
  \begin{multline}
    \|A\bar P_0E_1\|_2^2
    \leq\int_{-\infty}^{g(a)}dx\int_{L-\alpha}^\infty dt\,\pp_x(\tau_{g_\alpha^-}\leq t)^2e^{-2\beta x}e^{-2\beta't^2}\\
    \times\int_0^\infty d\lambda\int_{L-\alpha}^\infty
    dt\,\Big[e^{\beta't^2}\p_t\big(e^{-\frac23(t-\alpha)^3-(\lambda+g^-_\alpha(t))(t-\alpha))}\Ai(\lambda+g^-_\alpha(t)+(t-\alpha)^2)\big)\Big]^2.
  \end{multline}
  In the last proof we showed that both of these integrals are finite when the lower limit of integration in $t$ is replaced by 0, and hence they both go to 0 as $L\to\infty$. 
  This proves that $\overline{\scat}^{g}_L-\K \longrightarrow A\bar P_0\scat^{g}\bar P_0A^*-\K$.
  Since $\K$ is trace class, this finishes our proof.
\end{proof}

We finish this section with the proof that when $g\equiv r$,
\begin{equation}\label{eq:flatPsi}
  \Psi^{g^\pm_{\alpha}}_{\pm\alpha}=e^{\mp\alpha\xi}A^*-e^{\mp\alpha\xi}A^*\varrho_r,
\end{equation}
which was used in Example \ref{ex:GOEfromGral} (with $\alpha=0$) to show that in this case $\det\!\big(I-A\bar P_0\scat^{g}\bar P_0A^*)$ leads to the Tracy-Widom GOE distribution and in Example \ref{ex:2to1fromGral} to obtain similarly $F_{{\rm curved}\to{\rm flat}}$.
The key to the proof of \eqref{eq:flatPsi} is the following interesting identity:

\begin{prop}
Fix $y<0$ and let $\tau_0$ be the hitting time of the origin by a Brownian motion with diffusion coefficient 2 which starts at $y$.
Then for any $\alpha,\beta,\lambda\in\rr$,
\begin{equation}\label{eq:weirdId}
  \int_0^{\infty}\pp_y(\tau_0\in dt)e^{(\beta-t)\Delta}e^{\alpha\xi}A(0,\lambda)=e^{\beta\Delta}e^{\alpha\xi}A(-y,\lambda),
\end{equation}
or, explicitly, for any $x\in\rr$,
\begin{multline}\label{eq:weirdId2}
	\int_0^\infty dt\,\frac{|y|}{\sqrt{4\pi}t^{3/2}}e^{-y^2/4t}e^{2(\beta-t)^3/3+2\alpha(\beta-t)^2+\alpha^2(\beta-t)+(\beta-t)x}\Ai(x+(\beta-t)^2+2\alpha(\beta-t))\\
	=e^{2\beta^3/3+2\alpha\beta^2+\alpha^2\beta+\beta x-(\alpha+\beta)y}\Ai(x-y+\beta^2+2\alpha\beta).
\end{multline}
In particular, for all $y<0$ and $x\in\rr$,
\[\int_0^\infty dt\,\frac{|y|}{\sqrt{4\pi}t^{3/2}}e^{-y^2/4t}e^{-2t^3/3-tx}\Ai(x+t^2)=\Ai(x-y)\]
\end{prop}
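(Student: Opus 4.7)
The plan is to interpret \eqref{eq:weirdId} as a Dirichlet representation for a backward-in-time heat equation and prove it by Itô's formula together with optional stopping. Define
\[ F(s,x)=\bigl[e^{(\beta-s)\Delta}e^{\alpha\xi}A(\,\cdot\,,\lambda)\bigr](x), \]
which by Proposition \ref{prop:mLaplacian} is a well-defined smooth function on $\mathbb{R}^2$ with the explicit form
\[ F(s,x)=e^{2(\beta-s)^3/3+2\alpha(\beta-s)^2+\alpha^2(\beta-s)+x(\beta-s+\alpha)-\lambda(\beta-s)}\Ai\bigl(x-\lambda+(\beta-s)^2+2\alpha(\beta-s)\bigr). \]
In this notation \eqref{eq:weirdId} reads exactly $F(0,-y)=\int_0^\infty\pp_y(\tau_0\in dt)\,F(t,0)$, and \eqref{eq:weirdId2} is the same identity after substituting $x=-\lambda$ and unpacking both sides through the explicit formula; the final ``in particular'' statement is then the special case $\alpha=\beta=0$.

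Let $W$ be a Brownian motion with diffusion coefficient $2$ started at $-y>0$, and let $\tau$ denote its first hitting time of $0$; by spatial reflection its law agrees with that of $\tau_0$ under $\pp_y$, and thus it has density $\frac{|y|}{\sqrt{4\pi}\ts t^{3/2}}e^{-y^2/4t}$. The semigroup property $e^{a\Delta}e^{b\Delta}e^{\alpha\xi}A=e^{(a+b)\Delta}e^{\alpha\xi}A$ of Proposition \ref{prop:mLaplacian} (or a direct differentiation of the explicit formula) shows that $F$ satisfies the backwards heat equation $\partial_s F+\partial_x^2 F=0$ on all of $\mathbb{R}^2$. Applying Itô's formula, using $d\langle W\rangle_s=2\,ds$,
\[ dF(s,W_s)=(\partial_s F+\partial_x^2 F)(s,W_s)\,ds+\partial_x F(s,W_s)\,dW_s=\partial_x F(s,W_s)\,dW_s, \]
so $F(s,W_s)$ is a continuous local martingale. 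The optional-stopping identity $F(0,-y)=\ee[F(\tau,W_\tau)]=\ee[F(\tau,0)]$ is exactly what we want.

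The main obstacle is justifying the passage to this optional-stopping limit, since $\tau$ has infinite expectation and the path of $W$ is unbounded before hitting $0$. Here the super-exponential decay $\Ai(z)\sim\tfrac{1}{2\sqrt{\pi}}z^{-1/4}e^{-\frac{2}{3}z^{3/2}}$ as $z\to+\infty$ is the key ingredient. For $s\ge 0$ and $x\ge 0$ the Airy argument $x-\lambda+(\beta-s)^2+2\alpha(\beta-s)$ in $F(s,x)$ becomes large positive as soon as $s$ or $x$ is large, and combining the Airy asymptotic with the explicit exponential prefactor yields a bound of the form $|F(s,x)|\le C\ts e^{-c\ts(s^3+x^{3/2})}$ in that region. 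Since $W$ stays positive up to $\tau$ and $\ee[\sup_{s\le n}W_s^p]$ grows only polynomially in $n$, this estimate dominates the family $\{F(\tau\wedge n,W_{\tau\wedge n})\}_{n\ge 1}$ in $L^1$, and we can pass to the limit $n\to\infty$ in the bounded-time identity $\ee[F(\tau\wedge n,W_{\tau\wedge n})]=F(0,-y)$ by dominated convergence. This estimate is the only step requiring genuine analytic work; the rest is a classical Feynman--Kac manipulation.
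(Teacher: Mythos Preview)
Your argument is correct and takes a genuinely different route from the paper. The paper proceeds by an explicit computation: it truncates the $t$-integral at $L$, rewrites $\int_0^L\pp_y(\tau_0\in dt)\,e^{(L-t)\Delta}(0,z)$ as the density of $\{B\text{ hits }0\text{ before }L,\ B_L\in dz\}$, evaluates this via the reflection principle, inserts the contour-integral representation of $\Ai$, and then sends $L\to\infty$ using the asymptotics of the complementary error function. Your approach instead recognizes that $F(s,x)=e^{(\beta-s)\Delta}e^{\alpha\xi}A(x,\lambda)$ solves the backward heat equation, so that $F(s,W_s)$ is a local martingale for a diffusion-$2$ Brownian motion, and the identity becomes $\ee[F(\tau,0)]=F(0,-y)$ via optional stopping. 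This is exactly the ``reflection-principle intuition'' the paper alludes to after the statement but does not carry out; it is more conceptual and makes transparent why the identity should hold for any function in the range of the heat semigroup with the right decay.

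One small point: your final domination argument is slightly overcomplicated. Once you have established that the Airy decay beats the linear exponential prefactor, you have that $F$ is continuous on $[0,\infty)^2$ and tends to zero at infinity there, hence is \emph{bounded} on the closed quadrant. Since $W_{s\wedge\tau}\ge 0$ for all $s$, the stopped process $F(s\wedge\tau,W_{s\wedge\tau})$ is a \emph{bounded} local martingale, hence a true martingale, and dominated convergence as $n\to\infty$ is immediate. The appeal to polynomial growth of $\ee[\sup_{s\le n}W_s^p]$ is unnecessary.
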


The rough intuition behind \eqref{eq:weirdId}/\eqref{eq:weirdId2} is some sort of reflection principle, in which one runs a Brownian motion from $y$ until it hits the origin at some time $t$, and then runs it ``backwards in time'' from the origin for time $t$, leading to evaluation of $e^{\beta\Delta}e^{\alpha\xi}A(\cdot,\lambda)$ at $-y$ (the $\lambda$ in the above identity can be thought of as a fixed parameter).
In fact, and as will be clear from the proof, the identity should be true in greater generality, replacing $\Ai$ by any function $f$ which is in the preimage of the heat kernel and which satisfies suitable decay conditions.
Since we only need the case $f=\Ai$, for which the proof is quite simple, we do not pursue this any further.

\begin{proof}
  We assume for simplicity that $\alpha=\beta=0$, the general case being completely analogous (and requiring only slightly more complicated computations).
  We take $L>0$ and divide the $t$ integration into $(0,L)$ and $[L,\infty)$.
  Using the tail estimates on the Airy function \eqref{eq:airyBd} it is not hard to see that
  \[\int_L^\infty dt\,\frac{|y|}{\sqrt{4\pi}t^{3/2}}e^{-y^2/4t}e^{-2t^3/3-(x-y)t}\Ai(x-y+t^2)
  \xrightarrow[L\to\infty]{}0,\]
  where we are using the explicit representation of our integral.
  Hence we need to show that
  \begin{equation}\label{eq:LtoinftyFlat}
    \int_0^{L}\pp_y(\tau_0\in dt)e^{-t\Delta}A(0,\lambda)\xrightarrow[L\to\infty]{}A(-y,\lambda).
  \end{equation}
  We may rewrite the left hand side as
  \[\int_{-\infty}^\infty dz\int_0^{L}\pp_y(\tau_0\in dt)e^{(L-t)\Delta}(0,z)e^{-L\Delta}A(z,\lambda),\]
  where we have used Fubini.
  Note that the factor $\int_0^{L}\pp_y(\tau_0\in dt)e^{(L-t)\Delta}(0,z)$ is nothing but $\pp_y(B_t\geq0\text{ for some }t\leq L,\,B_L\in dz)/dz$.
  For positive $z$ this is just the transition density of Brownian motion (recall $y<0$), while for $z<0$ it can be computed explicitly by the reflection principle and is given by $(4\pi L)^{-1/2}e^{-(y+z)^2/4L}$.
  Hence the last expression equals, setting $x=-\lambda$
  \begin{multline}
    \int_0^{\infty}dz\tfrac{1}{\sqrt{4\pi L}}e^{-(y-z)^2/4L}e^{-2L^3/3-L(x+z)}\Ai(x+z+L^2)\\
    +\int_{-\infty}^0dz\tfrac{1}{\sqrt{4\pi L}}e^{-(y+z)^2/4L}e^{-2L^3/3-L(x+z)}\Ai(x+z+L^2)
  \end{multline}
  Using the contour integral representation of the Airy function 
  \[\Ai(x)=\frac1{2\pi\I}\int_{\gamma}du\,e^{u^3/3-xu},\]
  where we take the contour $\gamma$ to be composed of the two rays starting the origin with angles $\pm\theta$ for some $\theta\in(\pi/6,\pi/4)$, and Fubini to compute the $z$ integrals, the above equals (here $\delta>0$)
  \begin{multline}
    \frac{1}{2\pi\I}\int_{\gamma}du\,\tfrac12e^{(u+L)^3/3-(x+y)(u+L)}
    {\rm Erfc}(-\tfrac1{2\sqrt{L}}y+\sqrt{L}(u+L))\\
    +\frac{1}{2\pi\I}\int_{\gamma}du\,\tfrac12e^{(u+L)^3/3-(x-y)(u+L)}
    {\rm Erfc}(-\tfrac1{2\sqrt{L}}y-\sqrt{L}(u+L)),
  \end{multline}
  where Erfc is the complementary error function ${\rm Erfc}(z)=2/\sqrt{\pi}\int_{x}^\infty ds\,e^{-s^2}$.
  After changing $u$ to $u-L$ and shifting the contour $L+\gamma$ back to $\gamma$, this becomes
   \begin{multline}
    \frac{1}{2\pi\I}\int_{\gamma}du\,\tfrac12e^{u^3/3-(x-y)u+(x-y)\alpha}
    {\rm Erfc}(-\tfrac1{2\sqrt{L}}y-\sqrt{L}u)\\
    +\frac{1}{2\pi\I}\int_{\gamma}du\,\tfrac12e^{u^3/3-(x+y)u+(x+y)\alpha}
    {\rm Erfc}(-\tfrac1{2\sqrt{L}}y+\sqrt{L}u).
  \end{multline}
  As can be seen from the decay of the factor $e^{u^3/3}$ on the given contour, the $L\to\infty$ limit can be taken inside both integrals, and then by our choice of $\gamma$, and since ${\rm Erfc}(z)$ goes to 0 as $z\to\infty$ inside the sector $|\!\arg(z)|<\pi/4-\delta$ and to 2 as $z\to\infty$ inside the sector $3\pi/4+\delta<\arg(z)<5\pi/4-\delta$ (see \cite[(7.2.4)]{NIST:DLMF}), we see that the above expression converges as $L\to\infty$ to
  $\frac{1}{2\pi\I}\int_{\delta+\I\rr}du\,e^{u^3/3-(x-y)u}=\Ai(x-y)$
  as needed.
\end{proof}

Now we can prove \eqref{eq:flatPsi}, which we do only in the case involving $g^+$ (the other being completely analogous).
Recall that we are considering $g\equiv r$.
The probability $\pp_y(\tau_{g^+_\alpha}\in dt)$ in the integral appearing in the definition of $\Psi^{g^+_\alpha}_\alpha$ equals $\pp_{y-r}(\tau_0\in dt)$ with $\tau_0$ as in the proposition.
Hence, since $A^*e^{-t\Delta}e^{-\alpha\xi}A^*(\lambda,r)=e^{-t\Delta}e^{-\alpha\xi}A(0,\lambda-r)$, the integral equals
\[\int_0^\infty\pp_{y-r}(\tau_0\in dt)e^{-t\Delta}e^{-\alpha\xi}A(0,\lambda-r)=e^{-\alpha\xi}A(r-y,\lambda-r)=e^{-\alpha\xi}A^*\varrho_r(\lambda,y).\]
This means that $\Psi^{g^+}_\alpha(\lambda,y)=e^{-\alpha\xi}A^*(\lambda,y)-e^{-\alpha\xi}A^*\varrho_r(\lambda,y)$, as desired.

\appendix 

\section{Baik-Rains beats Tracy-Widom}

\subsection{Proof of the inequality}\label{sec:BRbeatsTW}

The purpose of this section is to prove the following

\begin{thm}\label{thm:BRTW}
  The Baik-Rains distribution with $\gamma=0$ dominates stochastically the Tracy-Widom GOE distribution:
  \[F_{\rm stat}^0(r)<F_{\rm GOE}(r)\]
  for all $r\in\rr$.
\end{thm}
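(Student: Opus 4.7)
The plan is to express both distributions in terms of the Hastings--McLeod Painlev\'e II transcendent $q$ (the unique solution of $q'' = rq + 2q^3$ satisfying $q(r)\sim\mathrm{Ai}(r)$ as $r\to+\infty$, and known to be strictly positive on $\mathbb{R}$) and then reduce the theorem to a single scalar inequality. Setting $u(r)=\int_r^\infty q(s)\,ds$ and $v(r)=\int_r^\infty (s-r)\,q(s)^2\,ds$, the Tracy--Widom formulas give $F_{\rm GUE}(r)=e^{-v(r)}$ and $F_{\rm GOE}(r)=e^{-v(r)/2-u(r)/2}$, while the Baik--Rains formula writes $F_{\rm stat}^0(r)=F_{\rm GUE}(r)\,\Psi(r)$ for an explicit function $\Psi$ built from $q(r)$, $q'(r)$, $u(r)$ and $v(r)$. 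Note that $u(r)>0$ and $v(r)\ge0$ for every $r$.

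The useful observation is that in these coordinates, both the theorem and its companion \eqref{eq:extraOrd} collapse onto a single clean inequality. After taking logs, \eqref{eq:extraOrd} is equivalent to
\[
\Psi(r) < e^{-2u(r)},
\]
whereas the squared form of the theorem is $\Psi(r)^2<e^{v(r)-u(r)}$. Since $v(r)+3u(r)\ge 0$ always holds, the first inequality together with $\Psi>0$ gives $\Psi^2<e^{-4u}\le e^{v-u}$, so proving $\Psi(r)<e^{-2u(r)}$ yields both statements simultaneously. This is what I would focus on.

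For the boundary regimes I would use the tail behaviour summarised in the table above: as $r\to+\infty$ one has $u(r)$ and $\Psi(r)-1$ both vanishing at the Airy rate, and the first-order expansion in $u$ makes the strict dominance visible; as $r\to-\infty$ the Hastings--McLeod asymptotics $q(r)\sim\sqrt{-r/2}$ force $u(r)$ to grow like $|r|^{3/2}$, so $e^{-2u(r)}$ decays faster than any power, while $\Psi(r)=F_{\rm stat}^0(r)/F_{\rm GUE}(r)$ inherits the $e^{-|r|^3/12}$ factors from numerator and denominator, which cancel to leading order, leaving a subexponential prefactor that is dominated by $e^{-2u(r)}$.

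To rule out an interior crossing of $D(r):=e^{-2u(r)}-\Psi(r)$, I would differentiate. Using $u'=-q$ one has $(e^{-2u})'=2q\,e^{-2u}$, and applying the Painlev\'e equation to simplify $\Psi'$ should yield a first-order linear relation $D'(r)=2q(r)D(r)+\Phi(r)$ in which, after eliminating $q''$ in favour of $q$ and $q^3$, the inhomogeneous term $\Phi(r)$ has a definite sign. Multiplying by the integrating factor $e^{-2u(r)}$ and integrating from $r$ to $+\infty$, combined with $D(+\infty)=0$, would then represent $D(r)$ as an integral of a signed quantity and close the argument. The main obstacle is precisely this last step: the Baik--Rains $\Psi$ involves $q,q',u,v$ in a fairly intricate combination, and one needs to manipulate $\Psi'$ via the Painlev\'e equation into a form where the sign of $\Phi$ is manifest; once that reduction is achieved, the positivity of $q$ carries everything through, and the stronger inequality $\Psi<e^{-2u}$ follows, implying the theorem.
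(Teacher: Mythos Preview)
Your setup via Painlev\'e II is exactly the right framework, and the paper does the same.  However, the reduction you propose is fatally miscomputed: the inequality $\Psi(r)<e^{-2u(r)}$ is in fact \emph{false}, and this collapses the whole strategy.  To see this, redo the log computation for \eqref{eq:extraOrd}: with $\log F_{\rm GOE}=-\tfrac12 v-\tfrac12 u$, $\log F_{\rm GUE}=-v$ and $\log F_{\rm stat}^0=-v+\log\Psi$, the inequality $F_{\rm GOE}^4<F_{\rm stat}^0 F_{\rm GUE}$ becomes $-2v-2u<-2v+\log\Psi$, i.e.\ $\Psi>e^{-2u}$, not $\Psi<e^{-2u}$.  (Equivalently, in the paper's variables $\Psi=(1-yv)E^4$ with $y>0$ and $v<0$, so $1-yv>1$ and $\Psi>E^4=e^{-2u}$.)  Hence the ``single clean inequality'' you intend to prove is the negation of \eqref{eq:extraOrd}, and cannot imply the theorem.

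The paper's route is close in spirit to your ODE sketch but targets a different quantity.  Writing $\Psi=(1-yv)E^4$ with $y=x+2q'+2q^2$ (their $u$ is your $-q$, their $v$ is $-\!\int_x^\infty q^2$), the theorem reduces to $(1-yv)E^3F<1$; since $E,F\in(0,1)$ this follows from the stronger $(1-yv)E\le 1$ (in your coordinates, $\Psi\le e^{-3u/2}$).  They then prove a key lemma that $y>0$ and $y'>0$ everywhere, via the first-order relation $y'=1+2u_{\rm paper}y$ and a Gr\"onwall argument.  With these signs in hand, a direct differentiation gives $\big((1-yv)E\big)'=-y'\big(v+\tfrac12 u_{\rm paper}\big)E+\tfrac12 u_{\rm paper}yvE>0$, and since $(1-yv)E\to 1$ as $x\to\infty$ the bound follows.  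So the monotonicity-plus-limit idea you had is right; the object to which it must be applied is $(1-yv)E$, not $e^{-2u}-\Psi$.
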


The proof is based on the representations of $F_{\rm GOE}$ and $F_{\rm stat}^0$ in terms Painlev\'e transcendents.
We follow here the presentation in \cite{baikRainsF0}.

Let $u(x)$ be the Hastings-McLeod solution to the Painlev\'e II equation
\begin{equation}
  u_{xx}=2u^3+xu,\label{eq:PII}
\end{equation}
singled out as the unique solution satisfying the boundary condition
\begin{equation}\label{eq:bdHMcL}
  u(x)\sim -\Ai(x)\quad\text{as}\quad x\to\infty.
\end{equation}
The existence and uniqueness of this solution were established in \cite{hastingsMcLeod}, where it is also shown that
\begin{equation}\label{eq:uProps}
  u(x)<0\qqand u'(x)>0\qquad\text{ for all }x\in\rr.
\end{equation}
The asymptotics as $x\to -\infty$ are given by
\begin{equation}\label{eq:asHM}
  u(x) = -\sqrt{\frac{-x}{2}}\biggl( 1+\mathcal{O}\bigl(\tfrac1{x^2}\bigr)\biggr),
\qquad\text{as $x\to -\infty$,}
\end{equation}
see e.g. \cite{hastingsMcLeod,deiftZhouPainleveII}.
Define
\begin{equation}\label{eq:def-v}
  v(x)=-\int_x^{\infty} (u(s))^2 ds=u(x)^4+xu(x)^2-(u'(x))^2,
\end{equation}
so that $v'(x)= (u(x))^2$ (the second equality in \eqref{eq:def-v} is \cite[2.6]{baikRainsF0}, and is a consequence of \eqref{eq:PII}).
Also define
\begin{equation}\label{eq:def-y}
  y(x)=x+2u'(x)+2u(x)^2,
\end{equation}
together with
\begin{equation}
F(x) = e^{\frac12\int_x^{\infty}ds\ts v(s)}
= e^{-\frac12\int_x^{\infty}ds\ts (s-x)(u(s))^2} \qqand
E(x) = e^{\frac12\int_x^{\infty}ds\ts u(s)}.
\end{equation}

The Tracy-Widom GUE and GOE distributions and the Baik-Rains distribution (with $\kappa=0$) can be expressed in terms of $u$ and $v$ as follows:
\begin{align}  
\label{eq:GUE-Painl}
   F_{\rm GUE}(x) &= F(x)^2,\\
\label{eq:GOE-Painl}
   F_{\rm GOE}(x) &= F(x)E(x),\\
\label{eq:BR-Painl}
   F_{\rm stat}^0(x) &= [1-y(x)v(x)]E(x)^4F_{\rm GUE}(x)
\end{align}
(the first two were derived by Tracy and Widom in \cite{tracyWidom,tracyWidom2} while the last one amounts to the original definition of $F_{\rm stat}^0$, as given in \cite{baikRainsF0}).
In view of these expressions, the proof of Theorem \ref{thm:BRTW} amounts to showing that
\begin{equation}\label{eq:BRineq}
  [1-y(x)v(x)]E(x)^4F(x)^2<E(x)F(x).
\end{equation}
Since both $F(x)$ and $E(x)$ are in $(0,1)$ for all $x$, this follows from the following stronger inequality:
\begin{equation}\label{eq:BRbTWstr}
  (1-y(x)v(x))E(x)\leq1.
\end{equation}

The main step of the proof of this inequality consists in showing that $y$ is increasing.

\begin{lem}\label{lem:yIncr}
$y(x)>0$ and $y'(x)>0$ for all $x$.
\end{lem}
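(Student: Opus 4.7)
The key observation I would exploit is that \eqref{eq:PII} and the definition \eqref{eq:def-y} combine into a remarkably clean first-order equation for $y$: differentiating $y=x+2u'+2u^{2}$ and using $u''=2u^{3}+xu$,
\[
y'(x)=1+2u''(x)+4u(x)u'(x)=1+2u(x)\bigl(x+2u'(x)+2u(x)^{2}\bigr)=1+2u(x)\,y(x).
\]
Everything else will follow from this single identity, together with the Hastings--McLeod asymptotics \eqref{eq:bdHMcL}--\eqref{eq:asHM} and the sign information in \eqref{eq:uProps}.

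The first step is to pin down the boundary behaviour of $y$ at $-\infty$. From \eqref{eq:asHM} one has $u(x)=-\sqrt{-x/2}\bigl(1+\mathcal{O}(1/x^{2})\bigr)$, hence $2u(x)^{2}=-x+\mathcal{O}(1/x)$, and differentiating the expansion gives $2u'(x)\sim(-2x)^{-1/2}$, so that
\[
y(x)=\bigl(x+2u(x)^{2}\bigr)+2u'(x)\ \sim\ \frac{1}{\sqrt{-2x}}\ \longrightarrow\ 0^{+}\qquad\text{as }x\to-\infty;
\]
at $+\infty$, by \eqref{eq:bdHMcL} both $u$ and $u'$ decay super-exponentially, so $y(x)\sim x\to+\infty$. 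Positivity of $y$ is then a one-line contradiction argument: if $x_{0}:=\inf\{x:y(x)\le 0\}$ were finite, continuity would force $y(x_{0})=0$ and the ODE would then give $y'(x_{0})=1+2u(x_{0})\cdot 0=1>0$, which is incompatible with $y\ge 0$ on $(-\infty,x_{0}]$.

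For the strict monotonicity $y'>0$ I would differentiate the identity once more to get $y''=2u'y+2uy'$, so at any critical point $x_{1}$ of $y$ one has $y''(x_{1})=2u'(x_{1})\,y(x_{1})>0$ by \eqref{eq:uProps} and the positivity just established. Hence every critical point of $y$ is a strict local minimum, which means $y$ has at most one critical point, because between two strict local minima $y$ would have to attain an interior local maximum whose second derivative would be nonpositive. A unique critical point $x_{1}$ would give $y'<0$ on $(-\infty,x_{1})$, and combined with $y(x)\to 0^{+}$ at $-\infty$ this would force $y(x_{1})\le 0$, contradicting $y>0$. Hence $y'$ never vanishes, and since $y(x)\to+\infty$ at $+\infty$ its sign must be positive throughout.

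The only delicate step is the asymptotic $y(x)\to 0^{+}$ at $-\infty$, which depends on the cancellation $x+2u^{2}=\mathcal{O}(1/x)$ between the first and third terms of \eqref{eq:def-y}; this is the one place where a subleading Hastings--McLeod expansion is needed. The rest of the argument is elementary and qualitative, driven entirely by the first-order ODE $y'=1+2uy$ and the sign constraints on $u$ and $u'$.
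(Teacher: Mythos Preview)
Your proof is correct and takes a genuinely different, more elementary route than the paper's.

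Both arguments begin from the same first-order identity $y'=1+2uy$ and both need the asymptotic $y(x)\to 0^{+}$ as $x\to-\infty$ (the paper cites this as \cite[(2.19)]{baikRainsF0}, while you extract it from the subleading Hastings--McLeod expansion; you rightly flag this as the one delicate input). From there the strategies diverge. The paper proves $y'>0$ first: it differentiates to $y''=2u'y+2uy'$, uses $y\ge 0$ on an initial interval to get a Gronwall-type inequality $y'(x)\ge 2y'(a)\exp\!\int_{a}^{x}u$, and takes $a\to-\infty$ (using $y'(a)\to 0$) to force $y'\ge 0$, then upgrades to strict positivity and bootstraps to all of $\rr$. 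Your argument instead proves $y>0$ first by the clean observation that at a first zero $x_{0}$ the ODE gives $y'(x_{0})=1$, immediately contradicting $y\ge 0$ to the left; then you get $y'>0$ from the second-derivative test $y''=2u'y>0$ at any critical point, so every critical point would be a strict local minimum, and combining ``at most one critical point'' with $y\to 0^{+}$ at $-\infty$ rules out any critical point at all. Your approach avoids Gronwall entirely and is arguably tidier; the paper's route, on the other hand, produces the quantitative lower bound on $y'$ as a byproduct, though it is not used elsewhere.
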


\begin{proof}
  We note first that $y$ satisfies
 \begin{subequations}
    \begin{align}
       y'(x)&=1+2u(x)y(x),\label{eq:yProps1}\\
       y(x)&=1/\sqrt{-2x}\big(1+\mathcal{O}(1/x^2)\big)~~\text{ as $x\to-\infty$}\label{eq:yProps2}
   \end{align}
 \end{subequations}
  (this is \cite[(2.19)]{baikRainsF0}, but it follows easily from the definition of $y$ and the asymptotics \eqref{eq:asHM}).
  
  Define $b=\sup\{x_0\in\rr\!:\,y(x)\geq0~\forall\,x\leq x_0\}$.
  Note that $b>- \infty$ thanks to \eqref{eq:yProps2}.
  We claim that in fact $b= \infty$.
  To this end we differentiate \eqref{eq:yProps1} to obtain
  \[y''(x)=2u'(x)y(x)+2u(x)y'(x).\]
  Since $y(x)\geq0$ for all $x\leq b$ and $u$ is increasing by \eqref{eq:uProps}, and letting $z(x)=-y'(x)$, this identity gives $z'(x)\leq2u(x)z(x)$ for all $x\in[a,b]$, where $a<b$ is arbitrary.
  Now using Gronwall's inequality we get $z(x)\leq2z(a)\exp(\int_a^x dt\,u(t))$ or, in other words,
  \begin{equation} \label{eq:gronw} 
    y'(x)\geq2y'(a)e^{\int_a^x dt\,u(t)},
  \end{equation}
  for all $x\in[a,b]$.
  On the other hand, using \eqref{eq:asHM}, \eqref{eq:def-y} and \eqref{eq:yProps2} to solve for the asymptotics for $u'$ leads to
  \begin{equation}\label{eq:as-uprime}
     u'(x)=\tfrac1{2\sqrt{-2x}}+\mathcal{O}\big(\tfrac1{\sqrt{-x}}\big)\qquad\text{as }x\to- \infty,
   \end{equation}
  and thus using \eqref{eq:def-y} in \eqref{eq:yProps1} together \eqref{eq:as-uprime} and  \eqref{eq:asHM} again we deduce that $y'(x) \longrightarrow0$  as $x\to- \infty$.
  Therefore, taking $a\to-\infty$ in \eqref{eq:gronw} we deduce that $y'(x)\geq0$ for all $x\leq b$ (note that $u<0$).
  Since $u$ is not a linear function (on any subinterval of $(- \infty,b]$) and is strictly increasing, \eqref{eq:def-y} implies that $y$ is not constant on any subinterval of $(- \infty,b]$, and thus we can choose an $a_0<b$ with arbitrarily large absolute value such that $y'(a_0)>0$.
  As a consequence, \eqref{eq:gronw} again gives 
  \begin{equation}\label{eq:yPrimePos}
    y'(x)>0\qquad\text{ for all }x\leq b.
  \end{equation}
  Now suppose $b < \infty$.
  By continuity we have $y(b)=0$, but the fact that $y'(b)>0$ implies that there is a $b_1>b$ such that $y(x)>0$ on $(b,b_1)$, and this contradicts the choice of $b$.

  As a consequence, we have $y(x)>0$ for all $x$, which is the first statement of the lemma.
  But then the same argument leads to \eqref{eq:yPrimePos} being valid for all $x\in\rr$, which completes the proof.
\end{proof}

\begin{proof}[Proof of \eqref{eq:BRbTWstr}]
  Since $v'(x)=u(x)^2$, we have
   \begin{align}
    \Big([1-yv]E\Big)'&=-[y'v+yv'+\tfrac12u-\tfrac12uyv]E= -[y'v+yu^2+\tfrac12u-\tfrac12uyv]E\\
    &=-[y'v+(\tfrac12+yu)u]E+\tfrac12uyvE=-y'(v+\tfrac12u)E+\tfrac12uyvE>0,
  \end{align} 
  where we have used Lemma \ref{lem:yIncr} and the fact that $u$ and $v$ are strictly negative and $E$ is strictly positive.
  On the other hand, by \eqref{eq:airyBd} and \eqref{eq:def-v} we have $|v(x)|\sim C\ts e^{-\frac43x^{3/2}}$ for large enough $x$, and thus it is clear from \eqref{eq:bdHMcL}, \eqref{eq:def-y} and \eqref{eq:airyBd} again that $y(x)v(x) \longrightarrow0$ as $x\to \infty$.
  This implies that $[1-y(x)v(x)]E(x)\longrightarrow1$ as $x\to \infty$ which, together with the fact that $[1-y(x)v(x)]E(x)$ is increasing, yields the result.
\end{proof}

\subsection{An inequality for last passage percolation with boundary conditions}

Observe that Lemma \ref{lem:yIncr} yields $1-y(x)v(x)>1$, so $F^0_{\rm stat}(x)>E(x)^4F_{\rm GUE}(x)$ by \eqref{eq:BR-Painl}.
But then using \eqref{eq:GUE-Painl} and \eqref{eq:GOE-Painl} in this inequality yields
\begin{equation}\label{eq:extraOrd-App}
  F_{\rm GOE}(r)^4<F_{\rm stat}^0(r)F_{\rm GUE}(r),
\end{equation}
for all $r$, which we had already stated in the introduction as \eqref{eq:extraOrd}.

This inequality can be understood as a statement about a certain \emph{last passage percolation model}.
Consider a family $\big\{w(i,j)\}_{i,j\in\nn}$ of \emph{weights}, defined as independent geometric random variables with parameters $q_{i,j}$ (i.e. $\pp(w(i,j)=k)=(1-q_{i,j})q_{i,j}^{k}$ for $k\geq0$) given as
\begin{equation}\label{eq:lppParams}
  q_{i,j}=%
  \begin{dcases*}
    q & if $i,j>1$\\
    \alpha_+\sqrt{q} & if $i>1$, $j=0$\\
    \alpha_-\sqrt{q} & if $i=0$, $j>1$\\
    0 & if $i=j=0$
  \end{dcases*}
\end{equation}
for some constants $q,\alpha_,\alpha_+\in[0,1]$ with $q>0$.
The \emph{point-to-point last passage time} is defined, for $N\in\nn$, as
\[L_{\alpha_-,\alpha_+}(N)=\max_{\pi:(0,0)\to(N,N)}\sum_{i=0}^{2N}w(\pi_i)\]
(we omit $q$ from the notation for simplicity) where the maximum is taken over all up-right paths (i.e. such that $\pi_i-\pi_{i-1}\in\{(1,0),(0,1)\}$ connecting the origin to $(N,N)$.
\citet{baikRainsF0} (see Section 4) showed that there are explicit constants $c_1$ and $c_2$, depending only on $q$, such that 
\begin{multline}\label{eq:johBR}
  \lim_{N\to \infty}\pp\bigg(\frac{L_{\alpha_-,\alpha_+}(N)-c_1N}{c_2N^{1/3}}\leq r\bigg)\\
  =\begin{dcases*}
     F_{\rm GUE}(r) & if $0\leq\alpha_-,\alpha_+<1$,\\
     F_{\rm GOE}(r)^2 & if $0\leq\alpha_-<1$ and $\alpha_+=1$ or $0\leq\alpha_+<1$ and $\alpha_-=1$,\\
     F^0_{\rm stat}(r) & if $\alpha_-=\alpha_+=1$.
   \end{dcases*}
\end{multline}

Now consider two independent copies of the model with the same $q$ and denote the last passage times associated to them as $L^1_{\alpha^1_-,\alpha^1_+}(N)$ and $L^2_{\alpha^2_-,\alpha^2_+}(N)$.
Then it is reasonable to guess that
\begin{equation}\label{eq:lppOrd}
  \max\{L^1_{1,0}(N),L^2_{0,1}(N)\}\succ\max\{L^1_{0,0}(N),L^2_{1,1}(N)\}
\end{equation}
for all $N$, where $\succ$ denotes stochastic domination, since on the left hand side both last passage times get to use boundary weights in their choice of optimal paths. while on the right hand side the first last passage time has no available boundary weights, while the second one has to choose whether to use the ones on the vertical or on the horizontal axes (if any).
In view of \eqref{eq:johBR}, \eqref{eq:extraOrd-App} is just the limiting version of this inequality.
It would be interesting to turn this intuition into a proof of \eqref{eq:lppOrd}, or else to show that the inequality is false (in which case only \eqref{eq:extraOrd-App} only becomes true in the limit).

\vspace{14pt}

\noindent{\bf Acknowledgements.}
JQ gratefully acknowledges financial support from the Natural Sciences and Engineering
Research Council of Canada, the I.~W. Killam Foundation, and the Institute for Advanced
Study. DR was partially supported by Fondecyt Grant 1160174, by Conicyt Basal-CMM, and by
Programa Iniciativa Cient\'ifica Milenio grant number NC130062 through Nucleus Millenium
Stochastic Models of Complex and Disordered Systems; he also thanks the Institute for Advanced Study for its hospitality during a visit at which this project got started.

\printbibliography[heading=apa]

\end{document}